\newcommand{\ga}{\gamma}
\newcommand{\e}{\varepsilon}
\newcommand{\la}{\lambda}
\newtheorem{theorem}{Theorem}[section]
\newtheorem{lemma}[theorem]{Lemma}
\newtheorem{remark}[theorem]{Remark}
\newtheorem{proposition}[theorem]{Proposition}
\newtheorem{corollary}[theorem]{Corollary}
\numberwithin{equation}{section}
\begin{document}
\title{
Infinite concentration and oscillation estimates for supercritical semilinear elliptic equations in discs. II
}
\author{Daisuke Naimen
}
\date{\small{Muroran Institute of Technology, 27-1, Mizumoto-cho, Muroran-shi, Hokkaido, 0508585, Japan}}
\maketitle
\begin{abstract} This paper is the latter part of our series concerning infinite concentration and oscillation phenomena on  supercritical semilinear elliptic equations in discs. Our supercritical setting admits two types of  nonlinearities, the Trudinger-Moser type $e^{u^p}$ with $p>2$ and the multiple exponential one $\exp{(\cdots(\exp{(u^m)}))}$ with $m>0$. In the first part, we accomplished the analysis of infinite concentration phenomena on any blow-up solutions. In this second part, we proceed to the study of infinite oscillation phenomena   based on concentration estimates obtained in the first part.  As a result, 
we provide a precise description of the asymptotic shapes of the graphs of blow-up solutions near the origin. Two types of oscillation and intersection properties are observed here depending on the choice of the growth. Moreover, it allows us to show several oscillation behaviors  around singular solutions with a suitable asymptotic behavior. This leads to  a natural sufficient condition for infinite oscillations of bifurcation diagrams which ensure the existence of infinitely many solutions. We successfully apply this condition to certain classes of nonlinearities including  the two types mentioned above. 
\end{abstract}
\tableofcontents
\section{Introduction}\label{sec:intr}
This paper is the second part of our series  on the infinite concentration and oscillation analysis of supercritical semilinear elliptic equations in discs. As in the first part \cite{N3}, we begin with considering the next problem,
\begin{equation}\label{p0}
\begin{cases}
-\Delta u=\la f(x,u), \ u>0&\text{ in }\Omega,\\
u=0&\text{ on }\partial \Omega,
\end{cases}
\end{equation}
where $\Omega$ is a bounded domain in $\mathbb{R}^2$ with smooth boundary  $\partial \Omega$, $\la>0$  a parameter, and $f:\bar{\Omega}\times [0,\infty)\to \mathbb{R}$ a nonnegative continuously differentiable function. In our main discussion, we impose the generalized supercritical exponential growth on $f$ mentioned in Subsection \ref{sub:ge} below. Our goal is to provide  infinite oscillation estimates for \eqref{p0} based on our infinite concentration estimates in \cite{N3}.  

The main introduction of our series is given in Section 1 in \cite{N3}. Hence, we here recall only the previous results  on oscillation phenomena on \eqref{p0}.  First, we note that if  $f(x,u)=e^u$, \eqref{p0} is known as the  Gelfand problem \cite{G}. In this case, Joseph-Lundgren \cite{JL} completes the classification of the  bifurcation diagram of the solutions $(\la,u)$  of \eqref{p0}   when $\Omega$ is an $N$-dimensional ball with $N\ge1$. From the result, one sees  that  if $1\le N\le 2$, then the diagram, which  emanates from the origin $(\la,u)=(0,0)$, turns exactly once and goes to the point $(\la,u)=(0,\infty)$ at infinity. On the other hand, one observes that if $3\le N\le 9$, then  it  turns infinitely many times around the axis $\la=\la^*$ for some suitable value $\la^*>0$. In particular, one ensures the existence of infinitely many solutions $u$ of \eqref{p0} for $\la=\la^*$ and many solutions for $\la$ around it. Finally, one finds that if $N\ge10$, then it has no turning point and goes to the point $(\la^*,\infty)$ at infinity for some suitable value $\la^*>0$. Note that in the second case above, we observe an infinite oscillation behavior which is one of our main subjects in this paper. 
 Recently, several authors accomplish suitable extensions of the result above to general supercritical problems in the case $N\ge3$. We refer the reader to  \cite{Mi2}, \cite{KW}, and \cite{MN} and references therein. See also the work on convex domains in \cite{FZ}. We remark that, in these works, the analysis of supercritical problems in dimension two  was left open. 

Concerning the supercritical problem of \eqref{p0} in dimension two, there are two earlier works \cite{AtPe} and \cite{McMc}  in discs for the Trudinger-Moser type growth $f(x,u)\sim e^{u^p}$  with $p>2$. An interesting supercritical phenomenon is suggested in \cite{McMc} as ``the  bouncing process" via some  heuristic and qualitative arguments.  We shall clarify this observation from the microscopic and more general  view points.  

Very recently, after decades from the work \cite{McMc}, new developments  are established  by \cite{N1}, \cite{N2}, and \cite{Ku} in discs. In \cite{N2}, based on the precise infinite concentration estimates in \cite{N1}, the author shows that blow-up solutions oscillate around  singular solutions infinitely many times for certain classes of the Trudinger-Moser type  nonlinearities. Then, following the idea in \cite{Mi} and with the aid of the singular solutions found in \cite{FIRT} and \cite{GGP}, he proves that the bifurcation diagram of \eqref{p0} oscillates around the axis $\la=\la^*$  infinitely many times for a suitable value $\la^*>0$.  On one hand, \cite{Ku} proves that the bifurcation diagram  has infinitely many turning points under his generalized setting by applying the analytic bifurcation theory.

Here, we remark that the author's work \cite{N2}  succeeds in proving not only the divergence of the number of turning points (obtained in \cite{Ku}) but also the oscillation around a suitable number $\la=\la^*$ which ensures the existence of infinitely many solutions $u$ for $\la=\la^*$. However,  his setting covers only the Trudinger-Moser type  growth   while the generalized settings in the related works \cite{Ku} and  \cite{FIRT} admit also the multiple exponential growth like $f(x,u)\sim \exp{(\cdots(\exp{(u^m)}))}$ with $m>0$. Hence one natural question is that whether we can accomplish an extension of the author's result in \cite{N2} to a generalized growth case including the multiple exponential one. In this paper, we shall give an answer based on  the results in the former part \cite{N3}. 

In  \cite{N3}, we succeed in establishing infinite concentration estimates for supercritical problems of \eqref{p0}  with generalized exponential nonlinearities which admit both of the Trudinger-Moser type and the multiple exponential ones.  Especially, we detect infinite sequences of concentrating parts on any blow-up solutions and accomplish the precise characterization of each concentration via  the limit equation with the suitable energy recurrence formulas. We remark that two types of infinite concentration behaviors are observed there. One is for  the Trudinger-Moser case and the other is for the multiple exponential case. The former one is an exact extension of the result in \cite{N1} while the latter shows a new behavior. See more precise discussions in the subsections below.

Now, our main aim of this paper is to  establish infinite oscillation estimates based on  the concentration estimates obtained in \cite{N3} under the generalized setting. Then we apply it to demonstrate several infinite oscillation behaviors of blow-up solutions around singular solutions. This observation contains new results on the infinite oscillation of the bifurcation diagram which yields the existence of infinitely many solutions of \eqref{p0}. 

More precisely, we first show that the infinite concentration estimate in \cite{N3} turns into a precise description of the asymptotic shapes of the graphs of blow-up solutions near the origin. We here observe two types of infinite oscillation behaviors reflecting the fact that there are two types of infinite concentration behaviors depending on the choice of the growth as mentioned above. Especially, the oscillation behavior in the multiple exponential case is new. Moreover, in both cases, it shows that the infinite sequence of concentrating parts can be observed  as an infinite sequence of bumps on the graphs. Using the precise estimates for the tops and bottoms of the bumps, we prove that any blow-up solutions oscillate  infinitely many times around singular solutions with a suitable asymptotic behavior. Then,  using  the idea of \cite{Mi}, we show that it induces  infinite boundary (or zero point) oscillations of blow-up solutions of the corresponding shooting type problems. This leads to  an interesting  global picture  of our concentration and oscillation phenomena studied in this series. That is, the infinite boundary oscillations generate an infinite number of bumps one after another from the  boundary and then, they climb up the graphs of blow-up solutions from the bottom to the top and finally, due to the supercritical growth, grow up as the infinite sequence of bubbles observed in \cite{N3}. Finally, as a main application of the discussion above, we provide a natural sufficient condition for several infinite oscillation behaviors of blow-up solutions around singular solutions. This result contains the assertion for the infinite oscillation of the bifurcation diagram which implies the existence of infinitely many solutions of \eqref{p0}. We also demonstrate that this condition is  applied for certain classes of nonlinearities,  considered in \cite{FIRT}, including the two types noted above.  This  gives a positive answer to the question noted above. 

A novelty of this paper is that we clarify  infinite oscillation phenomena on blow-up solutions from the view point of the infinite concentration phenomena under our general supercritical  setting. In particular,  we establish an extension of the result in \cite{N2} to wider and new classes of supercritical nonlinearities. This contains a new infinite oscillation behavior in the multiple exponential case. Among others,  to our best knowledge, this paper is the first work which arrives at the Joseph-Lundgren type multiplicity result, noted in the second paragraph above, for a class of multiple exponential nonlinearities in dimension two. Moreover, we believe that our sufficient condition mentioned above is useful to obtain the desired conclusion for wider classes of the two types of nonlinearities. This is  left for our future works. 

 Now, we begin with mentioning the setting and main results in the first part \cite{N3}. After that, we will state our main theorems base on them in the next section. 
\subsection{Radial setting with generalized exponential growth}\label{sub:ge} 
In this paper,  we study  \eqref{p0} with  $\Omega=D$ where $D$ is the unit disc centered at the origin. Moreover, we introduce the next assumptions on the nonlinearity $f$ as in \cite{N3}.  In the following,  we set  $\exp_1{(t)}=\exp{(t)}$ and $\exp_k{(t)}=\exp{(\exp_{k-1}{(t)})}$ for all $k\in \mathbb{N}$ with $k\ge2$ by induction  for all $t\ge0$.  
\begin{enumerate}
\item[(H0)]  $f(x,t)$ has the form $f(x,t)=h(|x|)f(t)$ with continuously differentiable functions $h:[0,1]\to (0,\infty)$ and $f:[0,\infty)\to [0,\infty)$. Moreover, there exists a value $t_0\ge0$ such that $f(t)>0$ for all $t\ge t_0$ and  $f\in C^2([t_0,\infty))$. Finally, we put $g(t)=\log{f(t)}$ for all $t\ge t_0$. 
\item[(H1)] We have the next (i) and (ii). 
\begin{enumerate}
\item[(i)] $g'(t)>0$ and  $g''(t)>0$ for all $t\ge t_0$ and there exists a pair $(q,p)\in \{1\}\times (0,\infty]\cup (1,\infty)\times (0,\infty)$ of values   such that  
\begin{equation}\label{g1}
\lim_{t\to \infty}\frac{g'(t)^2}{g(t)g''(t)}=q
\end{equation}
and 
\begin{equation}\label{g2}
\lim_{n\to \infty}\frac{t g'(t)}{g(t)}=p.
\end{equation}
\item[(ii)] In the case $q=1$, $tg'(t)/g(t)$ is nondecreasing for all $t\ge t_0$ and there exist a number $k\in \mathbb{N}$  and a function $\hat{g}\in C^2([t_0,\infty))$ such that $f(t)=\exp_k(\hat{g}(t))$ and $\hat{g}'(t)/\hat{g}(t)$ is nonincreasing 
for all $t\ge t_0$. 
\end{enumerate}
\item[(H2)] It holds that $\displaystyle\inf_{t>0}\frac{f(t)}{t}>0$. 
\end{enumerate} 
We always suppose (H0) without further comments throughout this paper. Moreover, we focus on radially symmetric classical solutions $u$ of \eqref{p0}.  (Note that  when $h$ is constant, the result in \cite{GNN} shows that any solution of \eqref{p0} is radially symmetric.) Then we may regard  $u=u(|x|)$ ($x\in \bar{D}$) and thus, \eqref{p0} turns into an ordinary differential equation for $u=u(r)$ $(r\in[0,1])$,
\begin{equation}\label{p}
\begin{cases}
- u''-\frac1r u'=\la h f(u), \ u>0\text{\ \  in }(0,1),\\
u'(0)=0=u(1).
\end{cases}
\end{equation}
We remark that we are always considering solutions in $C^2([0,1])$. Moreover, thanks to (H0), we readily see that each solution  $u$ of \eqref{p} with $u(0)>t_0$ is strictly decreasing (see Subsection 1.1 in \cite{N3}). In particular, $u(0)$ attains the maximum on $[0,1]$. 

Next let us mention  (H1) which is the essential condition for our infinite concentration and oscillation estimates. This is  based on the idea of the generalized H\"{o}lder conjugate exponent introduced by Dupaigne-Farina \cite{DF}. As we see by  Lemma \ref{lem:pq} and several examples  below, the pair $(q,p)$ of numbers satisfies  $1/p+1/q=1$ where $1/\infty=0$ and we can regard   $p$ as the growth rate of $g$ at infinity. Indeed, typical examples for the case $q>1$ are given by  functions $f$ with the Trudinger-Moser type growth such that
\[
f(t)=t^me^{t^p+c t^{\bar{p}}},\ \ \ e^{t^p (\log{t})^l}
\]
for all large $t>0$ with  any given numbers  $c,m,l\in \mathbb{R}$, $p>1$, and $\bar{p}>0$ such that  $1/p+1/q=1$ and $\bar{p}<p$. The former one can be considered also in the previous works \cite{N1} and \cite{N2} while the latter one is a new case. On the other hand, a typical example for the case $q=1$ is given by a function $f$ with the multiple exponential growth such like
\[
f(t)=\exp_k{(t^m(\log{t})^l)}
\]
for all large $t>0$ with any given values $k\in \mathbb{N}$ with $k\ge2$, $m>0$, and $l\in \mathbb{R}$. Clearly, this is also a new case which  can not be treated in \cite{N1} and \cite{N2}. 

As we see from the facts and examples above, we are interested in the case $1\le q<2$ which gives the supercritical growth $p>2$. Finally, we note that  (ii) of (H1) seems to be a technical assumption used only for the proof of Lemmas \ref{lem:g2} and \ref{lem:g4} below. In other words, we can accomplish all the proof by only (i) of (H1) with these lemmas. Hence, we may employ the conclusions of those lemmas as our (possibly weaker) assumptions instead of (ii).  

The last condition (H2) is technically assumed  when we want to ensure the upper bound for $\la$. See Lemma \ref{lem:kap} below. Next we proceed to the introduction of  our key energy recurrence formulas.  
\subsection{Energy recurrence formulas and limit profiles}\label{sub:bf}
In the first part \cite{N3}, we complete the precise quantification of the infinite concentration behavior by using  suitable sequences $(a_k)$, $(\delta_k)$, and $(\eta_k)$ of positive numbers. For the definition, choose any values $q\in[1,2)$ and $p\in(1,\infty]$ such that $1/p+1/q=1$ where $1/\infty=0$. Set $a_1=2$, $\delta_1=1$, and $\eta_1=1$ for any $q\in[1,2)$.   

If $q>1$, we define the sequences $(a_k)\subset(0,2]$ and $(\delta_k)\subset (0,1]$ by the formulas 
\begin{equation}\label{b1}
\frac{2p}{2+a_k}\left(1-\frac{\delta_{k+1}}{\delta_k}\right)-1+\left(\frac{\delta_{k+1}}{\delta_k}\right)^p=0 \text{\ \  with \ \  }\delta_{k+1}<\delta_k
\end{equation}
and 
\begin{equation}\label{b2}
a_{k+1}=2-\left(\frac{\delta_{k+1}}{\delta_k}\right)^{p-1}(2+a_k)
\end{equation}
for all $k\in \mathbb{N}$ by induction. We then put $\eta_k=\delta_k^p$  for all $k\in \mathbb{N}$. If $q=1$, we define the sequences $(a_k)\subset (0,2]$ and $(\eta_k)\subset (0,2]$ by 
\begin{equation}\label{b3}
\frac{2}{2+a_k}\log{\frac{\eta_{k}}{\eta_{k+1}}}-1+\frac{\eta_{k+1}}{\eta_{k}}=0 \text{\ \  with \ \ }\eta_{k+1}<\eta_k
\end{equation}
and 
\begin{equation}\label{b4}
a_{k+1}=2-\frac{\eta_{k+1}}{\eta_{k}}(2+a_k)
\end{equation}
for every $k\in \mathbb{N}$ by induction.  Moreover, we set $\delta_k=1$ for all $k\in \mathbb{N}$. Lastly, for any $q\in[1,2)$, we put $\tilde{\eta}_k=\eta_k^{1/q}$.

We remark that $(a_k)$ and $(\delta_k)$ if $q>1$ and $(a_k)$ and $(\eta_k)$ if $q=1$ are strictly decreasing and converge to zero as $k\to \infty$. Moreover, $\sum_{k=1}^\infty a_k=\infty$ for all $q\in[1,2)$. See Lemma \ref{lem:b2} below.  As we will see in Theorem \ref{th:sup1}  below, for each $k\in \mathbb{N}$, $a_k$ determines the limit profile and energy of the $k$-th concentration part and $\delta_k$ and $\eta_k$ describe the height of it. Finally, we remark  that, as noted in Subsection 1.3 in \cite{N3}, \eqref{b3} and \eqref{b4} can be regarded as the limit formulas of \eqref{b1} and \eqref{b2} in the limit $q\to1^+$, that is, $p\to \infty$. Hence, those limit formulas imply the differences and continuous relations between the two types of infinite concentration and oscillation behaviors observed for the cases $q\in(1,2]$ and $q=1$.

Lastly, using the sequence $(a_k)$, we define the infinite sequence $(z_k)$ of the limit profiles. For each $k\in \mathbb{N}$, we put
\begin{equation}\label{def:zk}
z_k(r)=\log{\frac{2a_k^2 b_k}{r^{2-a_k}(1+b_kr^{a_k})^2}}
\end{equation}
for all $r\ge0$ if $k=1$ and all $r>0$ if $k>1$ with $b_k=(\sqrt{2}/a_k)^{a_k}$. Then,  for every $k\in \mathbb{N}$, $z_k$ satisfies 
\begin{equation}\label{eq:zk}
\begin{cases}
-z_k''-\frac1r z_k'=e^{z_k}\text{ in }(0,\infty),\\
z_k(a_k/\sqrt{2})=0,\ (a_k/\sqrt{2})z_k'(a_k/\sqrt{2})=2,
\end{cases}
\end{equation}
and 
\[
\int_0^\infty e^{z_k}rdr=2a_k.
\]
We notice that $z_k$ has a singularity at the origin if and only if $k\ge2$. For every $k\in \mathbb{N}$, $z_k$ gives the limit profile of the $k$-th concentration part. 
\subsection{Infinite concentration estimates in \cite{N3}}\label{sub:mr}
Now let us state our concentration estimates in the first part \cite{N3}. Let $(\la_n,\mu_n,u_n)\in (0,\infty)\times (0,\infty)\times C^2([0,1])$ satisfy  
\begin{equation}\label{pn}
\begin{cases}
- u_n''-\frac1r u_n'=\la_n hf(u_n), \ u_n>0&\text{ in }(0,1),\\
u_n(0)=\mu_n,\ u_n'(0)=0=u_n(1),
\end{cases}
\end{equation}
for all $n\in \mathbb{N}$. We call $\{(\la,\mu_n,u_n)\}$ a sequence of solutions of \eqref{pn}. We get the next infinite concentration estimates for the sequence as follows.   
\begin{theorem}[\cite{N3}]\label{th:sup1} Assume (H1) with $q\in [1,2)$. Let $\{(\la_n,\mu_n,u_n)\}$ be any sequence of solutions of \eqref{pn} such that $\mu_n\to \infty$ as $n\to \infty$.  Then, extracting a subsequence if necessary, for each $k\in \mathbb{N}$, there exists a sequence $(r_{k,n})\subset (0,1)$ of values such that 
 $u_n(r_{k,n})/\mu_n\to \delta_k$
and 
\begin{equation}\label{eq:sup0}
\la_nr_{k,n}^2h(r_{k,n})f'(u_n(r_{k,n}))\to \frac{a_k^2}2
\end{equation}
as $n\to \infty$ and if we put sequences $(\ga_{k,n})$ of values and $(z_{k,n})$ of functions so that for each large $n\in \mathbb{N}$,
\[
\la_nh(r_{k,n})f'(u_n(r_{k,n}))\ga_{k,n}^2=1
\]
and 
\[
z_{k,n}(r)=g'(u_n(r_{k,n}))(u_n(\ga_{k,n}r)-u_n(r_{k,n}))
\]
for all $r\in [0,1/\ga_{k,n}]$, then  we have that $\ga_{k,n}\to0$ and there exist sequences $(\rho_{k,n}),(\bar{\rho}_{k,n})\subset [0,1)$ of values, where we chose $\bar{\rho}_{k,n}=0$ for all $n\in \mathbb{N}$ if $k=1$, such that $\rho_{k,n}\to0$, $\bar{\rho}_{k,n}/\ga_{k,n}\to0$, $\rho_{k,n}/\ga_{k,n}\to \infty$,  $u_n(\bar{\rho}_{k,n})/\mu_n\to \delta_k$, $u_n(\rho_{k,n})/\mu_n\to \delta_k$,  and
\[
\|z_{k,n}-z_k\|_{C^2_{\text{loc}}([\bar{\rho}_{k,n}/\ga_{k,n},\rho_{k,n}/\ga_{k,n}])}\to0,
\]
as $n\to \infty$ and further, if $q=1$, we get more precisely that 
\begin{equation}\label{eq:sup00}
\displaystyle u_n(r_n)=\mu_n-\left(\log{\frac1{\eta_k}}+o(1)\right)\frac{g(\mu_n)}{g'(\mu_n)}
\end{equation}
for the sequences $(r_n)=(r_{k,n}),(\rho_{k,n}),$ and $(\bar{\rho}_{k,n})$   as $n\to \infty$. Moreover, for all $k\in \mathbb{N}$,  we get 
\begin{equation}\label{eq:sup01}
\begin{split}
\lim_{n\to \infty}g'(u_n(r_{k,n}))\int_{\bar{\rho}_{k,n}}^{\rho_{k,n}}&\la_n hf(u_n)rdr\\
&= 2a_k=\lim_{n\to \infty}\int_{\bar{\rho}_{k,n}}^{\rho_{k,n}}\la_n hf'(u_n)rdr
\end{split}
\end{equation}
and 
\begin{equation}\label{eq:sup02}
\lim_{n\to \infty}g'(\mu_n)\int_{\rho_{k,n}}^{\bar{\rho}_{k+1,n}}\la_n hf(u_n)rdr= 0.
\end{equation}
Furthermore, we obtain that
\begin{equation}\label{eq:sup1}
\lim_{n\to \infty}\int_0^1\la_n hf'(u_n)rdr=\infty.
\end{equation}
Finally, additionally assuming (H2) if $q=1$, we have that
\begin{equation}\label{eq:sup2}
\lim_{n\to \infty}\frac{\log{\frac1{\la_n}}}{g(\mu_n)}=0
\end{equation}
and 
\begin{equation}\label{eq:sup3}
\lim_{n\to \infty}\frac{\log{\frac1{r_{k,n}}}}{g(\mu_n)}=\frac{\eta_k}{2}
\end{equation}
for all $k\in \mathbb{N}$ and all $q\in[1,2)$.
\end{theorem}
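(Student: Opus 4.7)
The plan is to proceed by induction on $k$. For the base case $k=1$, standard blow-up analysis applies: set $\ga_{1,n}$ so that $\la_n h(0) f'(\mu_n) \ga_{1,n}^2 = 1$, define $z_{1,n}(r) = g'(\mu_n)(u_n(\ga_{1,n} r) - \mu_n)$, and use the generalized H\"older conjugate relations \eqref{g1}--\eqref{g2} together with (H1) to control the ratios $f(\mu_n + s/g'(\mu_n))/f(\mu_n)$ uniformly on compact sets of $s$. The rescaled equation then converges to $-z''-(1/r)z'=e^z$ on any fixed ball, with data $z_{1,n}(0)=0$ and $z_{1,n}'(0)=0$, so that $z_{1,n}$ converges in $C^2_{\mathrm{loc}}$ to the standard Liouville profile $z_1$ given by \eqref{def:zk} with $a_1=2$. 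The radius $r_{1,n}$ is chosen along the trajectory at which $z_{1,n}$ first departs from $z_1$, which yields \eqref{eq:sup0} and the first instance of \eqref{eq:sup01}.

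For the inductive step, assume the $k$-th concentration is identified. The key preliminary is the transition estimate \eqref{eq:sup02}: on the annulus outside the $k$-th concentration scale $\ga_{k,n}$ but inside the $(k+1)$-th scale, one integrates $(ru_n')'=-\la_n r h f(u_n)$ and uses that $u_n$ has dropped to a level at which $g'(u_n) f(u_n)$ is negligible compared to $g'(\mu_n) f(\mu_n)$, so no mass accumulates on the reference scale. One then takes $r_{k+1,n}$ to be the first radius where $u_n/\mu_n$ reaches $\delta_{k+1}$, respectively where the analog of \eqref{eq:sup00} with $\eta_{k+1}$ holds when $q=1$. The recurrence formulas \eqref{b1}--\eqref{b4} are obtained from a Pohozaev-type energy identity secured by multiplying the ODE by $r^2 u_n'$, combined with the asymptotic
\[
\frac{f(\mu_n s)}{f(\mu_n)}\sim \exp\bigl(g(\mu_n)(s^p-1)\bigr)\quad(q>1)
\]
and an iterated exponential analogue when $q=1$. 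Matching the energy gained in the new concentration against the energy lost across the transition produces exactly \eqref{b1}--\eqref{b2} (resp.\ \eqref{b3}--\eqref{b4}). Rescaling at the new scale $\ga_{k+1,n}$, and accounting for the fact that $u_n$ is already singular on the intermediate scale, yields the singular limit profile $z_{k+1}$ satisfying \eqref{eq:zk} with parameter $a_{k+1}$.

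Once the induction runs, the global assertions follow quickly. The divergence \eqref{eq:sup1} is immediate by summing \eqref{eq:sup01} over $k=1,\dots,K$ and letting $K\to\infty$, using $\sum a_k=\infty$ from Lemma \ref{lem:b2}. Under (H2) in the case $q=1$, the upper bound on $\la_n$ of Lemma \ref{lem:kap} combined with the matching $u_n(r_{k,n})/\mu_n\to \delta_k$ and the refined identity \eqref{eq:sup00} yields \eqref{eq:sup2}; substituting this into the defining relation $\la_n\ga_{k,n}^2 h(r_{k,n})f'(u_n(r_{k,n}))=1$, taking logarithms, and using the asymptotic $g(u_n(r_{k,n}))\sim\eta_k g(\mu_n)$ produces \eqref{eq:sup3}.

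The main obstacle is the inductive step, and specifically the derivation of the algebraic recurrences \eqref{b1}--\eqref{b4} from the PDE analysis. The difficulty is twofold: one must identify the correct small parameter governing the transition between successive concentrations, which is $\delta_{k+1}/\delta_k$ when $q>1$ but $\eta_{k+1}/\eta_k$ when $q=1$, and one must simultaneously justify passing to the Liouville limit on the new scale even though the limit $z_{k+1}$ is singular at the origin, via a careful inner/outer matching across the intermediate annulus. The dichotomy $q>1$ versus $q=1$ forces genuinely different matching arguments, tied to the behavior of $f(u)/f(\mu_n)$ on the relevant intermediate scales and, in the $q=1$ case, to the iterated structure $f=\exp_k \hat{g}$ furnished by (ii) of (H1).
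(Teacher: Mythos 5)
The statement you were asked to prove is, in this paper, quoted verbatim from the companion work \cite{N3}: the text under the theorem says explicitly ``The former assertions are proved in Theorem 1.2 while the latter ones \eqref{eq:sup1}, \eqref{eq:sup2}, and \eqref{eq:sup3} follow from Theorem 1.3 in \cite{N3}.'' There is therefore no proof in the present paper to compare against, and I can only assess your sketch on its own terms against the ingredients visible here (the Green identity of Lemma~\ref{lem:id}, the scaling lemmas~\ref{lem:g1}--\ref{lem:g4}, the Pohozaev consequence Lemma~\ref{lem:gl}, and the recurrence formulas \eqref{b1}--\eqref{b4}).

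With that caveat, the broad strategy you propose --- base Liouville blow-up at $r=0$, then inductive construction of further concentrations, with the two growth regimes $q>1$ and $q=1$ handled via Lemmas~\ref{lem:g3} and~\ref{lem:g4} respectively --- is exactly what the surrounding material of this paper presupposes, and there is no reason to think it is the wrong approach. However, as a \emph{proof}, your sketch has a genuine gap, and you partly acknowledge it yourself: the entire analytic substance of the inductive step is the derivation of the transition estimate \eqref{eq:sup02}, the existence of the next concentration, and the identification of the recurrences \eqref{b1}--\eqref{b4} by an inner/outer matching; you describe what needs to happen but do not carry it out. In particular, nothing in your sketch shows that $u_n/\mu_n$ actually drops to $\delta_{k+1}$ (respectively that the analogue of \eqref{eq:sup00} with $\eta_{k+1}$ is attained), nor that a new Liouville profile --- now singular at the origin --- emerges at that height with the correct parameter $a_{k+1}$. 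Asserting that the matching ``produces exactly'' \eqref{b1}--\eqref{b2} is precisely the claim that has to be proved, and it is the hard part.

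Two further inaccuracies are worth flagging. First, $r_{k,n}$ is not ``the radius at which $z_{k,n}$ first departs from $z_k$.'' By \eqref{eq:sup0} and \eqref{eq:zk} one has $r_{k,n}/\ga_{k,n}\to a_k/\sqrt{2}$, which is the maximizer of $r^2e^{z_k(r)}$ and is \emph{interior} to the convergence window $[\bar\rho_{k,n}/\ga_{k,n},\rho_{k,n}/\ga_{k,n}]$; the departure from the profile is governed by $\rho_{k,n}$ (with $\rho_{k,n}/\ga_{k,n}\to\infty$), not by $r_{k,n}$. Second, the Pohozaev-type identity for the radial 2D operator comes from multiplying $-(ru')'=\la r h f(u)$ by $ru'$, giving $-\tfrac12\big[(ru')^2\big]'=\la r^2 h f(u)u'$; your multiplier ``$r^2u_n'$'' is a misstatement of this. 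These are fixable, but combined with the unresolved inductive step they mean the proposal is a plan, not a proof.
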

The former assertions are proved in Theorem 1.2 while the latter ones \eqref{eq:sup1}, \eqref{eq:sup2}, and \eqref{eq:sup3} follow from Theorem 1.3 in \cite{N3}. The former ones tell us that for each $k\in \mathbb{N}$, there exists a ``center" $(r_{k,n})$ of the $k$-th concentration such that, after appropriate scaling  around it, blow-up solutions converge to the limit profile  $z_k$. Moreover, we see that each asymptotic  ``energy" and ``height" are given by $2a_k$ and approximately by $\delta_k$ times the maximum value respectively where if  $q=1$, we have more precise information \eqref{eq:sup00}. Consequently, since $k$ is arbitrary, we observe an infinite sequence of concentrating parts. Moreover, as the infinite series of $(a_k)$ diverges to infinity, we observe that the total energy diverges to infinity as in \eqref{eq:sup1}. (We remark that the uniform estimate for the $L^1$ norm of $hf'(u)$ plays a key role in  the study of the oscillation property of the bifurcation diagram in the supercritical case. See Lemma 2.2 in  \cite{FZ} and Lemma 4.2 in \cite{Ku}.)

Here we recall the result by Kumagai \cite{Ku}. He shows that blow-up solutions converge to singular solutions up to  subsequences in the region away from the origin. See Theorem 1.1 (B) there. Hence we may conclude that any blow-up solutions behave as an infinite sequence of bubbles in the region near the origin while it behaves as singular solutions in the region away from the origin. This behavior is drastically different from those in the subcritical and critical cases observed in the previous works. (For the standard subcritical behavior in a disc, see Theorems 1.1 and A.1 in \cite{N3} for instance.) 
In this paper, we carry out the deeper analysis of the interaction between  blow-up solutions and singular solutions (which are not necessarily the limit one). 

We finally remark that there are striking differences between the cases $q>1$ and $q=1$. As noted above, the height of the center $(r_{k,n})$ of the $k$-th concentration is asymptotically given by $\delta_k$ times the maximum value in both cases. Here notice that $\delta_k$ is strictly decreasing sequence whose limit is zero if $q>1$ while $\delta_k=1$ for all $k\in \mathbb{N}$ if  $q=1$. This means that in the case $q=1$, all the concentrating parts appear  in the much higher (almost highest)  region than that in the case $q>1$. In other words, in the case $q=1$, the infinite concentration occurs in the region which is much closer to the origin.  As noted above, since  \eqref{b3} and \eqref{b4} can be obtained as the suitable limit of \eqref{b1} and \eqref{b2} respectively, we may regard this is the limit behavior of that in the case $q>1$.  
\subsection{Oscillation analysis in this paper}\label{sub:intosc}
Finally, we can give more precise explanations of the outline of our main results in this paper. Thanks to our concentration estimates above, especially using the explicit description by the limit profiles and combining the energy estimates  \eqref{eq:sup01} and \eqref{eq:sup02} with the Green type identity in Lemma \ref{lem:id} below, we describe the asymptotic shapes of the graphs of blow-up solutions near the origin in Theorem \ref{th:osc}. This is the most essential and original result in this paper which leads to all the subsequent conclusions. Indeed, this  shows that each concentration produces a bump whose  top is attained at the center $(r_{k,n})$ of the concentration and whose bottom  $(r_{k,n}^*)$ appears between the two successive concentrating parts. See Corollary \ref{cor:tb} below. Then, thanks to our precise quantification of the tops and bottoms there, we ensure that any blow-up solutions oscillate infinitely many times around singular functions satisfying a suitable asymptotic condition. See the condition (C) and  Proposition \ref{cor:o2} in Subsection \ref{sub:int}. Here, from the argument in \cite{Mi}, we know that the divergence of the intersection number between blow-up solutions and singular solutions  induces  infinite boundary (or zero point) oscillations of the corresponding shooting type problems. 
 Noting these facts and ideas, we provide a reasonable sufficient condition based on (C) for several infinite oscillation behaviors of blow-up solutions around singular solutions. See Theorem \ref{th:bif} and Corollary \ref{cor:bif} which contain the assertions for the infinite oscillation of the bifurcation diagram  and the existence of infinitely many solutions of \eqref{p}.  Finally, adapting our setting to that in \cite{FIRT} via the  additional condition (H3) introduced in Subsection \ref{sub:bd} below, we prove that the singular solutions constructed in \cite{FIRT} confirm the desired condition (C). Especially, we show  that the desired oscillation phenomena appear on \eqref{p} with certain classes of nonlinearities based on (H3) in Theorem \ref{th:bif2} and Corollary \ref{cor:bif3}. This completes all of our main results. In addition, see the discussion under Remark \ref{rmk:bifa} for the explanation for the climbing-up behavior of the infinite sequence of bubbles.  
\subsection{Organization}
The organization of this paper is the following. In Section \ref{sec:main}, we state all of our main results on infinite oscillation phenomena on \eqref{p}. Next in Section \ref{sec:pre}, we collect basic properties of our generalized nonlinearities, key identities, and some notes on the energy recurrence formulas which will be used in our proof. Next,  in Sections \ref{sec:osc}, we prove our main theorems on  oscillation and intersection properties of blow-up solutions.  Finally, in Section \ref{sec:bif}, we show the proof of our main results on infinite oscillations  of blow-up solutions around singular solutions.   

In this paper, we often extract subsequences from given sequences without any changes of suffixes for simplicity. 
\section{Main results: Infinite oscillation estimates}\label{sec:main}
In this section, we state all of our main theorems.  In the following, for a given sequence $\{(\la_n,\mu_n,u_n)\}$ of solutions  of \eqref{pn}, by extracting a subsequence if necessary, we let  $(r_{k,n}),(\ga_{k,n}),(\rho_{k,n}),(\bar{\rho}_{k,n})\subset (0,1)$ be the sequences of values in Theorem \ref{th:sup1} for all $k\in \mathbb{N}$ without specific comments. We also recall sequences $(a_k)$, $(b_k)$, $(\delta_k)$, $(\eta_k)$ of numbers defined in Subsection \ref{sub:bf}. 
\subsection{Oscillation estimates}\label{sub:osc}
The next theorem is the most essential and original result in this paper which describes the asymptotic shapes of the graphs of blow-up solutions near the origin.  
\begin{theorem}\label{th:osc} Assume (H1) and $\{(\la_n,\mu_n,u_n)\}$ is a sequence of solution of \eqref{pn} such that $\mu_n\to \infty$ as $n\to \infty$. Then, up to a subsequence, we have the following. 
\begin{enumerate}
\item[(i)] Suppose $1\le q< 2$ and   $k\in \mathbb{N}$.  Take any sequence $(r_n)\subset [\bar{\rho}_{k,n},\rho_{k,n}]$ and  put $R_n=r_n/\ga_{k,n}$ for all $n\in \mathbb{N}$. Then we obtain that
\[
\begin{split}
g&(u_n(r_n))+\log{g'\left(u_n(r_n)\right)}=2\log{\frac1{\sqrt{\la_n}r_n}}+\log{\frac{2a_k^2 b_k R_n^{a_k}}{ h(0)(1+b_kR_n^{a_k})^2}}+o(1)
\end{split}
\]  
as $n\to \infty$. Especially,  assuming in addition $g(\mu_n)^{-1}\log{R_n}\to0$ as $n\to \infty$ if $k=1$, we have that 
\[
g(u_n(r_n))=\left(2+o(1)\right)\log{\frac1{r_n}}
\] 
as $n\to \infty$ for all $k\in\mathbb{N}$ where we additionally supposed also  (H2) if $q=1$. 
\item[(ii)] Assume $1<q<2$ and  $k\in \mathbb{N}$. Then for any sequence $(r_n)\subset [\rho_{k,n},\bar{\rho}_{k+1,n}]$ and value $\delta\in[\delta_{k+1},\delta_k]$ such that $u_n(r_n)/\mu_{n}\to \delta$, we  have that
\[
g(u_n(r_n))=(\alpha_{q,k}(\delta)+o(1))\log{\frac1{\sqrt{\la_n}r_n}}=(\alpha_{q,k}(\delta)+o(1))\log{\frac1{r_n}}
\]
as $n\to \infty$ where $\alpha_{q,k}:[\delta_{k+1},\delta_k]\to (0,2]$ is a function defined by
\[
\alpha_{q,k}(x)=\frac{2\left(x/\delta_k\right)^p}{1-\frac{2p}{2+a_k}\left(1-x/\delta_k\right)}
\]
for all $x\in[\delta_{k+1},\delta_k]$. 
\item[(iii)] Suppose $q=1$ and $k\in \mathbb{N}$. Then for any sequence $(r_n)\subset [\rho_{k,n},\bar{\rho}_{k+1,n}]$ and value $\eta\in [\eta_{k+1},\eta_k]$ such that 
\[
u_n(r_n)=\mu_n-\left(\log{\frac1\eta}+o(1)\right)\frac{g(\mu_n)}{g'(\mu_n)}
\]
as $n\to \infty$, we get that
\[
g(u_n(r_n))=(\alpha_{1,k}(\eta)+o(1))\log{\frac1{\sqrt{\la_n}r_n}}
\]
as $n\to \infty$ where $\alpha_{1,k}:[\eta_{k+1},\eta_k]\to (0,2]$ is a function defined by
\[
\alpha_{1,k}(x)=\frac{2\left(x/\eta_k\right)}{1-\frac{2}{2+a_k}\log{(\eta_k/x)}}
\]
for all $x\in[\eta_{k+1},\eta_k]$. Particularly, assuming additionally (H2), we obtain that
\[
g(u_n(r_n))=(\alpha_{1,k}(\eta)+o(1))\log{\frac1{r_n}}
\]
as $n\to \infty$.
\end{enumerate}
\end{theorem}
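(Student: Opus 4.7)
The proof splits naturally into the concentration regime (part (i)) and the transition regime (parts (ii) and (iii)), each building on the concentration estimates of Theorem \ref{th:sup1}.

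For part (i), the plan is to exploit the $C^2_{\text{loc}}$ convergence $z_{k,n}\to z_k$ directly. Given $r_n\in[\bar\rho_{k,n},\rho_{k,n}]$, set $R_n=r_n/\ga_{k,n}$. The definition of $z_{k,n}$ immediately gives $g'(u_n(r_{k,n}))(u_n(r_n)-u_n(r_{k,n}))=z_k(R_n)+o(1)$. A second-order Taylor expansion of $g$ and of $\log g'$ about $u_n(r_{k,n})$, with quadratic remainders controlled by $g''/(g')^2\sim 1/(qg)$ from (H1)(i), upgrades this to $g(u_n(r_n))-g(u_n(r_{k,n}))=z_k(R_n)+o(1)$ and $\log g'(u_n(r_n))-\log g'(u_n(r_{k,n}))=o(1)$. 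Taking logarithms of the scaling identity $\la_n h(r_{k,n})f'(u_n(r_{k,n}))\ga_{k,n}^2=1$ and using $\log f'=g+\log g'$ yields $g(u_n(r_{k,n}))+\log g'(u_n(r_{k,n}))=2\log(1/(\sqrt{\la_n}\ga_{k,n}))-\log h(r_{k,n})$. Writing $\log(1/\ga_{k,n})=\log(1/r_n)+\log R_n$, substituting the explicit form \eqref{def:zk} of $z_k$, and using $h(r_{k,n})\to h(0)$ produces the stated identity, since the $R_n^{2-a_k}$ from the denominator of $z_k$ combines with $2\log R_n$ to produce the claimed $R_n^{a_k}$. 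For the second assertion of (i) one then dismisses $\log\sqrt{\la_n}$ via \eqref{eq:sup2}, invokes $\log g'(t)=o(g(t))$ as $t\to\infty$ (a consequence of the regular variation of $g$ of index $p$ under (H1)), and controls $\log R_n$ either by the hypothesis when $k=1$ or by the fact that for $k\ge 2$ both $\log(\bar\rho_{k,n}/\ga_{k,n})$ and $\log(\rho_{k,n}/\ga_{k,n})$ are $o(g(\mu_n))$.

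For parts (ii) and (iii), the central tool is the first radial identity $-ru_n'(r)=\int_0^r s\la_n h(s)f(u_n(s))ds$. The crux is the outer asymptotics
\[
-r u_n'(r)=\frac{2+a_k+o(1)}{g'(u_n(r_{k,n}))}
\]
uniformly on $r\in[\rho_{k,n},\bar\rho_{k+1,n}]$. The ingredients are: \eqref{eq:sup01}, which supplies $\int_{\bar\rho_{j,n}}^{\rho_{j,n}}s\la_n hf(u_n)ds\sim 2a_j/g'(u_n(r_{j,n}))$ for each $j$; \eqref{eq:sup02}, which makes the intermediate sub-transitions negligible on this scale; and the telescoping identity
\[
\sum_{j=1}^k \frac{2a_j}{g'(u_n(r_{j,n}))}\sim \frac{2+a_k}{g'(u_n(r_{k,n}))},
\]
a direct consequence of the recursion \eqref{b2} (resp. \eqref{b4}) combined with $g'(u_n(r_{j,n}))/g'(\mu_n)\to \delta_j^{p-1}$ for $q>1$ (resp. $\to\eta_j$ for $q=1$, extracted from \eqref{eq:sup00} via a Taylor expansion exploiting $(g')^2/(gg'')\to 1$). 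Geometrically this precisely matches the outer limit $-Rz_k'(R)\to 2+a_k$ of the limit profile. Integrating $u_n'(s)\sim -(2+a_k)/(sg'(u_n(r_{k,n})))$ from $\rho_{k,n}$ to $r_n$ and combining with $u_n(\rho_{k,n})/\mu_n\to\delta_k$ (or with \eqref{eq:sup00} when $q=1$) links $\log(r_n/\rho_{k,n})$ to $\delta_k-\delta$ (resp. to $\log(\eta_k/\eta)$). For $q>1$, regular variation gives $g(u_n(r_n))/g(\mu_n)\to\delta^p$; for $q=1$, a Taylor expansion of $g$ about $\mu_n$ with $g''/(g')^2\sim 1/g$ gives $g(u_n(r_n))/g(\mu_n)\to\eta$. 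Forming the quotient with $\log(1/r_n)=\log(1/\rho_{k,n})-\log(r_n/\rho_{k,n})$ (using \eqref{eq:sup3} extended to $\rho_{k,n}$) and simplifying via \eqref{b1} (resp. \eqref{b3}) at the endpoints $\delta=\delta_k,\delta_{k+1}$ (resp. $\eta=\eta_k,\eta_{k+1}$) reproduces exactly the functions $\alpha_{q,k}$ and $\alpha_{1,k}$. The passage from $\log(1/(\sqrt{\la_n}r_n))$ to $\log(1/r_n)$ in both (ii) and (iii) is by \eqref{eq:sup2}, which in (iii) is precisely where (H2) enters.

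I expect the main obstacle to be establishing the outer asymptotics $-ru_n'(r)\sim(2+a_k)/g'(u_n(r_{k,n}))$ uniformly on the transition. A casual treatment of the $g''(u_n)(u_n')^2$ correction in the analogous Green identity for $g(u_n)$ produces an $O(1)$ error that only cancels against the telescoping combinatorics of the sequences $(a_k),(\delta_k),(\eta_k)$; checking this cancellation — that the same recursion which controls the inner limit profiles $z_k$ also governs the outer balance — is the technical heart of the proof. For $q=1$ this difficulty is compounded by the failure of finite-index regular variation of $g$: the routine step $g(\delta\mu)/g(\mu)\to\delta^p$ must be replaced by the more delicate Taylor expansion of $g(\mu_n+h)$ dictated by \eqref{eq:sup00} and $(g')^2/(gg'')\to 1$, forcing one to work throughout with $\eta$ rather than $\delta$.
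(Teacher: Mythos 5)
Your overall strategy matches the paper's: part (i) is derived from the scaling identity $\la_n h(r_{k,n}) f'(u_n(r_{k,n}))\ga_{k,n}^2 = 1$ combined with the convergence of the rescaled profiles, and parts (ii)--(iii) are derived from the radial integral identity together with the energy quantization \eqref{eq:sup01}--\eqref{eq:sup02}, the telescoping Lemma \ref{lem:b3}, the ratio Lemmas \ref{lem:g3}--\ref{lem:g4}, and \eqref{eq:sup2}--\eqref{eq:sup3}. The substantive difference, and where I see a genuine gap, is in how you justify the first display of (i).

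You propose to get $g(u_n(r_n))-g(u_n(r_{k,n}))=z_k(R_n)+o(1)$ by combining the $C^2_{\text{loc}}$ convergence $z_{k,n}\to z_k$ with a second-order Taylor expansion of $g$ and $\log g'$ about $u_n(r_{k,n})$, controlling the quadratic remainder by $g''/(g')^2\sim(qg)^{-1}$. This does not close as stated, because the sequence $(r_n)\subset[\bar\rho_{k,n},\rho_{k,n}]$ is arbitrary and $R_n=r_n/\ga_{k,n}$ can escape to $0$ or $\infty$ along the expanding intervals $[\bar\rho_{k,n}/\ga_{k,n},\rho_{k,n}/\ga_{k,n}]$. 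In that regime $z_{k,n}(R_n)$ is unbounded (indeed $z_k(R)\sim -(2+a_k)\log R$ as $R\to\infty$, and $\log(\rho_{k,n}/\ga_{k,n})$ is $o(g(\mu_{k,n}))$ but not $O(1)$), so the remainder $\tfrac{1}{2}g''(\xi_n)(u_n(r_n)-u_n(r_{k,n}))^2 = \tfrac{g''(\xi_n)}{2g'(t_n)^2}z_{k,n}(R_n)^2$ is only $o(g(\mu_{k,n}))$, not $o(1)$. Likewise $\phi'(t_n)(u_n(r_n)-t_n)=z_{k,n}(R_n)(1+g''(t_n)/g'(t_n)^2)$ gives $z_{k,n}(R_n)(1+o(1))$, not $z_{k,n}(R_n)+o(1)$, which is too weak. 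In addition, local uniform convergence of $z_{k,n}$ on compacta would not even deliver $z_{k,n}(R_n)=z_k(R_n)+o(1)$ when $R_n$ leaves every compact. What is actually used in the paper is the stronger uniform ratio estimate recalled from \cite{N3} (Lemma \ref{lem:o0}): $\sup_{R\in[\bar\rho_{k,n}/\ga_{k,n},\rho_{k,n}/\ga_{k,n}]}\bigl|\tfrac{(\ga_{k,n}R)^2\la_n h\,f'(u_n(\ga_{k,n}R))}{R^2e^{z_k(R)}}-1\bigr|\to0$, whose logarithm gives precisely $g(u_n(r_n))+\log g'(u_n(r_n))=-\log\la_n-2\log r_n-\log h(0)+2\log R_n+z_k(R_n)+o(1)$ over the whole interval. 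You should invoke this directly rather than attempt to rebuild it from the local convergence by Taylor expansion.

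For parts (ii) and (iii), your account is essentially a reorganized version of the paper's proof, and the worry you flag at the end is not a real obstacle. The paper does not Taylor-expand a Green identity for $g(u_n)$; it applies the identity \eqref{id2} to $u_n$ itself, sandwiches $u_n(\rho_{k,n})-u_n(r_n)$ between $\log(r_n/\rho_{k,n})\int_0^{\rho_{k,n}}\la_n h f(u_n)\,s\,ds$ and $\log(r_n/\rho_{k,n})\int_0^{r_n}\la_n h f(u_n)\,s\,ds$, and collapses the two bounds using \eqref{eq:sup01}--\eqref{eq:sup02} together with Lemma \ref{lem:b3} and the ratio $g'(u_n(r_{j,n}))/g'(\mu_n)\to\tilde\eta_j$. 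No $g''(u_n')^2$ correction arises, and the uniformity of the outer balance $(2+a_k+o(1))/g'(u_n(r_{k,n}))$ on the transition interval is immediate from monotonicity of the integral and \eqref{eq:sup02}. The conversion to $g$-values at the end via Lemma \ref{lem:g3} (resp.\ Lemma \ref{lem:g4}) and the removal of $\log\la_n$ via \eqref{eq:sup2} are exactly as you indicate.
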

We remark on (i). 
\begin{remark}\label{rmk:top} Noting the conclusion (i) with the fact
\[
\max_{R>0}\frac{2a_k^2 b_k R^{a_k}}{(1+b_kR^{a_k})^2}=\frac{a_k^2}{2}
\]
and \eqref{eq:sup0}, we can understand that the center  $(r_{k,n})$ of the $k$-th concentration attains the top of the $k$-th  bump. See the explanation below.
\end{remark}
We also give a comment on the function $\alpha_{q,k}$ in (ii) and (iii).
\begin{remark}\label{rmk:bot} Fix any $k\in \mathbb{N}$. Note that if $q\in(1,2)$, then the function $\alpha_{q,k}(x)$ in (ii) satisfies $\alpha_{q,k}(\delta_k)=\alpha_{q,k}(\delta_{k+1})=2$ by \eqref{b1} and has a unique minimum point $\delta_k^*\in(\delta_{k+1},\delta_k)$ with $\alpha_{q,k}^*:=\alpha_{q,k}(\delta_k^*)\in(0,2)$. Moreover, we see that $\alpha_{q,k}$ is strictly decreasing and increasing on $(\delta_{k+1},\delta_k^*)$ and $(\delta_k^*,\delta_k)$ respectively. We can explicitly compute that  
\[
\delta_k^*=\left(1-\frac{a_k}{2(p-1)}\right)\delta_k\text{\ \ \  and\ \ \  }\alpha_{q,k}^*=(2+a_k)\left(1-\frac{a_k}{2(p-1)}\right)^{p-1}.
\]
 Similarly, if $q=1$, then the function $\alpha_{1,k}$ in (iii) verifies $\alpha_{1,k}(\eta_k)=\alpha_{1,k}(\eta_{k+1})=2$ by \eqref{b3} and possesses a unique minimum point $\eta_k^*\in(\eta_{k+1},\eta_k)$ with $\alpha_{1,k}^*:=\alpha_{1,k}(\eta_k^*)\in(0,2)$. Furthermore,  $\alpha_{1,k}$ is strictly decreasing and increasing on $(\eta_{k+1},\eta_k^*)$ and $(\eta_k^*,\eta_k)$ respectively. We explicitly calculate that  
\[
\eta_k^*=e^{-a_k/2}\eta_k\text{\ \ \ and\ \ \ }\alpha_{1,k}^*=(2+a_k)e^{-a_k/2}. 
\]
Moreover, since  $(a_k)$ is strictly decreasing and converges to zero as $k\to \infty$ as in Lemma \ref{lem:b2} below, we can readily prove  that for any $q\in[1,2)$, the sequence $(\alpha_{q,k}^*)$ is strictly increasing and converges to $2$ as $k\to \infty$. These numbers characterize the bottom of the $k$-th bump as mentioned below.
\end{remark}
Noting the remarks above, we describe the precise estimates for the top and bottom of the $k$-th bump.
\begin{corollary}\label{cor:tb} Assume as in Theorem \ref{th:osc}. Then for each $k\in \mathbb{N}$, we have that
\[
\begin{split}
g&(u_n(r_{k,n}))+\log{g'\left(u_n(r_{k,n})\right)}=2\log{\frac1{\sqrt{\la_n}r_{k,n}}}+\log{\frac{a_k^2 }{2 h(0)}}+o(1)
\end{split}
\]
as $n\to \infty$ up to a subsequence. Moreover, there exists a sequence $(r_{k,n}^*)\subset (r_{k,n},r_{k+1,n})$ such that 
\[
\begin{cases}
\displaystyle\frac{u_n(r_{k,n}^*)}{\mu_n}\to \delta_k^* &\text{ if $q>1$},\\
\displaystyle u_n(r_{k,n}^*)=\mu_n-\left(\log{\frac1{\eta_k^*}}+o(1)\right)\frac{g(\mu_n)}{g'(\mu_n)}&\text{ if $q=1$,}  
\end{cases}
\]
and
\[
g(u_n(r_{k,n}^*))=\left(\alpha_{q,k}^{*}+o(1)\right)\log{\frac1{\sqrt{\la_n}r_{k,n}^*}}
\] 
as $n\to \infty$ up to a subsequence. In particular, additionally assuming (H2) if $q=1$, we get that
\[
g(u_n(r_{k,n}))=(2+o(1))\log{\frac1{r_{k,n}}}
\]
and
\[
g(u_n(r_{k,n}^*))=\left(\alpha_{q,k}^{*}+o(1)\right)\log{\frac1{r_{k,n}^*}}
\]
as $n\to \infty$ for all $q\in [1,2)$ up to a subsequence.
\end{corollary}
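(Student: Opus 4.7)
The corollary is a direct consequence of Theorem \ref{th:osc} applied at carefully chosen sequences of radii.

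\emph{Top.} I apply Theorem \ref{th:osc}(i) at $r_n=r_{k,n}$. Combining the defining relation $\la_n h(r_{k,n})f'(u_n(r_{k,n}))\ga_{k,n}^2=1$ with \eqref{eq:sup0} gives
\[
R_n^2=\la_n r_{k,n}^2h(r_{k,n})f'(u_n(r_{k,n}))\to\frac{a_k^2}{2},
\]
hence $R_n\to a_k/\sqrt{2}$. With $b_k=(\sqrt{2}/a_k)^{a_k}$, this forces $b_kR_n^{a_k}\to 1$, and as noted in Remark \ref{rmk:top} the quantity $2a_k^2b_kR_n^{a_k}/(1+b_kR_n^{a_k})^2$ converges to its maximum $a_k^2/2$. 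Substituting into (i) yields the first displayed identity of the corollary.

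\emph{Bottom.} By Remark \ref{rmk:bot} the function $\alpha_{q,k}$ (respectively $\alpha_{1,k}$) attains its minimum at an interior point $\delta_k^*\in(\delta_{k+1},\delta_k)$ (respectively $\eta_k^*\in(\eta_{k+1},\eta_k)$). In the case $q>1$, the strict monotonicity of $u_n$ on $(0,1)$ together with $u_n(\rho_{k,n})/\mu_n\to\delta_k$ and $u_n(\bar{\rho}_{k+1,n})/\mu_n\to\delta_{k+1}$ allows me to invoke the intermediate value theorem: for all large $n$ there exists $r_{k,n}^*\in(\rho_{k,n},\bar{\rho}_{k+1,n})\subset(r_{k,n},r_{k+1,n})$ with $u_n(r_{k,n}^*)/\mu_n=\delta_k^*$. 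Theorem \ref{th:osc}(ii) applied at this sequence with $\delta=\delta_k^*$ then delivers the estimate for $g(u_n(r_{k,n}^*))$. In the case $q=1$ the ratio $u_n/\mu_n$ no longer separates consecutive concentration levels; instead I would consider the strictly increasing function $v_n(r):=g'(\mu_n)(\mu_n-u_n(r))/g(\mu_n)$, for which \eqref{eq:sup00} gives $v_n(\rho_{k,n})\to\log(1/\eta_k)$ and $v_n(\bar{\rho}_{k+1,n})\to\log(1/\eta_{k+1})$. Since $\log(1/\eta_k^*)$ lies strictly between these limits, the intermediate value theorem supplies $r_{k,n}^*\in(\rho_{k,n},\bar{\rho}_{k+1,n})$ with $v_n(r_{k,n}^*)=\log(1/\eta_k^*)$, which is the required expansion for $u_n(r_{k,n}^*)$. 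Theorem \ref{th:osc}(iii) at $\eta=\eta_k^*$ then provides the asymptotic for $g(u_n(r_{k,n}^*))$.

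\emph{Clean forms and main obstacle.} The final two displayed identities follow by invoking the ``especially'' clauses: for the top, $R_n$ tends to the positive constant $a_k/\sqrt{2}$, so $g(\mu_n)^{-1}\log R_n\to 0$ is automatic and the second form in (i) applies; for the bottom, the two forms in (ii) are already equivalent, and under (H2) the analogous second form in (iii) handles $q=1$. The only genuinely delicate step is the $q=1$ bottom construction, where the coarse scale $u_n/\mu_n$ cannot distinguish the concentration levels $\delta_k\equiv1$ and one must descend to the finer scale set by $g(\mu_n)/g'(\mu_n)$ before the intermediate value argument becomes applicable; everything else is bookkeeping on top of Theorem \ref{th:osc} and Remark \ref{rmk:bot}.
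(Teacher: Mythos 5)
Your proposal is correct and follows essentially the same route as the paper: apply Theorem \ref{th:osc}(i) at $r_n=r_{k,n}$ using $R_n\to a_k/\sqrt{2}$ from \eqref{eq:sup0}, then obtain $r_{k,n}^*$ via the intermediate value theorem between $\rho_{k,n}$ and $\bar{\rho}_{k+1,n}$ and apply (ii) or (iii). The paper compresses the bottom construction into ``readily follows from (ii) and (iii),'' whereas you spell out the intermediate value argument explicitly, including the observation that for $q=1$ one must work on the scale $g'(\mu_n)(\mu_n-u_n(r))/g(\mu_n)$ rather than $u_n/\mu_n$; this is a helpful clarification but not a genuinely different approach.
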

For example, if $g(t)=t^p$ with $p\in(2,\infty)$, then the final formulas  imply
\[
u_n(r_{k,n})=\left((2+o(1))\log{\frac1{r_{k,n}}}\right)^{\frac1p}
\]
and
\[
u_n(r_{k,n}^*)=\left((\alpha_{q,k}^*+o(1))\log{\frac1{r_{k,n}}}\right)^{\frac1p}
\]
as $n\to \infty$ for all $k\in \mathbb{N}$ where $q=p/(p-1)$ and if $g(t)=\exp_l{(t^m)}$ with  $l\in \mathbb{N}$, $l\ge2$, and $m>0$, then
\[
u_n(r_{k,n})=\left\{\log_l{\left((2+o(1))\log{\frac1{r_{k,n}}}\right)}\right\}^{\frac1m}
\]
and
\[
u_n(r_{k,n}^*)=\left\{\log_l{\left((\alpha_{1,k}^*+o(1))\log{\frac1{s_{k,n}}}\right)}\right\}^{\frac1m}
\]
as $n\to \infty$ for all $k\in \mathbb{N}$ where we defined $\log_1{(t)}=\log{(t)}$ and $\log_l{(t)}=\log_{l-1}{(\log{t})}$ for all $l\in\mathbb{N}$ with $l\ge2$ by induction for all large $t>0$.

The theorem above gives the desired estimates for wider and new classes of  nonlinearities than those  in  Theorem 5.1 in \cite{N2}.  Particularly, the conclusions for the multiple exponential case $q=1$ are completely new. Recall also that even for the Trudinger-Moser type case $q\in(1,2)$, (H1) admits the new cases as noted in Subsection \ref{sub:ge}. The basic qualitative  interpretations for the cases $q>1$ and $q=1$ are same while there is the exact quantitative difference described with the two different  types of the energy recurrence formulas defined in Subsection \ref{sub:bf}. 

First, (i) implies that for each $k\in \mathbb{N}$, $u_n(r)$ stays on the curve approximately given by $g^{-1}((2+o(1))\log(1/r))$, where $g^{-1}$ is the inverse function of $g$, as long as $r$ stays in the $k$-th concentration interval $[\bar{\rho}_{k,n},\rho_{k,n}]$. Moreover by Remark \ref{rmk:top}, we notice that it attains the highest curve  at the center $(r_{k,n})$ of the $k$-th concentration. Next, we observe with (ii) and (iii) that as $r$ increases from $\rho_{k,n}$ to $r_{k,n}^*$, $u_n(r)$ rapidly goes down and crosses the lower curve $g^{-1}((\alpha_{q,k}(x)+o(1))\log{(1/r)})$ for all $x\in[\delta_k^*,\delta_k]$ if $q>1$ and for all $x\in[\eta_k^*,\eta_k]$ if $q=1$. Then, it finally arrives at the lowest curve $g^{-1}((\alpha_{q,k}^*+o(1))\log{(1/r)})$ at $r=r_{k,n}^*$. After that, as $r$ increases from $r_{k,n}^*$ to $r_{k+1,n}$, this time $u_n(r)$  goes down much more slowly and it crosses the upper curve $g^{-1}((\alpha_{q,k}(x)+o(1))\log{(1/r)})$ for all $x\in[\delta_{k+1},\delta_k^*]$ if $q>1$ and for all $x\in[\eta_{k+1},\eta_k^*]$ if $q=1$. Especially, it lastly arrives at the highest curve approximately given by $g^{-1}((2+o(1))\log{(1/r)})$ at the center $(r_{k+1,n})$ of the $(k+1)$-th concentration. This means the $u_n(r)$ relatively ``goes up" in the interval $[r_{k,n}^*,r_{k+1,n}]$.  Since $k$ is arbitrary, these down-and-up behaviors produce an infinite sequence of  bumps on the graphs of large solutions near the origin.  In other words, we prove an infinite oscillation behavior  of blow-up solutions among the suitable  asymptotic curves.  Notice also that since $\alpha_{q,k}^*$ is strictly increasing and converges to $2$ as $k\to \infty$,  the ``amplitude" of the $k$-th oscillation becomes smaller and smaller as $k$ increases and finally vanishingly small as $k\to \infty$. 

Now, we note the quantitative differences between the cases $q>1$ and $q=1$. That is,  in the case $q=1$, since the infinite concentration occurs in a much higher (the almost highest) region as noted before, so does the infinite oscillation. In this sense, a much rapider infinite oscillation is observed in the multiple exponential case $q=1$. 

Finally, we remark that this result widely generalizes and more precisely quantifies the discussion on ``the bouncing process"  in \cite{McMc}. Thanks to our  precise quantification, we can proceed to the analysis of  intersection properties as follows.  
\subsection{Intersection properties}\label{sub:int}
The previous oscillation estimates allow us to show infinite intersection properties between blow-up solutions and singular functions with suitable asymptotic behaviors. To see this, for any values $\alpha\in(0,2]$ and $\beta\in \mathbb{R}$, we put a function
\[
U_{\alpha,\beta}(r)=\alpha\log{\frac1r}+\beta
\]
for all small $r>0$. Then we introduce the next key condition.  We say a  continuous function $U(r)$ defined for all small $r>0$ satisfies the condition (C) if   
\begin{enumerate}
\item[(C)] for any $\alpha\in(0,2)$ and $\beta\in \mathbb{R}$, there exists a value $\bar{r}>0$ such that 
\[
U_{\alpha,0}(r)\le g(U(r))\text{\ \ \  and \ \ \ }g(U(r))+\log{g'(U(r))}\le U_{2,\beta}(r)
\]
for all $r\in (0,\bar{r})$.
\end{enumerate} 
Then, we give the next assertion. In the following, for any interval $I\subset (0,\infty)$ and continuous function $u$ on $I$, we define $Z_{I}[u]$ as the number of zero points of $u$ on $I$. 
\begin{proposition}\label{cor:o2} Assume (H1), (H2), and $\{(\la_n,\mu_n,u_n)\}$ is a sequence of solutions of \eqref{pn} such that  $\mu_n\to \infty$ as $n\to \infty$. Moreover, suppose $U(r)$ is a continuous function defined for all small $r>0$ satisfying the condition (C).  Then for all  $k\in \mathbb{N}$, there exist sequences $(r_{k,n}^\pm)\subset (0,1)$ such that $r_{k,n}^-<r_{k,n}<r_{k,n}^+$ and $u_n(r_{k,n}^\pm)=U(r_{k,n}^\pm)$ for all large $n\in \mathbb{N}$ and $u_n(r_{k,n}^\pm)/\mu_n\to \delta_k$ and if $q=1$, 
\[
u_n(r_{k,n}^{\pm})=\mu_n-\left(\log{\frac1{\eta_k}}+o(1)\right)\frac{g(\mu_n)}{g'(\mu_n)}
\]
as $n\to \infty$ up to a subsequence. Especially, for any sequence $(r_n)\subset (0,1)$, in the interval where $U$ is defined,  such that $u_n(r_n)/\mu_n\to0$ if $q>1$ and $(\mu_n-u_n(r_n))g'(\mu_n)/g(\mu_n)\to \infty $ if $q=1$ as $n\to \infty$, we get 
\[
\lim_{n\to \infty}Z_{(0,r_n)}[u_n-U]=\infty.
\]
\end{proposition}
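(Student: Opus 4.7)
The plan is to apply the intermediate value theorem to $u_n - U$ near each concentration center $r_{k,n}$. For each fixed $k$ I will establish (a) $u_n(r_{k,n}) > U(r_{k,n})$ for $n$ large, and (b) auxiliary points $s_{k,n}^{\pm,\varepsilon}$ on either side of $r_{k,n}$ at which $u_n < U$ and whose heights approach $\delta_k\mu_n$ (respectively, for $q = 1$, the finer level $\mu_n - \log(1/\eta_k)g(\mu_n)/g'(\mu_n)$) as $\varepsilon \to 0$. Taking $r_{k,n}^\pm$ to be the intersection of $u_n$ with $U$ closest to $r_{k,n}$ on each side, strict monotone decrease of $u_n$ squeezes $u_n(r_{k,n}^\pm)$ between $u_n(r_{k,n})$ and $u_n(s_{k,n}^{\pm,\varepsilon})$, and sending $\varepsilon \to 0$ yields the claimed height asymptotics.

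For step (a), Corollary \ref{cor:tb} gives
\[
g(u_n(r_{k,n})) + \log g'(u_n(r_{k,n})) = 2\log(1/r_{k,n}) + \log(1/\la_n) + \log(a_k^2/(2h(0))) + o(1),
\]
while Lemma \ref{lem:kap} under (H2) yields a uniform upper bound $\la_n \le \kappa$, hence $\log(1/\la_n) \ge -\log\kappa$. Fix any $\beta < \log(a_k^2/(2h(0))) - \log\kappa$. By condition (C), $g(U(r_{k,n})) + \log g'(U(r_{k,n})) \le 2\log(1/r_{k,n}) + \beta$ for $n$ large (using $r_{k,n} \to 0$). Subtracting, $g(t) + \log g'(t)$ is strictly larger at $t = u_n(r_{k,n})$ than at $t = U(r_{k,n})$; the strict monotonicity of this map (from $g', g'' > 0$ in (H1)) yields $u_n(r_{k,n}) > U(r_{k,n})$.

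For step (b) with $q \in (1, 2)$ and $k \ge 2$, fix small $\varepsilon > 0$ and let $s_{k,n}^{+,\varepsilon}$ be the unique point with $u_n(s_{k,n}^{+,\varepsilon}) = (\delta_k - \varepsilon)\mu_n$; Theorem \ref{th:sup1} places it in $[\rho_{k,n}, \bar{\rho}_{k+1,n}]$ for $n$ large. Combining Theorem \ref{th:osc}(ii) with \eqref{eq:sup2} gives $g(u_n(s_{k,n}^{+,\varepsilon})) = (\alpha_{q,k}(\delta_k - \varepsilon) + o(1))\log(1/s_{k,n}^{+,\varepsilon})$, and Remark \ref{rmk:bot} ensures $\alpha_{q,k}(\delta_k - \varepsilon) < 2$. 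Choosing $\alpha \in (\alpha_{q,k}(\delta_k - \varepsilon), 2)$ and invoking the lower bound in (C) produces $g(U(s_{k,n}^{+,\varepsilon})) \ge \alpha\log(1/s_{k,n}^{+,\varepsilon}) > g(u_n(s_{k,n}^{+,\varepsilon}))$ for $n$ large, so $u_n < U$ at $s_{k,n}^{+,\varepsilon}$. The mirror construction, with $u_n(s_{k,n}^{-,\varepsilon}) = (\delta_k + \varepsilon)\mu_n$ placing $s_{k,n}^{-,\varepsilon}$ in $[\rho_{k-1,n}, \bar{\rho}_{k,n}]$ and using $\alpha_{q,k-1}$ in place of $\alpha_{q,k}$, produces $s_{k,n}^{-,\varepsilon}$. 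IVT then furnishes $r_{k,n}^\pm$; the monotonicity squeeze combined with $\varepsilon \to 0$ gives $u_n(r_{k,n}^\pm)/\mu_n \to \delta_k$. For $k = 1$ the left side is handled separately: any $s = \exp(-g(\mu_n)/\alpha)$ with fixed $\alpha \in (0,2)$ lies to the left of $r_{1,n}$ (by \eqref{eq:sup3} with $\eta_1 = 1$) and satisfies $U(s) > \mu_n \ge u_n(s)$ by (C). For $q = 1$, heights are replaced by $\mu_n - (\log(1/\eta_k) \pm \varepsilon)g(\mu_n)/g'(\mu_n)$ and Theorem \ref{th:osc}(iii) replaces (ii), yielding the refined asymptotic.

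For the zero-counting assertion, fix $K \in \mathbb{N}$. When $q > 1$, the hypothesis $u_n(r_n)/\mu_n \to 0$ combined with $u_n(r_{k,n}^\pm)/\mu_n \to \delta_k \ge \delta_K > 0$ forces $u_n(r_n) < u_n(r_{k,n}^\pm)$ for all $k \le K$ and $n$ large, so strict monotonicity of $u_n$ gives $r_n > r_{k,n}^\pm$. When $q = 1$, the analogous comparison uses $(\mu_n - u_n(r_n))g'(\mu_n)/g(\mu_n) \to \infty$ against the bounded limits $\log(1/\eta_k)$ at the intersection points. Either way $(0, r_n)$ contains $2K$ distinct zeros of $u_n - U$, so $Z_{(0, r_n)}[u_n - U] \ge 2K$; letting $K \to \infty$ finishes. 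The chief technical subtlety lies in step (b): verifying that $s_{k,n}^{\pm,\varepsilon}$ lands in the correct range of validity for Theorem \ref{th:osc}(ii) or (iii), and threading the $\varepsilon \to 0$ limit so that the limiting height is exactly $\delta_k$ (or the exact $\log(1/\eta_k)$ correction for $q = 1$) rather than only something in a neighborhood.
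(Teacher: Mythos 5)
Your proposal is correct and follows essentially the same strategy as the paper: establish $u_n>U$ at the concentration center $r_{k,n}$ via Corollary \ref{cor:tb}, $\la_n\le\kappa$ (Lemma \ref{lem:kap} via (H2)), the strict monotonicity of $t\mapsto g(t)+\log g'(t)$, and the upper bound in (C); establish $u_n<U$ at bracketing points on each side using Theorem \ref{th:osc}(ii)/(iii), the strict inequality $\alpha_{q,\cdot}<2$ away from the endpoints, and the lower bound in (C); then apply IVT. The one place where you depart slightly is in the height asymptotics. The paper brackets with the bottom points $r_{k-1,n}^*, r_{k,n}^*$ of Corollary \ref{cor:tb} (where $\alpha_{q,k}$ attains its minimum $\alpha_{q,k}^*$), which forces $u_n(r_{k,n}^\pm)/\mu_n$ to have subsequential limits in $[\delta_k^*,\delta_{k-1}^*]$, and then runs a contradiction argument with Theorem \ref{th:osc}(ii)/(iii) to rule out every limit other than $\delta_k$. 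You instead bracket at the $\varepsilon$-dependent heights $(\delta_k\pm\varepsilon)\mu_n$, take the intersections nearest $r_{k,n}$, and obtain the height convergence directly by squeezing and sending $\varepsilon\to 0$, avoiding the contradiction step. Both approaches use the same inputs and the same quantity of machinery; yours is marginally more direct for the height step, while the paper's uses the canonical points $r_{k,n}^*$ already supplied by Corollary \ref{cor:tb}. Your handling of the $k=1$ left bracket (via $s=\exp(-g(\mu_n)/\alpha)$ and the fact that (C) forces $U(s)\ge\mu_n$) matches the paper's observation that (C) implies $U(r)\to\infty$ as $r\to0^+$. The zero-counting finale is identical in substance.
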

In this proposition, we observe that the condition (C)  gives a sufficient condition on the asymptotic behavior of given singular functions $U$ for the divergence of the intersection number between blow-up solutions $u_n$ and $U$. We remark that this corollary shows not only the divergence of intersection numbers but also the precise estimates for the positions where the intersections occur. In particular, it tells us that two intersections occur at the points $(r_{k,n}^\pm)$ near the center $(r_{k,n})$ of the $k$-th concentration for all $k\in \mathbb{N}$. This implies that in the case $q=1$, the infinite intersection appears in much higher region than that in the case $q\in(1,2)$. Hence we again observe two types of infinite intersection properties depending on the choice of the growth. 

We here note that  for the application to the analysis of infinite oscillations around singular solutions in the next subsection,  we prove the similar result on the corresponding shooting type problem. See Proposition \ref{cor:int2} below. Based on these intersection assertions, we  proceed to our final discussion. 
\subsection{Infinite oscillations around singular solutions}\label{sub:bd}
We lastly provide several results on the infinite oscillation of blow-up solutions around singular solutions. In the following, we focus on the case $f\in C^2([0,\infty))$ with $f>0$ on $(0,\infty)$ and $h\equiv1$. In this case, from Lemma 2.1 in \cite{AKG} with Theorem 2.1 in \cite{NT}, for each $\mu>0$, there exists a unique solution $(\la,u)=(\la(\mu),u(\mu,\cdot))$ of \eqref{p} with $u(\mu,0)=\mu$. Then, from Theorem 2.1 in \cite{K}, the map $\mu \mapsto (\la(\mu),u(\mu,\cdot))$, defined for all $\mu>0$, draws a $C^1$-curve in $\mathbb{R}\times C^{2,\alpha}([0,1])$ which covers all the solutions $(\la,u)$ of \eqref{p}. Our aim is to study the infinite oscillation behavior along this solutions curve under our setting (H1). To this end,  it is important to discuss the  intersection property between blow-up solutions and singular solutions, that is,  solutions $(\la,U)\in (0,\infty)\times C^2((0,1])$ of the next problem, 
\begin{equation}\label{eq:sg}
\begin{cases}
-U''-\frac 1r U'=\la f(U),\ U>0\text{ in }(0,1),\\
\lim_{r\to 0^+}U(r)=\infty,\ U(1)=0.
\end{cases}
\end{equation}
As we can expect from the previous proposition, the condition (C)  gives a reasonable sufficient condition for the divergence of the intersection number between blow-up solutions and singular solutions. Hence, with the idea in \cite{Mi}, we see that it turns into a natural sufficient condition for the several infinite oscillation behaviors around singular solutions along the solutions curve as follows. 
\begin{theorem}\label{th:bif} Assume (H1) with $q\in[1,2)$, $f\in C^2([0,1])$ with $f>0$ on $(0,\infty)$, and $h\equiv1$. If $q=1$, we suppose in addition (H2). Moreover, assume that there exists a solution $(\la,U)=(\la^*,U^*)$ of \eqref{eq:sg} such that $U^*$ satisfies the condition (C).  Finally, let $(\la(\mu),u(\mu,\cdot))$ be the solutions $C^1$-curve of \eqref{p} as above. Then we get the following assertions (i), (ii), and (iii). 
\begin{enumerate}
\item[(i)]  $\la(\mu)$  oscillates around $\la^*$ infinitely many times as $\mu\to \infty$.
\item[(ii)] $u_r(\mu,1)$ oscillates around $(U^*)'(1)$ infinitely many times as $\mu\to \infty$  where $u_r(\mu,r)$ denotes the first derivative of $u(\mu,r)$ with respect to $r$.
\item[(iii)] If we additionally assume (H2),
then we get that
\[
\lim_{\mu \to \infty}Z_{(0,1)}[u(\mu,\cdot)-U^*]=\infty.
\]
\end{enumerate} 
\end{theorem}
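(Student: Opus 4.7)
The plan is to deduce part (iii) directly from Proposition \ref{cor:o2} and to obtain parts (i) and (ii) by a Miyamoto-style topological argument, following \cite{Mi}, built on the shooting-type analog, Proposition \ref{cor:int2}.

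For (iii), apply Proposition \ref{cor:o2} with $U = U^*$ (which satisfies (C) by hypothesis) and $r_n = 1$ for every $n$. Since $u(\mu_n, 1) = 0$, the auxiliary hypothesis $u_n(r_n)/\mu_n \to 0$ holds trivially when $q > 1$; in the case $q = 1$, the condition $(\mu_n - u_n(r_n)) g'(\mu_n)/g(\mu_n) = \mu_n g'(\mu_n)/g(\mu_n) \to \infty$ follows from (H1), since $q = 1$ corresponds to $p = \infty$ and thus $t g'(t)/g(t) \to \infty$. Proposition \ref{cor:o2} then yields $Z_{(0,1)}[u(\mu_n, \cdot) - U^*] \to \infty$ for every sequence $\mu_n \to \infty$, which is precisely (iii).

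For (i) and (ii), introduce the $\lambda^*$-shooting solution $v(\mu, r)$ defined by $-v'' - v'/r = \lambda^* f(v)$, $v(\mu, 0) = \mu$, $v'(\mu, 0) = 0$, and let $\rho(\mu)$ denote its first zero. A direct rescaling ($v(\mu, r) = \bar{u}(\mu, r\sqrt{\lambda^*})$ where $\bar{u}$ solves the parameter-free ODE with $\bar{u}(\mu, 0) = \mu$) identifies $\rho(\mu) = \sqrt{\lambda(\mu)/\lambda^*}$, so $\lambda(\mu)$ oscillates around $\lambda^*$ if and only if $\rho(\mu)$ oscillates around $1$. The shooting analog, Proposition \ref{cor:int2}, furnishes $\mathcal{N}(\mu) := Z_{(0, \min\{\rho(\mu), 1\})}[v(\mu, \cdot) - U^*] \to \infty$. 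The sign of $v - U^*$ near $r = 0$ is negative (since $v(\mu, 0) = \mu$ is finite while $U^*(r) \to \infty$), while at the right endpoint the sign is $-$ when $\rho(\mu) < 1$ (there $v \to 0 < U^*$) and $+$ when $\rho(\mu) > 1$ (there $U^* \to 0 < v$); consequently the parity of $\mathcal{N}(\mu)$ encodes $\operatorname{sgn}(\lambda(\mu) - \lambda^*)$.

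Along the $C^{1}$-curve $\mu \mapsto (\lambda(\mu), u(\mu, \cdot))$, $\mathcal{N}(\mu)$ can jump only by $\pm 2$ at interior tangential intersections (parity-preserving) or by $\pm 1$ at boundary crossings where $\rho(\mu) = 1$, equivalently $\lambda(\mu) = \lambda^*$ (parity-flipping). Combined with the bump structure from Theorem \ref{th:osc} and Corollary \ref{cor:tb}, the emergence of each new bump as $\mu \to \infty$ must be accompanied by intersection events that sweep through the boundary, forcing infinitely many $\pm 1$ jumps and hence infinitely many crossings $\lambda(\mu) = \lambda^*$, which proves (i). Part (ii) follows by an entirely parallel parity argument applied directly to the original problem on $(0, 1)$: since $u(\mu, 1) = U^*(1) = 0$, the sign of $u - U^*$ just inside $r = 1$ is $\operatorname{sgn}((U^*)'(1) - u_r(\mu, 1))$, so an analogous parity analysis forces $u_r(\mu, 1) - (U^*)'(1)$ to change sign infinitely often. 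The main obstacle is the topological step, namely verifying that the parity of $\mathcal{N}$ actually flips infinitely often rather than being preserved by a sequence of $\pm 2$ interior tangential jumps; this requires careful bookkeeping of how the intersections associated with each newly emerging bump interact with the boundary, exploiting the sharp asymptotic description in Theorem \ref{th:osc} and Corollary \ref{cor:tb} that locates the tops and bottoms of the bumps both above and below $U^*$.
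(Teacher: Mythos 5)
The plan for (iii) matches the paper: it is an application of Proposition \ref{cor:o2} with $U=U^*$, and your check that the hypotheses on $(r_n)$ hold in both cases $q>1$ and $q=1$ is correct (though you should take $r_n$ inside $(0,1)$, e.g.\ $r_n=1-1/n$, and use Lemma \ref{lem:gl} to see $u_n(r_n)/\mu_n\to 0$; the endpoint $r_n=1$ is not literally allowed by the proposition's statement).

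For (i) and (ii) the overall framework is correct — the paper indeed reduces to the shooting problem, invokes Proposition \ref{cor:int2} (via its Lemma \ref{lem:bif1}), and then runs a Miyamoto-style intersection-tracking argument (Lemma \ref{lem:bif2}) — but your write-up leaves the decisive step unresolved and misidentifies where the difficulty lies. You worry about ``$\pm 2$ interior tangential jumps'' preserving the parity of $\mathcal{N}(\mu)$; in fact no interior tangency can occur, because if $v(\mu,r_0)=V^*(r_0)$ and $v_r(\mu,r_0)=(V^*)'(r_0)$ at some interior $r_0$ then uniqueness for the ODE initial value problem forces $v(\mu,\cdot)\equiv V^*$, which is absurd. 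This is exactly the observation the paper exploits: it gives finiteness of $N(\mu)$, local continuity/isolatedness of the intersection points via the implicit function theorem, and the conclusion that $N(\mu)$ can change only when $r(\mu)$ meets $r^*$, and then only by one. The genuine subtlety you do not address is what happens when $r(\mu)$ \emph{touches} $r^*$ without crossing, possibly lingering there: the paper handles this by the case analysis (Cases 1--4) comparing $|v_r(\mu,r^*)|$ with $|(V^*)'(r^*)|$, and shows that if $r(\mu)$ stays on one side of $r^*$ for all large $\mu$, then $N(\mu)\le N(\mu_0)+1$, contradicting $N(\mu)\to\infty$; the Appendix makes this tracking rigorous. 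Your own parity observation (sign of $\lambda(\mu)-\lambda^*$ is encoded by the parity of $\mathcal{N}(\mu)$) is a clean reformulation, but without the uniqueness/transversality input and the careful treatment of boundary touch-and-stay events it does not close the argument — and you explicitly concede that this ``careful bookkeeping'' is left undone. In addition, you omit the verification (done in Lemma \ref{lem:bif1}) that the hypotheses of Proposition \ref{cor:int2} are actually met by the sequences $(r_n,\mu_n,v_n)$ and $(s_n)$, which requires using \eqref{eq:sup2}, \eqref{eq:sup3} and Lemma \ref{lem:b2} to rule out the possibility that the counting window $(0,\min\{r(\mu),r^*\})$ stops short of catching the concentration intervals.
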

\begin{remark} The oscillation assertions in (i) and (ii) mean that there exist sequences $(\mu_n^\pm)$ and $(\nu_n^\pm)$ of positive values such that $\mu_n^\pm,\nu_n^\pm\to \infty$ as $n\to \infty$ and $\la(\mu_n^-)<\la^*<\la(\mu_n^+)$ and $|u_r(\nu_n^-,1)|<|(U^*)'(1)|<|u_r(\nu_n^+,1)|$ for all $n\in \mathbb{N}$. 
\end{remark}
The assertions (i), (ii), and (iii) above give three kinds of oscillation behaviors along the  solutions curve. Among others, (i) shows the infinite oscillation of the bifurcation diagram which leads to  the existence of infinitely many solutions as follows. 
\begin{corollary}\label{cor:bif}
Assume as in the previous theorem. Then, for any number $N\in \mathbb{N}$, there exists a value $\e>0$ such that \eqref{p} permits at least $N$ distinct solutions $u$ for all $\la^*-\e<\la<\la^*+\e$ and there exists an infinite sequence $(u_n)$ of solutions of \eqref{p} with $\la=\la^*$ such that $u_n(0)\to \infty$ as $n\to \infty$.
\end{corollary}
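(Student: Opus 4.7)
The plan is to deduce the corollary directly from the oscillation assertion in Theorem \ref{th:bif}(i) by an intermediate value theorem argument applied to the continuous function $\mu \mapsto \la(\mu)$, which is known to be $C^1$ by the results quoted in Subsection \ref{sub:bd}.

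First, I would extract from the two sequences $(\mu_n^-), (\mu_n^+)$ supplied by Theorem \ref{th:bif}(i), which satisfy $\mu_n^\pm \to \infty$ and $\la(\mu_n^-) < \la^* < \la(\mu_n^+)$, a single strictly increasing sequence $\mu^{(1)} < \mu^{(2)} < \mu^{(3)} < \cdots$ tending to $\infty$ along which $\la(\mu^{(j)}) - \la^*$ alternates sign. This is obtained by an immediate induction: at each step I pick from the unused tail of whichever of $(\mu_n^\pm)$ has the required sign an index large enough that the chosen value exceeds the current maximum, which is always possible because $\mu_n^\pm \to \infty$.

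To prove the multiplicity assertion, I fix $N\in \mathbb{N}$ and take the first $N+1$ interlacing points. Setting
\[
\e := \tfrac{1}{2}\min_{1\le j\le N+1} |\la(\mu^{(j)})-\la^*|,
\]
I observe that for any $\la \in (\la^*-\e,\la^*+\e)$ the value $\la$ lies strictly between $\la(\mu^{(j)})$ and $\la(\mu^{(j+1)})$ for each $j=1,\ldots,N$, since these two numbers lie on opposite sides of $\la^*$ and each is more than $\e$ away from it. Continuity of $\la(\cdot)$ together with the intermediate value theorem then furnishes some $\nu_j \in (\mu^{(j)},\mu^{(j+1)})$ with $\la(\nu_j)=\la$. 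The solutions $u(\nu_j,\cdot)$ of \eqref{p} for $j=1,\ldots,N$ are pairwise distinct because $u(\nu_j,0)=\nu_j$ and the intervals $(\mu^{(j)},\mu^{(j+1)})$ are disjoint.

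Finally, applying the intermediate value theorem with the particular value $\la=\la^*$ on each of the infinitely many intervals $(\mu^{(j)},\mu^{(j+1)})$ produces a sequence $(\tau_j)$ with $\tau_j \in (\mu^{(j)},\mu^{(j+1)})$, $\tau_j \to \infty$, and $\la(\tau_j)=\la^*$. Setting $u_j := u(\tau_j,\cdot)$ yields the required infinite sequence of distinct solutions of \eqref{p} at $\la=\la^*$ with $u_j(0)=\tau_j\to \infty$. I do not foresee any real obstacle beyond Theorem \ref{th:bif}(i) itself: the corollary rests only on the continuity of the solutions curve and IVT, and the parametrization $u(\mu,0)=\mu$ automatically makes distinct $\mu$-values correspond to distinct solutions, so counting preimages of $\la$ under $\la(\cdot)$ genuinely counts solutions.
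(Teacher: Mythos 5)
Your proof is correct and fills in exactly the intermediate-value-theorem argument that the paper invokes without spelling out when it writes that the corollary follows ``from (i) of Theorem \ref{th:bif} and the continuity of $\la(\mu)$.'' The extraction of a strictly increasing interlacing sequence, the choice of $\e=\frac12\min_{1\le j\le N+1}|\la(\mu^{(j)})-\la^*|$, and the observation that the parametrization $u(\mu,0)=\mu$ makes distinct IVT-preimages correspond to genuinely distinct solutions are all sound and are precisely what is meant by the paper's one-line proof.
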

Moreover, (ii) and (iii) above describe interesting oscillation behaviors of the graph of the solution  $u(\mu,\cdot)$ around that of the singular solution $U^*$ for large $\mu>0$. (ii) gives the oscillation of the derivative at the boundary and (iii) is the oscillation of the graph proved by the infinite concentration behavior around the origin.

We note that the previous theorem is a consequence of the analysis of the corresponding shooting type problem \eqref{pv} below. We will see that (i) and (ii) are the direct consequences of the two kinds of the infinite boundary (or zero point) oscillations (i)' and (ii)' in Lemma \ref{lem:bif2}. An interesting remark is that from the view point of \eqref{pv}, we obtain a reasonable global picture of  our infinite concentration and oscillation phenomena as the climbing-up  behavior of an infinite sequence of bumps from the bottom to the top. We refer the reader to the note  under Remark \ref{rmk:bifa} for the detail. 
     
We lastly remark on some additional facts on the asymptotic behavior of $(\la(\mu),u(\mu,\cdot))$ as $\mu\to \infty$ based on  the results in our series and \cite{Ku}. First, our infinite concentration estimate  \eqref{eq:sup1} yields the divergence of the energy
\begin{equation}\label{eq:diven}
\lim_{\mu\to \infty}\int_0^1 \la(\mu)f'(u(\mu,r))rdr=\infty.
\end{equation}
We point out that this is implicitly shown in the proof of (D)  of Theorem 1.1 in \cite{Ku} in view of the key estimates Lemma 4.2 and Proposition 4.3 there. Moreover, one notices that Lemma 4.2 there with the fact \eqref{eq:diven} directly proves the divergence of the Morse index of $u(\mu,\cdot)$ as $\mu\to \infty$. Hence we may say our approach gives another proof of the key fact \eqref{eq:diven} via the construction of the infinite sequence of bubbles. Moreover, we note that (B) of Theorem 1.1 shows the convergence of $(\la(\mu_n),u(\mu_n,\cdot))$ to a singular solution for a suitable sequence $(\mu_n)$ of positive values.  See Theorem 1.1 in \cite{Ku} for the detail. We also refer the reader to Lemma 2.13 in \cite{N3} for the connection between our setting and that in \cite{Ku}. 

Now, let us show an application of the previous theorem. Then our aim becomes to ensure the existence of solutions $(\la,U)$ of \eqref{eq:sg}  such that $U$ satisfies the condition (C).  Fortunately, the existence of such solutions is recently proved by \cite{FIRT} under a related generalized setting. To adapt our setting to  ($f_1$), $(f_2)$, $(f_3)$, and Lemma 4.1 in that  paper, we introduce the next condition. 
\begin{enumerate}
\item[(H3)] (H1) holds true with $q\in[1,2)$. Moreover, $g\in C^5([t_0,\infty))$, $g'''(t)>0$ for all $t\ge t_0$, and there exists a value $\hat{q}\in[0,\infty]$ such that 
\begin{equation}\label{g3}
\lim_{t\to \infty}\frac{g''(t)^2}{g'(t)g'''(t)}=\hat{q}
\end{equation}
and
\begin{equation}\label{g4}
\lim_{t\to \infty}\frac{g''''(t)}{g''(t)^2}=0.
\end{equation}
Moreover, we have that  
\begin{equation}\label{g5}
\lim_{t\to \infty}\sqrt{g(t)}\left(q-\frac{g'(t)^2}{g(t)g''(t)}\right)=0,
\end{equation}
\begin{equation}\label{g7}
\lim_{t\to \infty}\sqrt{g(t)}\left(\hat{q}-\frac{g''(t)^2}{g'(t)g'''(t)}\right)=0,
\end{equation}
and
\begin{equation}\label{g6}
\lim_{t\to \infty}\sqrt{g(t)}\frac{g''''(t)}{g''(t)^2}=0.
\end{equation}
In addition, $g$ satisfies all the assumption for the function $a$ in Lemma 4.1 in \cite{FIRT}. 
\end{enumerate}
We will  see that $\hat{q}=(2-q)^{-1}$ by Lemma \ref{lem:hatq} below. The former assumptions \eqref{g3} and \eqref{g4} confirm  their ($f_1$) and ($f_2$) and the latter (stronger) ones \eqref{g5}, \eqref{g7}, and \eqref{g6} give $(f_3)$. Under this condition, we obtain the desired oscillation and existence results.  
\begin{theorem}\label{th:bif2} Assume (H3), $f\in C^2([0,\infty))$ with $f>0$ on $(0,\infty)$, and $h\equiv 1$. If $q=1$, we additionally suppose  (H2). 
 Then, there exists a solution $(\la,U)=(\la^*,U^*)$ of \eqref{eq:sg} such that $U^*$ satisfies the condition (C). In particular, letting $(\la(\mu),u(\mu,\cdot))$ be the solutions $C^1$-curve of \eqref{p} as above, we have that all the conclusions  (i), (ii), and (iii) in Theorem \ref{th:bif} hold true with these $(\la(\mu),u(\mu,\cdot))$ and $(\la^*,U^*)$. 
\end{theorem}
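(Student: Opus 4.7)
The plan is to reduce the statement to the application of Theorem \ref{th:bif}: once we exhibit a solution $(\la^*, U^*)$ of \eqref{eq:sg} whose singular profile $U^*$ verifies condition (C), the three assertions (i), (ii), (iii) follow immediately from that theorem. So essentially all the work lies in producing $U^*$ and verifying the two asymptotic inequalities that make up (C).

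For the construction I would invoke the existence theory of \cite{FIRT}. The hypothesis (H3) is tailored precisely for this: as the excerpt notes, \eqref{g3} and \eqref{g4} yield their ($f_1$) and ($f_2$), the strengthened conditions \eqref{g5}, \eqref{g7}, \eqref{g6} yield ($f_3$), and the remaining structural requirements on the function $a$ in their Lemma 4.1 are included in (H3) by hypothesis. Lemma \ref{lem:hatq} supplies the algebraic identity $\hat{q} = 1/(2-q)$ that reconciles the two exponent conventions. With these compatibilities in hand, \cite{FIRT} furnishes a singular solution $(\la^*, U^*)$ of \eqref{eq:sg} together with a quantitative asymptotic expansion of $U^*(r)$ as $r \to 0^+$.

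Next, to verify (C), I would translate this expansion into the $g$-language. I expect the expansion to imply
\[
g(U^*(r)) + \log g'(U^*(r)) = 2\log{\tfrac{1}{r}} - \log\log{\tfrac{1}{r}} + O(1) \quad \text{as } r \to 0^+,
\]
where the $-\log\log(1/r)$ arises as the net correction after cancellation of the sub-leading terms in $g(U^*)$ and $\log g'(U^*)$ (a calculation one can verify directly in model cases such as $g(u) = u^p$ via a matched-asymptotic expansion applied to the ODE \eqref{eq:sg}). The lower bound in (C), $\alpha\log(1/r) \le g(U^*(r))$ for $\alpha \in (0,2)$, is then immediate from the leading-order statement $g(U^*(r)) = (2 + o(1))\log(1/r)$. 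The upper bound $g(U^*(r)) + \log g'(U^*(r)) \le 2\log(1/r) + \beta$ for arbitrary $\beta \in \mathbb{R}$ follows because the displayed asymptotic forces $g(U^*(r)) + \log g'(U^*(r)) - 2\log(1/r) \to -\infty$, which is much stronger than required.

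The principal obstacle is the bookkeeping in the second step: translating the expansion of \cite{FIRT}, stated in their $f$-centric variables with the exponent $\hat q$, into the explicit form displayed above in terms of $g$ and $g'$, and confirming that the correction term genuinely has the sign and magnitude claimed rather than being neutralized by hidden remainders. This is exactly where the strengthened hypotheses \eqref{g5}, \eqref{g7}, \eqref{g6}, each carrying a $\sqrt{g(t)}$ factor, intervene: they force the sub-leading terms in the expansion of $U^*$ to have the controlled sign and size that makes the $-\log\log(1/r)$ correction robust across both the $q \in (1,2)$ and $q = 1$ regimes. With (C) established, Theorem \ref{th:bif} applied to $(\la^*, U^*)$ delivers (i), (ii), (iii) without any further work.
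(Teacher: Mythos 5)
Your proposal is correct and follows essentially the same route as the paper: reduce to Theorem~\ref{th:bif} by exhibiting a singular solution satisfying~(C), obtain that solution from the construction in~\cite{FIRT} after checking that (H3) gives their $(f_1)$–$(f_3)$, and then read off the two inequalities in~(C) from the asymptotic $g(U^*(r))+\log g'(U^*(r))=2\log(1/r)-\log\log(1/r)+O(1)$ (which the paper encodes as $\log(1/w(r))+o(1)$ with $w(r)=\tfrac{B}{4}r^2(\log(1/r^2)+1)$). The paper packages this into Lemma~\ref{lem:bif3}, together with the small extension Lemma~\ref{lem:cnstsg} to continue the local singular solution of~\cite{FIRT} to a genuine solution of~\eqref{eq:sgv}; your sketch glosses over that extension step and slightly misattributes the $-\log\log(1/r)$ correction (it comes from the explicit profile $w(r)$, not from a cancellation between $g(U^*)$ and $\log g'(U^*)$), but these are presentational details rather than gaps in the argument.
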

\begin{corollary}\label{cor:bif3} Assume as in the previous theorem. Then, the same conclusion with that in Corollary \ref{cor:bif} holds true. 
\end{corollary}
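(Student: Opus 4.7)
The statement is a direct concatenation of Theorem \ref{th:bif2} and Corollary \ref{cor:bif}: no new mechanism is required. My plan is simply to observe that the hypotheses of Corollary \ref{cor:bif3} coincide verbatim with those of Theorem \ref{th:bif2}, and that Theorem \ref{th:bif2} produces exactly the input required by Corollary \ref{cor:bif}, namely a singular solution $(\la,U)=(\la^*,U^*)$ of \eqref{eq:sg} whose $U^*$ satisfies the condition (C). In particular, (H3) implies (H1) with $q\in[1,2)$ by its very first line, $h\equiv 1$ is assumed, $f\in C^2([0,\infty))$ with $f>0$ on $(0,\infty)$, and (H2) is imposed whenever $q=1$. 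Thus every hypothesis appearing in Theorem \ref{th:bif} (and hence in Corollary \ref{cor:bif}) is in force.

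Granting this, the conclusion of Corollary \ref{cor:bif3} is literally the conclusion of Corollary \ref{cor:bif} applied to the solutions $C^1$-curve $\mu\mapsto(\la(\mu),u(\mu,\cdot))$ and the singular solution $(\la^*,U^*)$ provided by Theorem \ref{th:bif2}: for every $N\in\mathbb{N}$ there is an $\e>0$ such that \eqref{p} possesses at least $N$ distinct solutions for all $\la\in(\la^*-\e,\la^*+\e)$, and there is an infinite sequence $(u_n)$ of solutions of \eqref{p} with $\la=\la^*$ satisfying $u_n(0)\to\infty$. Neither assertion needs any additional argument once Corollary \ref{cor:bif} is in hand.

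Consequently, there is no genuine obstacle in the proof of Corollary \ref{cor:bif3}; the substantive content has already been absorbed by the two results it combines. The heavy lifting in Theorem \ref{th:bif2} is the construction of $U^*$ (via the existence result of \cite{FIRT} under their $(f_1),(f_2),(f_3)$, which is why the stronger conditions \eqref{g5}, \eqref{g7}, \eqref{g6} are imposed in (H3)) together with the verification that $U^*$ satisfies (C); the heavy lifting in Corollary \ref{cor:bif} is the extraction of multiplicity near $\la^*$ and of the infinite sequence at $\la^*$ from the infinite oscillation of $\la(\mu)$ around $\la^*$ as $\mu\to\infty$ along the $C^1$ solutions curve. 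Assembling these two yields the desired statement at once.
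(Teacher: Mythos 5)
Your proposal is correct and matches the paper's reasoning: Theorem \ref{th:bif2} supplies a singular solution $(\la^*,U^*)$ with $U^*$ satisfying (C), so all hypotheses of Theorem \ref{th:bif} (hence of Corollary \ref{cor:bif}) hold, and the conclusion follows from the oscillation of $\la(\mu)$ around $\la^*$ together with the continuity of $\la(\mu)$. The paper's proof is just a one-line version of this, citing assertion (i) of the theorem plus continuity.
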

We can easily check that if  $f\in C^2([0,\infty))$ with $f>0$ on $(0,\infty)$ satisfies (H0) with $h\equiv 1$ and 
\begin{enumerate}
\item[(a)] $f(t)=t^me^{t^p+ct^{\bar{p}}}$ for any given constants $c,m\in \mathbb{R}$, $p>2$, and $0<\bar{p}<p/2$ or 
\item[(b)] $f(t)=e^{e^{t}+w(t)}$ where $w(t)$ is any given polynomial function
\end{enumerate}
for all $t\ge t_0$, then all the assumptions of the previous theorem are satisfied. In fact, the former conditions are confirmed by direct computations. (We refer the reader to Lemma 2.11 in \cite{N3} for a useful condition to check (H1) with $q=1$.) For the final condition concerning Lemma 4.1 in \cite{FIRT}, we give a note in Remark \ref{rmk:41} below for the reader's convenience. Consequently, we get  all the assertions in  the theorem and corollary for both examples. The Trudinger-Moser type growth (a) in the cases $m=0$ with $c=1$ and $m\not=0$ with $c=0$ is treated in  \cite{N2} (See Remark 5.11 there) while multiple exponential case (b) is completely new. To our best knowledge, this paper is the first work which shows the infinite oscillation of the bifurcation diagram around a suitable value and the existence of infinitely many solutions of \eqref{p} with the multiple exponential growth.  

Moreover, we emphasize that our sufficient condition in Theorem \ref{th:bif} is applicable to the other  new cases,  for example, the Trudinger-Moser type case $f(t)=e^{t^p(\log{t})^l}$ and the multiple exponential one $f(t)=\exp_k{(t^m(\log{t})^l)}$ noted in Subsection \ref{sub:ge}. However, at the moment, the assumptions \eqref{g5}, \eqref{g7}, and \eqref{g6}, i.e., $(f_3)$ in \cite{FIRT}, restrict the examples which allow the construction of singular solutions in \cite{FIRT}. See Remark 2.2 and Section 4 in \cite{FIRT} for their explanations for the examples. On the other hand,  the existence  of singular solutions is confirmed (without asymptotic formulas) under the general setting in \cite{Ku}. See Theorem 1.1 there. We believe that those singular solutions satisfy the  asymptotic condition (C) and thus, Theorem \ref{th:bif} should enable us to show the desired  assertions under our basic condition (H1). The answer is left for our future works.   

The rest of this paper is devoted to the proof of the theorems above.   
\section{Preliminaries}\label{sec:pre}
For the proof of our main theorem, we recall some properties of our generalized nonlinearity, key identities, and notes on the energy recurrence formulas proved in  \cite{N3}.
\subsection{Properties of generalized exponential growth}\label{sub:geg2}
We first collect all the basic properties of our generalized nonlinearity. All the  proofs are given in Subsection 2.1 in \cite{N3}. The next lemma shows the monotonicity of $g$ and $g'$ and also the basic asymptotic behavior of them.
\begin{lemma}\label{lem:monog} Assume (H1). Then we have that  $g(t)$ and $g'(t)$ are strictly increasing  for all $t\ge t_0$ and that $g(t)\to \infty$ and $g'(t)\to \infty$ as $t\to \infty$.  Moreover, we get
\begin{equation}\label{eq:lgg}
\lim_{t\to \infty}\frac{\log{g'(t)}}{g(t)}=0.
\end{equation}
\end{lemma}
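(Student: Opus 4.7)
The strict monotonicity assertions are immediate: (H1)(i) supplies $g'(t)>0$ and $g''(t)>0$ on $[t_0,\infty)$, so $g$ and $g'$ are both strictly increasing there. As an incidental consequence, $g'(t)\ge g'(t_0)>0$, which gives the crude linear lower bound $g(t)\ge g(t_0)+g'(t_0)(t-t_0)\to\infty$, handling $g(t)\to\infty$.

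For the remaining two statements my plan is to first establish the auxiliary ratio
\[
\lim_{t\to\infty}\frac{\log g'(t)}{\log g(t)}=\frac{1}{q},
\]
from which both $g'(t)\to\infty$ and \eqref{eq:lgg} will follow painlessly. To obtain it, I would recast \eqref{g1} in the asymptotically integrable form
\[
\frac{g''(t)}{g'(t)}=\left(\frac{1}{q}+o(1)\right)\frac{g'(t)}{g(t)} \qquad (t\to\infty),
\]
fix an arbitrary $\e>0$, choose $T\ge t_0$ large enough that the $o(1)$ term is dominated by $\e$ on $[T,\infty)$, and integrate the resulting two-sided inequality over $[T,t]$. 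The middle integral is $\log(g'(t)/g'(T))$, while the outer ones are $(1/q\pm\e)\log(g(t)/g(T))$. Since $g(t)\to\infty$, dividing through by $\log g(t)$ and sending $t\to\infty$, then $\e\to 0^+$, yields the claimed ratio.

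With this in hand, $\log g'(t)\sim (1/q)\log g(t)\to\infty$ forces $g'(t)\to\infty$, and the decomposition
\[
\frac{\log g'(t)}{g(t)}=\frac{\log g'(t)}{\log g(t)}\cdot\frac{\log g(t)}{g(t)}
\]
delivers \eqref{eq:lgg}, the first factor tending to $1/q$ and the second to $0$ because $g(t)\to\infty$. The only step with any real content is the integration of the asymptotic relation for $g''/g'$, which uses only \eqref{g1} (neither \eqref{g2} nor (H1)(ii) is needed), so I do not anticipate any genuine obstacle beyond being careful that the bound on the $o(1)$ term is chosen uniformly on $[T,\infty)$ before integrating.
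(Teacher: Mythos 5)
Your proof is correct, and the route is the natural one for this statement: the monotonicity and $g\to\infty$ are immediate from (H1)(i), and the key step is integrating the asymptotic differential relation $\frac{g''}{g'}=\bigl(\frac1q+o(1)\bigr)\frac{g'}{g}$ that comes directly out of \eqref{g1} to get $\log g'(t)/\log g(t)\to 1/q$, from which both $g'\to\infty$ and \eqref{eq:lgg} follow by the factorization $\frac{\log g'}{g}=\frac{\log g'}{\log g}\cdot\frac{\log g}{g}$. You are right that only \eqref{g1} is used, not \eqref{g2} or (H1)(ii); the one small bookkeeping point worth making explicit is that, before integrating, $T$ must be taken large enough that $g(T)>0$ so that $\log g$ is defined on $[T,\infty)$, which is always possible since $g\to\infty$.
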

The next lemma proves that $q$ is the H\"{o}lder conjugate exponent of $p$. 
\begin{lemma}\label{lem:pq} We suppose (H1). Then, we have that $1/p+1/q=1$ where we regarded $1/\infty=0$.
\end{lemma}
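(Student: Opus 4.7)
The plan is to extract both asymptotic identities from a single L'H\^opital argument. First I would note the purely algebraic identity
\[
\frac{tg''(t)}{g'(t)} = \frac{tg'(t)}{g(t)} \cdot \frac{g(t)g''(t)}{g'(t)^2},
\]
so that \eqref{g1} and \eqref{g2} together give $\lim_{t\to\infty} tg''(t)/g'(t) = p/q$, with the obvious interpretation when $p=\infty$.

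Next, set $K(t) := tg'(t) - g(t)$, so that $K'(t) = tg''(t)$. Since $g(t) \to \infty$ by Lemma \ref{lem:monog} and
\[
\frac{K'(t)}{g'(t)} = \frac{tg''(t)}{g'(t)} \longrightarrow \frac{p}{q},
\]
L'H\^opital's rule in its $\infty/\infty$ form (which only requires the denominator to tend to infinity) delivers $\lim_{t\to\infty} K(t)/g(t) = p/q$. On the other hand, directly from \eqref{g2},
\[
\frac{K(t)}{g(t)} = \frac{tg'(t)}{g(t)} - 1 \longrightarrow p - 1.
\]
Equating the two evaluations yields the key identity $p-1 = p/q$.

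The conclusion then splits according to which admissible regime holds. If $q \in (1,\infty)$, so that $p \in (0,\infty)$ by the pairing constraint in (H1)(i), rearranging $p-1 = p/q$ to $pq = p+q$ produces $1/p + 1/q = 1$ (and incidentally forces $p>1$). If instead $q=1$, then $p = p-1$ is impossible for finite $p$, so $p$ must equal $\infty$, and with the convention $1/\infty = 0$ the desired identity $1/p + 1/q = 1$ again holds.

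The only point of care is the application of L'H\^opital when $p/q=\infty$, which occurs exactly for $(q,p)=(1,\infty)$; there both $K(t) \to \infty$ and $K'(t)/g'(t) \to \infty$, so the standard $\infty/\infty$ statement still applies and the argument produces the trivially consistent $\infty = \infty$. No serious calculation is involved; the whole content is the one L'H\^opital step above.
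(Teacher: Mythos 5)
Your proof is correct. The algebraic identity $\frac{tg''(t)}{g'(t)} = \frac{tg'(t)}{g(t)} \cdot \frac{g(t)g''(t)}{g'(t)^2}$ is right, $K'(t) = tg''(t)$, the one-sided ($\infty/\infty$, denominator $\to\infty$) form of L'H\^opital applies since $g(t)\to\infty$ and $g'>0$ near infinity (Lemma \ref{lem:monog} and (H1)(i)), and comparing the two evaluations of $\lim K(t)/g(t)$ gives $p-1 = p/q$, which rearranges to $1/p + 1/q = 1$ when $p$ is finite and forces $p=\infty$ when $q=1$. The paper defers this proof to Subsection 2.1 of \cite{N3}, so a literal comparison is not possible here, but the L'H\^opital argument you give is the standard route for the generalized H\"older conjugate identity in the Dupaigne--Farina framework and is almost certainly what the companion paper does.
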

The next two lemmas are keys for the scaling argument in \cite{N3}. The latter assertion of (ii) in (H1) is used here.   
\begin{lemma}\label{lem:g2} Assume (H1). Then, for any value $M>0$, we get that 
\[
\lim_{t\to \infty}\sup_{-M\le y\le M}\left|\frac{g'\left(t+\frac{y}{g'(t)}\right)}{g'(t)}-1\right|=0.
\]
\end{lemma}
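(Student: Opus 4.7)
The plan is to apply the mean value theorem to $g'$ and reduce the lemma to showing that $g''$ stays negligible compared to $g'(t)^2$ on a shrinking window around $t$. For each $y\in[-M,M]$ and $t$ so large that $t + y/g'(t) \geq t_0$, there exists $\xi=\xi(t,y)$ between $t$ and $t + y/g'(t)$ with
\[
\frac{g'(t+y/g'(t))}{g'(t)} - 1 = \frac{y\,g''(\xi)}{g'(t)^2}.
\]
By Lemma \ref{lem:monog}, $g'(t)\to\infty$, so the shift $y/g'(t)$ is $o(1)$ uniformly in $y$, and $\xi$ lies in the shrinking window $I(t):=[t-M/g'(t),\,t+M/g'(t)]$. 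It therefore suffices to show $\sup_{\xi\in I(t)}g''(\xi)/g'(t)^2\to 0$.

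The key input is (H1)(i): from $g'(s)^2/(g(s)g''(s))\to q\ge 1$, for any $\varepsilon>0$ and all large $s$ one has $g''(s)\le C g'(s)^2/g(s)$ with $C=(1+\varepsilon)/q$. Since $g,g'$ are strictly increasing (Lemma \ref{lem:monog}), for $\xi\in I(t)$,
\[
\frac{g''(\xi)}{g'(t)^2} \;\le\; \frac{C\,B(t)^2}{g(t - M/g'(t))}, \qquad B(t):=\frac{g'(t+M/g'(t))}{g'(t)}\ge 1,
\]
and the denominator tends to infinity. The lemma thus reduces to bounding $B(t)$. Integrating $(\log g')'=g''/g'\le C g'/g$ over $[t,t+M/g'(t)]$ together with $g(t+M/g'(t))-g(t)\le g'(t+M/g'(t))\cdot M/g'(t) = M B(t)$ yields the self-consistent inequality
\[
B(t) \;\le\; \left(1+\frac{M B(t)}{g(t)}\right)^{\!C}.
\]

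If $q>1$, I would choose $\varepsilon$ so small that $C<1$; for $g(t)\ge M$ and $B(t)\ge 1$, $1+MB(t)/g(t)\le 2B(t)$ gives $\log B(t)\le C\log 2 + C\log B(t)$, hence $\log B(t)\le C(\log 2)/(1-C)$, a uniform bound, and the symmetric argument on $y<0$ (together with $g'$ being nondecreasing) handles both signs of the shift. The case $q=1$ is the main obstacle, since one is forced to take $C>1$ and the algebraic trick collapses. Here I would invoke the latter assertion of (H1)(ii): writing $f=\exp_k(\hat g)$ with $\hat g'/\hat g$ nonincreasing gives $g=\exp_{k-1}(\hat g)$ and the explicit factorisation $g'(t)=\hat g'(t)\prod_{j=1}^{k-1}\exp_j(\hat g(t))$. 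Setting $h_j:=\exp_j\circ\hat g$ and $\psi_j:=h_j'/h_j$, the recursion $\psi_j = h_{j-1}\psi_{j-1}$ yields $\psi_j(t)/g'(t) = 1/\prod_{i=j}^{k-1}h_i(t)\le 1/g(t)\to 0$ for every $j$. The bound $\hat g''\le \hat g'^{\,2}/\hat g$ supplied by the nonincreasingness of $\hat g'/\hat g$ allows one to control $\sup_{I(t)}\psi_j$ by $\psi_j(t)$, so termwise $\log(h_j(s)/h_j(t))\to 0$ uniformly on $I(t)$ and $B(t)\to 1$ directly. Inserting the bound on $B(t)$ into the displayed estimate for $g''(\xi)/g'(t)^2$ completes the proof in both cases, and the hardest step is precisely the marginal $q=1$ analysis where the structural hypothesis from (H1)(ii) is indispensable.
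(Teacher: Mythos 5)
Your MVT reduction, the self-consistency bound $B(t)\le\bigl(1+MB(t)/g(t)\bigr)^C$ for $q>1$, and the use of the $\exp_k$-factorisation from (H1)(ii) for $q=1$ are all the right ingredients, and the split matches the paper's remark that (H1)(ii) is a technical hypothesis needed precisely for this lemma in the $q=1$ case. I cannot compare line-by-line with the paper's proof (it lives in the companion paper \cite{N3}), but the strategy is sound; the one place where your write-up is thinner than it should be is the $q=1$ step. You assert that the bound $\hat g''\le \hat g'^2/\hat g$ ``allows one to control $\sup_{I(t)}\psi_j$ by $\psi_j(t)$,'' but this needs an actual induction on the level $j$: writing $\psi_j'/\psi_j=\hat g''/\hat g'+\sum_{i<j}\psi_i$ and using $\hat g''/\hat g'\le\hat g'/\hat g$ gives $\log\bigl(\psi_j(s)/\psi_j(t)\bigr)\le\int_t^s\bigl(\hat g'/\hat g+\sum_{i<j}\psi_i\bigr)$ for $s>t$, and one must first establish $\int_t^{t+\tau}\psi_i\to 0$ for $i<j$ (base case $j=1$ via the nonincreasingness of $\hat g'/\hat g$ and $\hat g(t+\tau)/\hat g(t)\to1$) before concluding for $j$. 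Also, the reduction of the left half $y\in[-M,0]$ to the right half via monotonicity of $g'$ (writing $s=t-M/g'(t)$ and noting $t<s+M/g'(s)$) is mentioned only for $q>1$ but is equally needed when $q=1$, since the nonincreasingness hypothesis gives you a one-sided, not two-sided, control on $\hat g'$. Once these details are supplied, the argument closes.
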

\begin{lemma}\label{lem:g1} 
 Suppose (H1). Then for any value $M>0$, we obtain that
\[
\lim_{t\to \infty}\sup_{-M\le y\le M} \left|g\left(t+\frac{y}{g'(t)}\right)-g(t)-y\right|=0.
\]
In particular, we have that
\[
\lim_{t\to \infty}\sup_{-M\le y\le M} \left|\frac{f\left(t+\frac{y}{g'(t)}\right)}{f(t)}-e^{y}\right|=0=\lim_{t\to \infty}\sup_{-M\le y\le M} \left|\frac{f'\left(t+\frac{y}{g'(t)}\right)}{f'(t)}-e^{y}\right|.
\]
\end{lemma}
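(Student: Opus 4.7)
The plan is to deduce Lemma \ref{lem:g1} directly from Lemma \ref{lem:g2} (which was established just above) using the mean value theorem, and then to derive the two ratio statements as easy corollaries.

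First, for the main identity, I would apply the mean value theorem to $g$ on the interval joining $t$ and $t+y/g'(t)$: for each $t$ large enough and each $y\in[-M,M]$, there exists $\theta=\theta(t,y)\in[0,1]$ such that
\[
g\!\left(t+\frac{y}{g'(t)}\right)-g(t) \;=\; g'\!\left(t+\frac{\theta y}{g'(t)}\right)\cdot \frac{y}{g'(t)}.
\]
Writing $y'=\theta y\in[-M,M]$ and subtracting $y$ from both sides, I get
\[
\left|g\!\left(t+\frac{y}{g'(t)}\right)-g(t)-y\right| \;=\; |y|\,\left|\frac{g'\!\left(t+\frac{y'}{g'(t)}\right)}{g'(t)}-1\right| \;\le\; M \sup_{|y'|\le M}\left|\frac{g'\!\left(t+\frac{y'}{g'(t)}\right)}{g'(t)}-1\right|.
\]
Taking the supremum over $y\in[-M,M]$ on the left and invoking Lemma \ref{lem:g2} on the right yields the first assertion.

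For the $f$-ratio, since $f=e^g$ on $[t_0,\infty)$ by (H0), I have
\[
\frac{f\!\left(t+\frac{y}{g'(t)}\right)}{f(t)}-e^{y} \;=\; e^{y}\!\left(\exp\!\left\{g\!\left(t+\tfrac{y}{g'(t)}\right)-g(t)-y\right\}-1\right).
\]
The first part together with the uniform continuity of $s\mapsto e^s-1$ on any bounded interval (and the uniform boundedness of $e^y$ for $y\in[-M,M]$) gives uniform convergence to $0$. For the $f'$-ratio, I use $f'=g'\,e^g = g'\,f$ to write
\[
\frac{f'\!\left(t+\frac{y}{g'(t)}\right)}{f'(t)} \;=\; \frac{g'\!\left(t+\frac{y}{g'(t)}\right)}{g'(t)}\cdot \frac{f\!\left(t+\frac{y}{g'(t)}\right)}{f(t)},
\]
so the desired uniform convergence to $e^y$ follows from Lemma \ref{lem:g2} (first factor $\to 1$ uniformly) and the previous step (second factor $\to e^y$ uniformly), together with the uniform boundedness of both factors on $[-M,M]$ for $t$ large.

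The argument has no real obstacle since Lemma \ref{lem:g2} is already available; the only subtlety is making sure that the intermediate point $t+\theta y/g'(t)$ produced by the mean value theorem is still of the form required by Lemma \ref{lem:g2} with a parameter in $[-M,M]$, which is taken care of by passing to the supremum over $y'\in[-M,M]$ before estimating.
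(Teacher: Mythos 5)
Your argument is correct and is the natural route: the first assertion follows from the mean value theorem combined with Lemma \ref{lem:g2} (whose proof precedes it precisely so it can be used here), and the ratio statements for $f=e^g$ and $f'=g'e^g$ follow by elementary algebra and uniform continuity. This matches the standard proof; note the paper itself defers the details of Lemmas \ref{lem:monog}--\ref{lem:kap} to Subsection 2.1 of \cite{N3}, but your derivation is exactly what that omitted argument does.
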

We give the next two lemmas  used for computing the ratio of the growth $g$ at two blowing-up points. 
 \begin{lemma}\label{lem:g3} Assume (H1) with $q>1$ and take any sequences $(s_n),(t_n)\subset (0,\infty)$ of numbers and a value $x\in [0,1]$ such that $s_n<t_n$ for all $n\in \mathbb{N}$ and $s_n\to \infty$ and  $s_n/t_n\to x$ as $n\to \infty$. Then we have that
\[
\lim_{n\to \infty}\frac{g(s_n)}{g(t_n)}=x^p=\lim_{n\to \infty}\left(\frac{g'(s_n)}{g'(t_n)}\right)^q.
\]
\end{lemma}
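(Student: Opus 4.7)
The plan is to convert both asymptotic identities into integrations of logarithmic derivatives on $[s_n,t_n]$. Since $q>1$ implies $p<\infty$, Lemma \ref{lem:pq} gives $1/p+1/q=1$, equivalently $q(p-1)=p$, which I will use to identify the two limits. Moreover, by Lemma \ref{lem:monog} and (H1)(i), $g(t),g'(t),g''(t)>0$ for all large $t$, so $(\log g)'=g'/g$ and $(\log g')'=g''/g'$ are positive and continuously integrable on the eventual interval $[s_n,t_n]$.

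First I would bootstrap \eqref{g1} and \eqref{g2} to the additional asymptotic $tg''(t)/g'(t)\to p-1$ via
\[
\frac{tg''(t)}{g'(t)}=\frac{tg'(t)}{g(t)}\cdot\frac{g(t)g''(t)}{g'(t)^2}\longrightarrow p\cdot\frac{1}{q}=p-1,
\]
where the last equality uses Lemma \ref{lem:pq}. This delivers the two parallel asymptotics $tg'(t)/g(t)\to p$ and $tg''(t)/g'(t)\to p-1$, and I handle them simultaneously. Given any $\e\in(0,p-1)$ (possible since $p>1$), there exists $T$ such that for all $t\ge T$,
\[
\frac{p-\e}{t}\le\frac{g'(t)}{g(t)}\le\frac{p+\e}{t},\qquad \frac{p-1-\e}{t}\le\frac{g''(t)}{g'(t)}\le\frac{p-1+\e}{t}.
\]
Since $s_n\to\infty$, one has $T\le s_n<t_n$ for all large $n$, and integrating the first chain from $s_n$ to $t_n$ gives
\[
\left(\frac{s_n}{t_n}\right)^{p+\e}\le\frac{g(s_n)}{g(t_n)}\le\left(\frac{s_n}{t_n}\right)^{p-\e},
\]
with the completely analogous inequality for $g'(s_n)/g'(t_n)$ obtained from the second chain (with $p$ replaced by $p-1$).

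Letting $n\to\infty$ and then $\e\to0^+$, when $x\in(0,1]$ the ratios $g(s_n)/g(t_n)$ and $g'(s_n)/g'(t_n)$ are sandwiched into the values $x^p$ and $x^{p-1}$ respectively. When $x=0$, the lower bounds are trivial while the upper bounds vanish, since $p-\e>0$ and $p-1-\e>0$ for the chosen $\e$; hence both ratios tend to $0$, matching $x^p$ and $x^{p-1}$. Finally, raising the second limit to the $q$-th power and invoking $q(p-1)=p$ from Lemma \ref{lem:pq} converts $x^{p-1}$ into $x^p$, establishing the two asserted equalities.

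The argument is essentially a Karamata-type integration of a regularly varying logarithmic derivative; the only subtlety is the boundary case $x=0$, which is handled by the one-sided upper bound and the positivity $p-1-\e>0$. I do not anticipate any serious obstacle, since the bootstrap step and the integration are both routine once the bounds from \eqref{g1} and \eqref{g2} are in place.
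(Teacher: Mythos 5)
Your proof is correct. The bootstrap $tg''(t)/g'(t)=\frac{tg'(t)}{g(t)}\cdot\frac{g(t)g''(t)}{g'(t)^2}\to p\cdot\frac1q=p-1$ is valid, the two-sided bounds on $(\log g)'$ and $(\log g')'$ integrate exactly to the sandwich you state, the endpoint case $x=0$ is handled by the positivity of the exponents $p-\e$ and $p-1-\e$ (ensured by your choice $\e<p-1$), and the identity $q(p-1)=p$ from Lemma~\ref{lem:pq} reconciles the two limits. This integration-of-logarithmic-derivative (Karamata-type) argument is the natural route for \eqref{g2} and is essentially the standard proof.
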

\begin{lemma}\label{lem:g4} Suppose (H1) with $q=1$. Then, for any sequences  $(t_n),(x_n)\subset (0,\infty)$ of values and a number  $x_0\in [0,\infty)$ such that $t_n\to \infty$ and $x_n\to x_0$ as $n\to \infty$,  we have that
\[
\lim_{n\to \infty}\frac{g\left( t_n-x_n\frac{g(t_n)}{g'(t_n)}\right)}{g(t_n)}=e^{-x_0}=\lim_{n\to \infty}\frac{g'\left( t_n-x_n\frac{g(t_n)}{g'(t_n)}\right)}{g'(t_n)}.
\]
\end{lemma}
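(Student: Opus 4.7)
The plan is to recast both ratios as integrals of logarithmic derivatives and control those integrals via (H1). Set $\phi(t):=g'(t)/g(t)$, $\psi(t):=tg'(t)/g(t)=t\phi(t)$, and
\[
\beta(t):=\frac{g(t)g''(t)}{g'(t)^{2}}-1,
\]
so that $\beta(t)\to 0$ as $t\to\infty$ by $q=1$. A direct computation gives $(\log g)'=\phi$, $(\log g')'=(1+\beta)\phi$, and $(\log\phi)'=\beta\phi$. Setting $\xi_{n}:=t_{n}-x_{n}g(t_{n})/g'(t_{n})=t_{n}(1-x_{n}/\psi(t_{n}))$, the problem reduces to showing
\[
I_{n}:=\int_{\xi_{n}}^{t_{n}}\phi(\tau)\,d\tau\to x_{0},
\]
because $\int_{\xi_n}^{t_n}(1+\beta)\phi=I_n+\int_{\xi_n}^{t_n}\beta\phi$ differs from $I_n$ by at most $\sup_{\tau\ge\xi_n}|\beta(\tau)|\cdot I_n=o(1)\cdot O(1)$, so the $g'$-assertion follows from the $g$-assertion once $(I_n)$ is shown to be bounded.

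For the upper bound on $I_n$ I would invoke the monotonicity half of (H1)(ii). Since $q=1$ forces $p=\infty$ by Lemma \ref{lem:pq}, $\psi(t_{n})\to\infty$, hence $\xi_{n}/t_{n}\to 1$ and $\xi_{n}\to\infty$, so all asymptotic hypotheses hold throughout $[\xi_n,t_n]$ for large $n$. Writing $\phi(\tau)=\psi(\tau)/\tau$ and using that $\psi$ is nondecreasing,
\[
I_{n}\le\psi(t_{n})\int_{\xi_{n}}^{t_{n}}\frac{d\tau}{\tau}=\psi(t_{n})\bigl(-\log(1-x_{n}/\psi(t_{n}))\bigr)=x_{n}+O(x_n^2/\psi(t_n))\to x_{0}.
\]
In particular $(I_n)$ is bounded in $n$.

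The lower bound, which is the main obstacle, is then obtained by a bootstrap. Feeding the boundedness of $I_n$ into the identity $(\log\phi)'=\beta\phi$ yields, for every $\eta_{n}\in[\xi_{n},t_{n}]$,
\[
\bigl|\log(\phi(\eta_{n})/\phi(t_{n}))\bigr|=\Bigl|\int_{\eta_n}^{t_n}\beta(\tau)\phi(\tau)\,d\tau\Bigr|\le\sup_{\tau\ge\xi_{n}}|\beta(\tau)|\cdot I_{n}\to 0,
\]
i.e.\ $\phi(\eta_n)/\phi(t_n)\to 1$ uniformly in $\eta_n\in[\xi_n,t_n]$. This asymptotic slow variation of $\phi$ upgrades the upper bound to the desired equality:
\[
I_{n}=\phi(t_{n})(t_{n}-\xi_{n})(1+o(1))=x_{n}(1+o(1))\to x_{0}.
\]
The $g$-claim follows, and the $g'$-claim follows from $(\log g')'=(1+\beta)\phi$ together with the $o(1)$ control on $\int_{\xi_n}^{t_n}\beta\phi$ already noted.

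The decisive step is precisely the bootstrap: monotonicity of $\psi$ alone gives only a one-sided bound on $I_n$ and slow variation of $\phi$ cannot be inferred from monotonicity; it is the combination of the a priori $O(1)$ bound on $I_n$ and the fact that $\beta\to 0$ by $q=1$ which upgrades the one-sided estimate to the two-sided asymptotic equality needed for both limits.
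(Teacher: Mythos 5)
Your proof is correct. The decomposition into (a) an a priori upper bound on $I_n=\int_{\xi_n}^{t_n}\phi$ from the monotonicity of $\psi=tg'/g$, (b) slow variation of $\phi$ on $[\xi_n,t_n]$ via $\beta=gg''/(g')^2-1\to 0$ and the boundedness of $I_n$, and (c) the resulting asymptotic equality $I_n\to x_0$, is sound; all intermediate uses of $\xi_n\to\infty$, $\psi(t_n)\to\infty$ (from $p=\infty$ when $q=1$) and uniform control of $\sup_{\tau\ge\xi_n}|\beta(\tau)|$ are justified, and the $g'$-ratio follows from the $g$-ratio since $\int_{\xi_n}^{t_n}\beta\phi=o(1)$. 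This matches the paper's indicated reliance on the nondecreasing property of $tg'/g$ from the former half of (H1)(ii). One remark: the bootstrap through the monotone upper bound is avoidable. Writing $\theta=g/g'$, one has $\theta'=1-gg''/(g')^2=-\beta\to 0$, so directly $|\theta(\tau)-\theta(t_n)|\le(t_n-\xi_n)\sup_{\tau\ge\xi_n}|\theta'|=o(1)\,x_n\theta(t_n)$ for $\tau\in[\xi_n,t_n]$, i.e. $\theta(\tau)=\theta(t_n)(1+o(1))$ uniformly; integrating $1/\theta$ then gives $I_n=(1+o(1))x_n$ without any a priori bound. Thus what is essential is $\beta\to 0$ (equivalently $q=1$) together with $\psi\to\infty$ from (H1)(i); the monotonicity of $\psi$ gives a more robust two-step argument but is not logically indispensable for this lemma as stated.
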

The former condition on (ii) in (H1) is applied for the proof of in the previous lemma. Finally, we give the next consequence of (H2).
\begin{lemma}\label{lem:kap} (H2) implies that
\[
\sup\{\la>0\ |\ \text{\eqref{p} admits a solution $u$}\}<\infty.
\]
\end{lemma}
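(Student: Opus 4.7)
The plan is to observe that (H2) forces $f$ to dominate a linear function from below on the positive half-line, and then to compare the resulting inequality with the principal Dirichlet eigenvalue of $-\Delta$ on the unit disc. Concretely, since $h:[0,1]\to(0,\infty)$ is continuous on a compact interval by (H0), the constant $h_{\min}:=\min_{[0,1]}h$ is strictly positive, and (H2) yields $c_0:=\inf_{t>0}f(t)/t>0$. Together these give the pointwise bound $h(|x|)f(u)\ge h_{\min}c_0\,u$ for any positive $u$.

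Given any solution $u$ of \eqref{p} corresponding to some parameter $\la>0$, I would lift $u$ to a radial positive classical solution on $\bar D$ of the differential inequality $-\Delta u\ge \la h_{\min}c_0\,u$ in $D$, with $u=0$ on $\partial D$. Let $\varphi_1>0$ denote the first Dirichlet eigenfunction of $-\Delta$ on $D$ with eigenvalue $\la_1>0$. Multiplying the inequality by $\varphi_1$ and integrating by parts (the boundary terms vanish since $u=\varphi_1=0$ on $\partial D$), I obtain
\[
\la_1\int_D u\varphi_1\,dx=\int_D(-\Delta\varphi_1)u\,dx=\int_D\varphi_1(-\Delta u)\,dx\ge \la h_{\min}c_0\int_D u\varphi_1\,dx.
\]
Since $u,\varphi_1>0$ in $D$, the positive factor $\int_D u\varphi_1\,dx$ may be cancelled, producing the uniform bound $\la\le \la_1/(h_{\min}c_0)$ independent of the particular solution.

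Taking the supremum over all admissible $\la$ yields the desired finite upper bound. There is essentially no obstacle: this is the classical Keller--Krasnoselskii comparison with the principal eigenvalue, and the only ingredient from the paper's hypotheses that enters is the linear lower bound from (H2); in particular, the supercritical growth built into (H1) plays no role. One could equivalently phrase the argument directly in the radial framework by using the first radial Dirichlet eigenfunction of the weighted ODE in place of $\varphi_1$, but the two-dimensional PDE formulation on $D$ is cleaner and makes the cancellation of $\int_D u\varphi_1\,dx$ transparent.
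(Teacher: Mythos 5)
Your argument is correct and is the standard first-Dirichlet-eigenvalue comparison that this lemma calls for: (H0) gives $h_{\min}>0$, (H2) gives the linear lower bound $f(u)\ge c_0 u$, and pairing $-\Delta u=\la h f(u)\ge \la h_{\min}c_0\,u$ against $\varphi_1$ forces $\la\le \la_1/(h_{\min}c_0)$. The paper defers the proof to Subsection 2.1 of \cite{N3}, but this is essentially the only reasonable route, so you have reproduced the intended argument.
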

\subsection{Identities}
We next recall some key identities and a consequence of the Pohozaev identity. For the proofs, see  Subsection 2.2 in \cite{N3}. In the following, let $\{(\la_n,\mu_n,u_n)\}$ be a sequence of solutions of \eqref{pn}. The first one is standard.
\begin{lemma} 
We have that
\begin{equation}\label{id0}
-ru_n'(r)=\int_0^r \la_nhf(u_n(s))sds
\end{equation}
for all $r\in[0,1]$ and all $n\in \mathbb{N}$. 
\end{lemma}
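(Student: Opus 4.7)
The plan is to rewrite the radial Laplacian in divergence form and then integrate, using the boundary condition at the origin to eliminate the boundary term. This is the standard way one extracts such an identity for radial solutions of an elliptic problem on a disc.

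First I would multiply the ODE in \eqref{pn} by $r$, obtaining $-ru_n''(r) - u_n'(r) = \lambda_n r h(r) f(u_n(r))$, and recognize the left-hand side as the derivative $-(r u_n'(r))'$. Thus
\[
(r u_n'(r))' = -\lambda_n r h(r) f(u_n(r))
\]
pointwise on $(0,1)$. Since $u_n \in C^2([0,1])$, both sides extend continuously to $r=0$.

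Next I would integrate this identity from $0$ to an arbitrary $r \in [0,1]$. By the fundamental theorem of calculus,
\[
r u_n'(r) - \lim_{s \to 0^+} s u_n'(s) = -\int_0^r \lambda_n s h(s) f(u_n(s))\, ds.
\]
The boundary condition $u_n'(0)=0$ in \eqref{pn}, combined with $u_n' \in C([0,1])$, gives $\lim_{s \to 0^+} s u_n'(s) = 0$. Rearranging yields precisely \eqref{id0}.

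There is no real obstacle here: the computation is routine, and the only point requiring a brief remark is the vanishing of the boundary term at the origin, which is immediate from the regularity $u_n \in C^2([0,1])$ together with $u_n'(0)=0$.
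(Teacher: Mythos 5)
Your proof is correct and is the standard argument the paper alludes to when it calls this identity "standard" and defers the proof to Subsection 2.2 of \cite{N3}: multiplying by $r$ to put the radial operator in divergence form, integrating, and killing the boundary term via $u_n'(0)=0$ together with $u_n\in C^2([0,1])$. Nothing is missing.
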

The next Green type identity is often used for the concentration analysis in \cite{N3}. This is also a key to deduce our oscillation estimates Theorem \ref{th:osc} above.
\begin{lemma}\label{lem:id} We get that for all $0<r\le 1$,
\begin{equation}\label{id1}
u_n(0)-u_n(r)=\int_0^r \la_nh(s)f(u_n(s))s\log{\frac rs}ds
\end{equation}
and for all $0<r<s\le1$,
\begin{equation}\label{id2}
\begin{split}
u_n(r)-u_n(s)=\log{\frac sr}&\int_0^r \la_nh(t)f(u_n(t))tdt\\
&+\int_r^s \la_nh(t)f(u_n(t))t\log{\frac st}dt
\end{split}
\end{equation}
for all $n\in \mathbb{N}$.
\end{lemma}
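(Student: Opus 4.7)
The plan is to derive both identities directly from \eqref{id0} by dividing through by $r$ and integrating, then exchanging the order of integration via Fubini's theorem. Since the statement is essentially the Green's function representation for the radial Laplacian in two dimensions, the proof amounts to a routine calculation, and the main point is just to verify the bookkeeping of the iterated integrals.

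For \eqref{id1}, I would start from \eqref{id0} rewritten as
\[
-u_n'(t) = \frac{1}{t}\int_0^t \la_n h(s)f(u_n(s))s\,ds
\]
for all $t\in(0,1]$. Integrating this identity in $t$ from $0$ to $r$ (which is legitimate since $u_n\in C^2([0,1])$ with $u_n'(0)=0$) yields
\[
u_n(0)-u_n(r)=\int_0^r\frac{1}{t}\int_0^t\la_n h(s)f(u_n(s))s\,ds\,dt.
\]
Swapping the order of integration, the inner domain $\{0\le s\le t\le r\}$ is rewritten as $\{s\le r,\, s\le t\le r\}$, so the right-hand side becomes
\[
\int_0^r \la_n h(s)f(u_n(s))s\left(\int_s^r\frac{dt}{t}\right)ds=\int_0^r\la_n h(s)f(u_n(s))s\log\frac{r}{s}\,ds,
\]
which is \eqref{id1}.

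For \eqref{id2}, I would apply the same idea but integrate $-u_n'(t)$ on $[r,s]$ and split the inner integral over $[0,t]$ as $[0,r]\cup [r,t]$. The contribution of $[0,r]$ is constant in $t$ and produces the factor $\int_r^s dt/t=\log(s/r)$, giving the first term on the right-hand side of \eqref{id2}. The contribution of $[r,t]$, after Fubini on $\{r\le \tau\le t\le s\}$, produces the factor $\int_\tau^s dt/t=\log(s/\tau)$, giving the second term. There is no genuine obstacle: the only care needed is to justify Fubini, which is immediate because the integrand is nonnegative and continuous on the compact region of integration. This completes the plan.
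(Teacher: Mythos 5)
Your proof is correct and follows the natural route: integrate the first-order identity \eqref{id0} in the radial variable and apply Fubini to swap the iterated integrals. The computations check out, the splitting of the inner integral for \eqref{id2} is handled correctly, and the appeal to Fubini is properly justified (the integrand extends continuously to the compact triangular domain because $\frac1t\int_0^t\la_n h f(u_n)s\,ds\to0$ as $t\to0^+$). This is the standard Green's-representation argument and matches the approach cited from the first part of the series.
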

We also  give a consequence of the Pohozaev identity. (See Lemmas 2.16 and 2.17 in \cite{N3}.) 
\begin{lemma}\label{lem:gl}
Assume $\mu_n\to \infty$ as $n\to \infty$. Then $(u_n)$ is locally uniformly bounded in $(0,1]$. In particular, for any sequence $(r_n)\subset (0,1)$ such that $u_n(r_n)\to\infty$ as $n\to \infty$,  we have that $r_n\to0$ as $n\to \infty$. 
\end{lemma}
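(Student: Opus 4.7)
I would reduce the local boundedness claim to a uniform bound on $|u_n'(1)|$ via the Green-type identity \eqref{id2}, and then establish the latter via the radial Pohozaev identity together with the super-exponential comparison $F/f \sim 1/g'$ provided by (H1). For the reduction, apply \eqref{id2} with $s=1$, estimate $\log(1/t)\le \log(1/r)$ on $[r,1]$ in the second integral, and use \eqref{id0} to rewrite $\int_0^r \la_n h f(u_n) t\,dt = r|u_n'(r)|$ and $\int_r^1 \la_n h f(u_n) t\,dt = |u_n'(1)| - r|u_n'(r)|$. After cancellation this produces the clean pointwise estimate
\[
u_n(r)\le \log\frac{1}{r}\,|u_n'(1)|,\qquad r\in(0,1].
\]
Hence a uniform control $|u_n'(1)|\le C$ gives $u_n\le C\log(1/\rho)$ on every compact $[\rho,1]\subset(0,1]$, which is the desired local boundedness. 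The ``in particular'' assertion is then immediate: $u_n(r_n)\to\infty$ forbids $(r_n)$ from staying in any such $[\rho,1]$.

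To bound $|u_n'(1)|$, I would multiply the ODE in \eqref{pn} by $r^2 u_n'(r)$ and integrate on $(0,1)$ using $u_n'(0)=0=u_n(1)$; standard integration by parts yields the 2D radial Pohozaev identity
\[
(u_n'(1))^2 = 2\la_n \int_0^1 \bigl[2h(r)+rh'(r)\bigr]\, r\, F(u_n(r))\,dr,\qquad F(t):=\int_0^t f(s)\,ds.
\]
Lemma \ref{lem:monog} gives $g'(t)\to\infty$, and a one-step integration by parts together with (H1) yields $F(t)\sim f(t)/g'(t)$ as $t\to\infty$; in particular $F(t)\le 2f(t)/g'(t)$ for $t\ge T_0$ with $T_0$ large. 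Splitting the Pohozaev integrand at the sublevels $\{u_n<T_0\}$ and $\{u_n\ge T_0\}$, the low part is bounded by $C_2\la_n F(T_0)$, while the high part is bounded using \eqref{id0} by
\[
C_1\la_n\!\int_{\{u_n\ge T_0\}}\!\!\frac{f(u_n)}{g'(u_n)}\,r\,dr \le \frac{C_1}{g'(T_0)}\la_n\!\int_0^1\! h f(u_n)\,r\,dr = \frac{C_1}{g'(T_0)}|u_n'(1)|.
\]
The resulting quadratic inequality $(u_n'(1))^2 \le (C_1/g'(T_0))|u_n'(1)| + C_2 \la_n F(T_0)$ closes to $|u_n'(1)|\le C_1/g'(T_0) + \sqrt{C_2 \la_n F(T_0)}$, which is uniformly $O(1)$ as soon as $\la_n$ is bounded. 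Boundedness of $\la_n$ follows from Lemma \ref{lem:kap} under (H2), or can alternatively be extracted from \eqref{id1} at $r=1$ (which gives $\mu_n\le (h_{\max}/4)\la_n f(\mu_n)$) combined with a contradiction argument using Lemmas \ref{lem:g3}--\ref{lem:g4} to exclude $\la_n\to\infty$.

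The main obstacle is orchestrating the Pohozaev step so that the quadratic inequality actually closes: one must exploit the ratio $F/f\sim 1/g'$ at precisely the right scale, ensuring that the small factor $1/g'(T_0)$ consumes the contribution from the high set and leaves the remainder $C_2\la_n F(T_0)$ independent of $n$. The most delicate point is handling the absence of a direct upper bound on $\la_n$ when (H2) is not invoked, which requires an auxiliary argument matching the growth of $f(\mu_n)$ against the lower bound $\la_n\ge 4\mu_n/(h_{\max}f(\mu_n))$ coming from \eqref{id1}, and using the asymptotic growth comparisons in Lemmas \ref{lem:g3}--\ref{lem:g4} to exclude runaway behavior of $\la_n$.
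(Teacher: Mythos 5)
Your strategy is the same as the paper's: Lemma \ref{lem:gl} is cited in this paper as ``a consequence of the Pohozaev identity'' (with the proof outsourced to Lemmas~2.16 and~2.17 of \cite{N3}), and your reduction to a bound on $|u_n'(1)|$ via \eqref{id2} plus the radial Pohozaev identity is exactly that route. The computation $u_n(r)\le \log(1/r)\,|u_n'(1)|$ is correct, the Pohozaev identity $u_n'(1)^2 = 2\la_n\int_0^1(2h+rh')rF(u_n)\,dr$ is correct (with $F(t)=\int_0^t f$), and the asymptotic $F(t)\sim f(t)/g'(t)$ does follow from (H1)(i) via the integration by parts $\int f=\int f'/g'=f/g'+\int f\, g''/g'^2$ together with $g''/g'^2\to0$. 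The split at a threshold $T_0$ and the resulting quadratic inequality close correctly \emph{provided} $\la_n$ is bounded.

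The gap is precisely where you flag a potential difficulty: the boundedness of $\la_n$. Your primary route, Lemma~\ref{lem:kap}, requires (H2), which is not listed among the hypotheses of Lemma~\ref{lem:gl} (the lemma only assumes $\mu_n\to\infty$, with (H0) understood throughout the paper). Your proposed alternative does not hold up as stated. The estimate you extract from \eqref{id1} at $r=1$, namely $\mu_n\le (h_{\max}/4)\,\la_n f(\mu_n)$, yields only a \emph{lower} bound $\la_n\ge 4\mu_n/(h_{\max}f(\mu_n))$, and since $f$ grows superexponentially this lower bound tends to $0$; it says nothing about excluding $\la_n\to\infty$. Invoking ``a contradiction argument using Lemmas \ref{lem:g3}--\ref{lem:g4}'' does not help here because those lemmas compare $g$ and $g'$ at two diverging arguments and have no visible connection to an upper bound on $\la_n$. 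Either (H1) and some extra assumption such as (H2) should be made explicit in the statement (in which case your Pohozaev argument closes), or one needs a genuinely different mechanism (e.g.\ a local Pohozaev identity on $(a,1)$ for fixed $a>0$, or a rescaling/compactness argument) that avoids the global constant $C_2\la_n F(T_0)$ and thereby bypasses $\la_n$-boundedness altogether. As written, the last step is an unresolved hole rather than a completed proof.
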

\subsection{Notes on energy recurrence formulas} Lastly we note some properties of the energy reccurence formulas in Subsection \ref{sub:bf}. Take any numbers $q\in[1,2)$ and $p\in (2,\infty]$ such that $1/p+1/q=1$ where $1/\infty=0$ and define sequences $(a_k)$, $(\delta_k)$, $(\eta_k)$, and $(\tilde{\eta}_k)$ of values as in Subsection \ref{sub:bf}. We first remark that by Lemma 2.20 in \cite{N3}, those sequences  are well-defined. Moreover, they have the next properties. Proofs are given by Lemmas 2.21 and 2.22 in \cite{N3}.
\begin{lemma}\label{lem:b2} If $q\in(1,2)$, then the sequences $(a_k)$ and $(\delta_k)$ are strictly decreasing and  converge  to zero as $k\to \infty$ and $\sum_{k=1}^\infty a_k=\infty$. If $q=1$, then the sequences $(a_k)$ and $(\eta_k)$ are strictly decreasing and converge to zero as $k\to \infty$ and $\sum_{k=1}^\infty a_k=\infty$. 
\end{lemma}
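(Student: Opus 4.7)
The plan is to reduce each recurrence to a one-dimensional fixed-point problem parametrised by $a_k$, extract monotonicity from a compact identity, and then obtain the decay rate of $(a_k)$ from an asymptotic expansion which simultaneously forces $\sum a_k=\infty$ and $\de_k\to 0$ (resp.\ $\eta_k\to 0$).

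\textbf{Step 1: reduction and well-definedness.} In the case $q\in(1,2)$, set $r_k:=\de_{k+1}/\de_k\in(0,1)$; then \eqref{b1} reads $\psi(r_k)=2p/(2+a_k)$ with $\psi(r):=(1-r^p)/(1-r)$. Since $\psi$ is continuous and strictly increasing from $\psi(0^+)=1$ to $\psi(1^-)=p$, and $p>2$, a unique $r_k\in(0,1)$ exists whenever $a_k\in(0,2]$. Combining this identity with \eqref{b2} yields the clean formula
\[
a_k+r_ka_{k+1}=2(p-1)(1-r_k),
\]
from which, together with the base case $a_1=2$ and an elementary check on $r_1$, one inductively obtains $a_k\in(0,2)$ for all $k\ge 2$. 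In the case $q=1$ the substitution $r_k:=\eta_{k+1}/\eta_k$ turns \eqref{b3} into $-\log r_k=(2+a_k)(1-r_k)/2$; convexity of the difference of the two sides, together with the sign analysis at $r\to 0^+$ and $r\to 1^-$, provides a unique root $r_k\in(0,1)$, and \eqref{b4} gives the analogous compact identity $a_k+r_ka_{k+1}=2(1-r_k)$.

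\textbf{Step 2: monotonicity and convergence.} From the identity in Step 1 one gets
\[
a_{k+1}-a_k=\frac{2(p-1)(1-r_k)-(1+r_k)a_k}{r_k}
\]
(with the obvious analogue for $q=1$). Using the strict monotonicity of $\psi$ to bound $r_k$ from below in terms of $a_k$ (the case $k=1$ reduces to $r_1>1-2/p$, a consequence of $\psi(1-2/p)<p/2$; the inductive step uses the same comparison together with the induction hypothesis $a_k<2$), the numerator is strictly negative, so $(a_k)$ is strictly decreasing. Being bounded below by $0$, it converges to some $a_\infty\ge0$, and consequently $r_k\to r_\infty\in(0,1]$. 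Passing to the limit in \eqref{b1}--\eqref{b2} (resp.\ \eqref{b3}--\eqref{b4}) and solving the limiting algebraic system forces $(a_\infty,r_\infty)=(0,1)$. Strict monotonicity of $(\de_k)$ (resp.\ $(\eta_k)$) is immediate from $r_k<1$.

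\textbf{Step 3: decay rate and divergence.} Writing $r_k=1-s_k$ with $s_k\to 0^+$ and Taylor expanding, one obtains $s_k=\frac{2a_k}{(p-1)(2+a_k)}+O(a_k^2)$ if $q>1$ and $s_k=a_k+O(a_k^2)$ if $q=1$. Substituting into \eqref{b2} (resp.\ \eqref{b4}) and expanding once more, the leading linear terms cancel and one is left with
\[
a_{k+1}=a_k-c_q\,a_k^{\,2}+O(a_k^3),\qquad c_q=\begin{cases}\dfrac{p-2}{3(p-1)}&\text{if }q>1,\\[4pt]\dfrac13&\text{if }q=1,\end{cases}
\]
with $c_q>0$ in both regimes. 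A telescoping argument applied to $1/a_k$ then yields $a_k\sim 1/(c_q k)$, so that $\sum_{k=1}^\infty a_k=\infty$. Finally, $\log\de_k=\sum_{j<k}\log r_j\sim -\sum_{j<k}s_j$ and $s_j\gtrsim a_j/p$ combine with the divergence of $\sum a_j$ to give $\de_k\to 0$; the same argument handles $\eta_k\to 0$ when $q=1$.

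\textbf{Main obstacle.} The delicate point is Step 3: because the linear term in $a_{k+1}-a_k$ cancels exactly, one must retain two non-trivial terms in the expansion of $s_k$ together with the cubic correction of $(1-s_k)^p$ (resp.\ of $-\log(1-s_k)$); any premature truncation returns an uninformative $a_{k+1}\approx a_k$ and hides the sign of $c_q$. Once $c_q>0$ is identified, the strict decay to zero and the divergence of $\sum a_k$ follow by elementary comparison.
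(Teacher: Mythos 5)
Your outline—well-definedness via the function $\psi$, the compact identity linking $(a_k)$ and $(r_k)$, the asymptotic expansion $a_{k+1}=a_k-c_q a_k^2+O(a_k^3)$ with $c_q=\tfrac{p-2}{3(p-1)}$ (resp.\ $\tfrac13$), and the standard $1/a_k$-telescoping—is structurally sound, and I checked that your coefficients $c_q$ in Step~3 are correct. However, Step~2 has a genuine gap. Writing out the condition for the numerator in $a_{k+1}-a_k=\tfrac{2(p-1)(1-r_k)-(1+r_k)a_k}{r_k}$ to be negative, one needs $r_k>\tau(a_k):=\tfrac{2(p-1)-a_k}{2(p-1)+a_k}$, a threshold that \emph{increases} as $a_k$ decreases. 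The bound you extract from the induction hypothesis $a_k<2$ is the constant $r_k>\psi^{-1}(p/2)>1-2/p=\tau(2)$, and this is strictly weaker than $\tau(a_k)$ as soon as $a_k<2-2/p$; for instance, with $p=3$ one already has $a_3\approx1.17<4/3$ while $\tau(a_3)\approx0.55>1/3$, so the constant bound does not close the inequality. What is actually needed is the uniform inequality $\psi(\tau(a))<\tfrac{2p}{2+a}$ for all $a\in(0,2]$, equivalently the positivity of $H(r):=(p-2)-pr+pr^{p-1}-(p-2)r^p$ on $(0,1)$, which does hold but requires a real argument — e.g.\ noting $H(1)=H'(1)=0$ and $H''(r)=p(p-1)(p-2)r^{p-3}(1-r)>0$ on $(0,1)$, so $H$ is convex with a flat minimum at $r=1$. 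The same remark applies to your ``solving the limiting algebraic system'' step: ruling out a fixed point $a_\infty\in(0,2]$ is not automatic and needs a comparable analysis; of course, once the uniform inequality above is proved the nonexistence of interior fixed points is immediate.

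Two smaller points. In Step~1 for $q=1$, the identity you quote should read $a_{k+1}+r_k a_k=2(1-r_k)$ (this is just \eqref{b4} rearranged and does not use \eqref{b3}); the genuine $q=1$ analogue of the $q>1$ identity, combining both \eqref{b3} and \eqref{b4}, is $a_k+a_{k+1}=-2\log r_k$. And the $q=1$ monotonicity can be handled similarly to the above: it reduces to $\log\tfrac1r>\tfrac{2(1-r)}{1+r}$ on $(0,1)$, whose left-minus-right derivative is $-\tfrac1r+\tfrac{4}{(1+r)^2}=-\tfrac{(1-r)^2}{r(1+r)^2}<0$.
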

\begin{lemma}\label{lem:b3} Assume $q\in[1,2)$. Then we have that
\[
\displaystyle \tilde{\eta}_k\sum_{i=1}^k \frac{2a_i}{\tilde{\eta}_i}=2+a_k
\]
for all $k\in \mathbb{N}$.
\end{lemma}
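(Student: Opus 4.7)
The plan is to prove the identity by induction on $k \in \mathbb{N}$, after first unifying the two cases $q > 1$ and $q = 1$ by observing that $\tilde{\eta}_k = \eta_k^{1/q}$ specializes to $\delta_k^{p-1}$ when $q>1$ (since $\eta_k = \delta_k^p$ and $p/q = p-1$ by Lemma \ref{lem:pq}) and to $\eta_k$ itself when $q=1$ (since $1/q = 1$). In either case, both recurrences \eqref{b2} and \eqref{b4} can be rewritten in the uniform form
\[
\frac{\tilde{\eta}_{k+1}}{\tilde{\eta}_k}(2+a_k) = 2 - a_{k+1},
\]
because when $q>1$ we have $\tilde{\eta}_{k+1}/\tilde{\eta}_k = (\delta_{k+1}/\delta_k)^{p-1}$, the exact factor on the right-hand side of \eqref{b2}, and when $q=1$ we have $\tilde{\eta}_{k+1}/\tilde{\eta}_k = \eta_{k+1}/\eta_k$, the exact factor on the right-hand side of \eqref{b4}. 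This unified recurrence is the essential algebraic engine of the argument.

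For the base case $k=1$, one uses $a_1 = 2$ together with $\tilde{\eta}_1 = 1$ (which follows from $\delta_1 = \eta_1 = 1$) to compute directly that $\tilde{\eta}_1 \cdot (2a_1/\tilde{\eta}_1) = 2a_1 = 4 = 2 + a_1$, as required. For the inductive step, assuming $\tilde{\eta}_k \sum_{i=1}^k 2a_i/\tilde{\eta}_i = 2 + a_k$, one multiplies both sides by $\tilde{\eta}_{k+1}/\tilde{\eta}_k$ and then adds the $(k+1)$-th term of the sum, yielding
\[
\tilde{\eta}_{k+1}\sum_{i=1}^{k+1}\frac{2a_i}{\tilde{\eta}_i} \;=\; \frac{\tilde{\eta}_{k+1}}{\tilde{\eta}_k}(2+a_k) + 2a_{k+1}.
\]
By the unified recurrence, the first summand on the right equals $2 - a_{k+1}$, so the total collapses to $(2 - a_{k+1}) + 2a_{k+1} = 2 + a_{k+1}$, completing the induction.

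The main (and only) obstacle here is essentially bookkeeping: recognizing that the two cases $q>1$ and $q=1$, which are defined by apparently different recurrences in the variables $\delta_k$ and $\eta_k$ respectively, give rise to exactly the same quotient formula for $\tilde{\eta}_{k+1}/\tilde{\eta}_k$ once re-expressed through $\tilde{\eta}_k = \eta_k^{1/q}$. Once this observation is in hand, the induction is a one-line algebraic manipulation with no hidden analytic content, and no well-definedness issues arise because these sequences were already shown to be well-defined in Lemma 2.20 of \cite{N3}.
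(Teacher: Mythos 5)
Your proof is correct: the unification $\tilde{\eta}_k = \eta_k^{1/q}$ (which equals $\delta_k^{p-1}$ when $q>1$ since $p/q = p-1$, and equals $\eta_k$ when $q=1$) indeed recasts both \eqref{b2} and \eqref{b4} as the single recurrence $\tilde{\eta}_{k+1}(2+a_k)/\tilde{\eta}_k = 2 - a_{k+1}$, and the induction then closes in one line. The paper defers the proof to Lemmas 2.21--2.22 of the companion paper \cite{N3}, so it cannot be reproduced here verbatim, but your inductive argument via this unified recurrence is the natural one and is precisely what the $\tilde{\eta}_k$ notation is designed to enable.
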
 
\section{Oscillation and intersection estimates}\label{sec:osc}
Let us start our  main argument.  In this section we complete our oscillation and intersection estimates in Subsections \ref{sub:osc} and \ref{sub:int}.  In this section, we always assume (H1) with $q\in[1,2)$. For all $k\in \mathbb{N}$, let $(r_{k,n}),$ $(\ga_{k,n}),(\rho_{k,n}),(\bar{\rho}_{k,n})\subset [0,1)$ be sequences of values in Theorem \ref{th:sup1} and write $\mu_{k,n}=u_n(r_{k,n})$ for all $n\in \mathbb{N}$.

\subsection{Proof of Theorem \ref{th:osc}}
We begin with the proof of  Theorem \ref{th:osc}. To this end, we first recall the next fact noted in Remark 5.1 in \cite{N3}. 
\begin{lemma}\label{lem:o0} We have that for all $k\in \mathbb{N}$ , 
\[
 \lim_{n\to \infty}\frac{\log{\frac{\rho_{k,n}}{\ga_{k,n}}}}{g(\mu_{k,n})}=0,\ \ \ \lim_{n\to \infty}\frac{\log{\frac{\bar{\rho}_{k,n}}{\ga_{k,n}}}}{g(\mu_{k,n})}=0,
\]
where we assumed $k\not=1$ in the latter formula, and
\[
\lim_{n\to \infty}\sup_{r\in [\bar{\rho}_{k,n}/\ga_{k,n}, \rho_{k,n}/\ga_{k,n}]}\left|\frac{(\ga_{k,n}r)^2 \la_nh(\ga_{k,n}r)f'(u_n(\ga_{k,n}r))}{r^2e^{z_k(r)}}-1\right|=0.
\]
\end{lemma}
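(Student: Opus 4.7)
The plan is to derive all three estimates from the defining identity $\la_n h(r_{k,n})f'(\mu_{k,n})\ga_{k,n}^2=1$ together with the asymptotic information in Theorem \ref{th:sup1}, principally \eqref{eq:sup2}, \eqref{eq:sup3}, and the $C^2_{\mathrm{loc}}$ convergence $z_{k,n}\to z_k$, combined with the growth-rate estimates of Section \ref{sub:geg2}.

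For the first two limits, take logarithms of the defining identity:
\[
-2\log\ga_{k,n}=\log\la_n+\log h(r_{k,n})+g(\mu_{k,n})+\log g'(\mu_{k,n}),
\]
using $\log f'=g+\log g'$. Dividing by $g(\mu_n)$, the first term vanishes by \eqref{eq:sup2}, the second by continuity of $h$ at $0$, and the fourth by Lemma \ref{lem:monog}, while $g(\mu_{k,n})/g(\mu_n)\to\eta_k$ by Lemma \ref{lem:g3} (for $q>1$) or Lemma \ref{lem:g4} (for $q=1$) applied to $\mu_{k,n}/\mu_n\to\delta_k$. Hence $\log(1/\ga_{k,n})/g(\mu_n)\to\eta_k/2$. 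Pairing with \eqref{eq:sup3} yields $\log(r_{k,n}/\ga_{k,n})/g(\mu_n)\to 0$, and dividing by $g(\mu_{k,n})/g(\mu_n)\to\eta_k>0$ produces the first limit for $r_{k,n}$. To replace $r_{k,n}$ by $\rho_{k,n}$ (or $\bar\rho_{k,n}$ when $k\ge 2$), I would re-run the argument leading to \eqref{eq:sup3}: the pinning conditions $u_n(\rho_{k,n})/\mu_n\to\delta_k$ and, when $q=1$, the sharper \eqref{eq:sup00}, are precisely what that derivation requires, so the analogous conclusion $\log(1/\rho_{k,n})/g(\mu_n)\to\eta_k/2$ follows, and the corresponding limit for $\bar\rho_{k,n}$ is obtained in the same way.

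For the uniform ratio bound, I would use $\la_n h(r_{k,n})f'(\mu_{k,n})\ga_{k,n}^2=1$ and $u_n(\ga_{k,n}r)=\mu_{k,n}+z_{k,n}(r)/g'(\mu_{k,n})$ to factor the quantity inside the supremum as
\[
\frac{h(\ga_{k,n}r)}{h(r_{k,n})}\cdot\frac{f'(\mu_{k,n}+z_{k,n}(r)/g'(\mu_{k,n}))}{f'(\mu_{k,n})}\cdot e^{-z_k(r)}.
\]
On any compact $K\Subset(0,\infty)$ eventually contained in $[\bar\rho_{k,n}/\ga_{k,n},\rho_{k,n}/\ga_{k,n}]$, the $C^2_{\mathrm{loc}}$ convergence $z_{k,n}\to z_k$ and Lemma \ref{lem:g1} force the middle factor to converge uniformly to $e^{z_k(r)}$, while continuity of $h$ at $0$ sends the first factor to $1$; the product then tends uniformly to $1$ on $K$.

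The main obstacle is upgrading this compact-subset convergence to a uniform estimate on the full expanding interval, since the endpoints $\bar\rho_{k,n}/\ga_{k,n}\to 0$ and $\rho_{k,n}/\ga_{k,n}\to\infty$ escape every fixed compact set. For the right endpoint I would combine the rescaled ODE
\[
-z_{k,n}''-\tfrac{1}{r}z_{k,n}'=\frac{h(\ga_{k,n}r)}{h(r_{k,n})}\frac{f(\mu_{k,n}+z_{k,n}(r)/g'(\mu_{k,n}))}{f(\mu_{k,n})}
\]
with the energy identity \eqref{eq:sup01} and the polynomial decay $e^{z_k(r)}\sim r^{-(2+a_k)}$ of the limit profile at infinity; the boundary datum $u_n(\rho_{k,n})/\mu_n\to\delta_k$ (or \eqref{eq:sup00} when $q=1$) pins $z_{k,n}$ at $r=\rho_{k,n}/\ga_{k,n}$, and a Gr\"onwall-type comparison with $z_k$ propagates this control inward to any fixed truncation level. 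The left endpoint is treated symmetrically using the singular behaviour $e^{z_k}\sim r^{a_k-2}$ as $r\to 0^+$ when $k\ge 2$, together with the integral control from \eqref{id0}. I expect this endpoint extension to be the sole delicate step; the core identities themselves follow mechanically from Theorem \ref{th:sup1} and Lemmas \ref{lem:g1}--\ref{lem:g4}.
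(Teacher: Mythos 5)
The paper does not prove this lemma here: it explicitly recalls it from Remark~5.1 in \cite{N3}, so the sequences $(\rho_{k,n})$ and $(\bar{\rho}_{k,n})$ come equipped, by construction there, with exactly the properties asserted. You are therefore reproving something from the published interface of Theorem~\ref{th:sup1} alone, and that interface is not quite strong enough to support your argument.

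The central gap is the passage from $r_{k,n}$ to $\rho_{k,n}$ and $\bar{\rho}_{k,n}$. For $r_{k,n}$ you do not even need \eqref{eq:sup2} or \eqref{eq:sup3}: dividing \eqref{eq:sup0} by the defining identity $\lambda_n h(r_{k,n})f'(\mu_{k,n})\gamma_{k,n}^2=1$ gives $r_{k,n}/\gamma_{k,n}\to a_k/\sqrt 2$ directly, so $\log(r_{k,n}/\gamma_{k,n})$ is bounded and the limit is trivial. (Your route through \eqref{eq:sup2} also silently imports (H2) when $q=1$, which is not among the hypotheses of this lemma; Section~4 assumes only (H1).) The real content is that $\log(\rho_{k,n}/\gamma_{k,n})=o(g(\mu_{k,n}))$ even though $\rho_{k,n}/\gamma_{k,n}\to\infty$, and here your plan to ``re-run the argument leading to \eqref{eq:sup3}'' breaks down: that derivation rests on the \emph{finite positive} limit in \eqref{eq:sup0}, which is special to $r=r_{k,n}$. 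At $r=\rho_{k,n}$ the analogous scaled quantity $\lambda_n\rho_{k,n}^2 h f'(u_n(\rho_{k,n}))$ tends to $0$, not to $a_k^2/2$, so the cancellation you need is not available. The pinning condition $u_n(\rho_{k,n})/\mu_n\to\delta_k$ by itself only bounds $z_{k,n}(\rho_{k,n}/\gamma_{k,n})$ by $o(\mu_n g'(\mu_{k,n}))$, which is not small enough to conclude without further input about how $\rho_{k,n}$ was actually selected in \cite{N3}.

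This same issue undercuts your third step. Your factorization of the ratio is correct, and the compact-subset convergence via Lemmas~\ref{lem:g1} and the $C^2_{\mathrm{loc}}$ statement is fine, but the endpoint extension is where the lemma has teeth, and your Gr\"onwall sketch is circular: you want to propagate control inward from a boundary value of $z_{k,n}$ at $\rho_{k,n}/\gamma_{k,n}$, yet the only information Theorem~\ref{th:sup1} provides about $z_{k,n}$ at that escaping endpoint is the coarse pinning estimate above, while the first limits of the lemma (which would make that estimate usable) are precisely what you are still trying to prove. Without the construction of $(\rho_{k,n}),(\bar\rho_{k,n})$ from \cite{N3}, which is designed so that the uniform approximation of $\lambda_n h f'(u_n)$ by $e^{z_k}$ holds all the way to the endpoints, I do not see how to close the argument from the stated conclusions of Theorem~\ref{th:sup1} alone.
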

Then, we  show (i) of the theorem.   
\begin{lemma}\label{lem:o4} Suppose $k\in \mathbb{N}$. Choose any sequence $(r_n)\subset [\bar{\rho}_{k,n},\rho_{k,n}]$ and set $R_n=r_n/\ga_{k,n}$ for all $n\in \mathbb{N}$. Then we have that
\[
\la_n r_n^2h(r_n) f'(u_n(r_n))=(1+o(1))\frac{2a_k^2b_kR_n^{a_k}}{(1+b_k R_n^{a_k})^2}
\]  
as $n\to \infty$. In particular, assuming $g(\mu_n)^{-1}\log{R_n}\to0$ as $n\to \infty$ if $k=1$, we obtain that 
\[
g(u_n(r_n))=(2+o(1))\log{\frac1{r_n}}
\]
as $n\to \infty$ for any $k\in \mathbb{N}$ where we additionally assumed  (H2) if $q=1$. 
\end{lemma}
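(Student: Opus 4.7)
The plan is to derive the first identity directly from Lemma \ref{lem:o0} and then bootstrap it, by taking logarithms and weighing each term against $g(\mu_n)$, to obtain the second asymptotic.

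For the first identity I would substitute $r=R_n\in[\bar{\rho}_{k,n}/\ga_{k,n},\rho_{k,n}/\ga_{k,n}]$ into the third (uniform) formula of Lemma \ref{lem:o0}, which immediately yields
\[
r_n^2\la_n h(r_n)f'(u_n(r_n))=(1+o(1))R_n^2 e^{z_k(R_n)}.
\]
The explicit form \eqref{def:zk} of $z_k$ then gives $R^2 e^{z_k(R)}=2a_k^2 b_k R^{a_k}/(1+b_k R^{a_k})^2$, proving the claim.

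For the second identity, taking logarithms of the first and using $f=e^g$, so that $\log f'=g+\log g'$, one rewrites it as
\[
g(u_n(r_n))+\log g'(u_n(r_n))=2\log\frac1{r_n}+\log\frac1{\la_n}-\log h(r_n)+\log\frac{2a_k^2 b_k R_n^{a_k}}{(1+b_k R_n^{a_k})^2}+o(1).
\]
I would then inspect each right-hand-side term on the scale of $g(\mu_n)$. By \eqref{eq:sup2} (invoking (H2) if $q=1$), $\log(1/\la_n)=o(g(\mu_n))$; the $h(r_n)$ term is $O(1)$ since $h(0)>0$; and the $R_n$-term is dominated by $a_k|\log R_n|+O(1)$, where $|\log R_n|=o(g(\mu_n))$ by Lemma \ref{lem:o0} when $k\geq 2$ and by the extra hypothesis when $k=1$. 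On the left, $\log g'(u_n(r_n))=o(g(u_n(r_n)))$ by \eqref{eq:lgg}.

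The key auxiliary step is to identify $c_k:=\lim_n g(u_n(r_n))/g(\mu_n)>0$, and this is the step I expect to be the most delicate. If $q>1$, monotonicity of $u_n$ squeezes $u_n(r_n)/\mu_n$ between $u_n(\bar{\rho}_{k,n})/\mu_n$ and $u_n(\rho_{k,n})/\mu_n$, both converging to $\de_k$, so Lemma \ref{lem:g3} gives $c_k=\de_k^p=\eta_k$. If $q=1$, $\de_k=1$ and this rough squeeze is insufficient; instead I would use the refined endpoint asymptotic \eqref{eq:sup00} to deduce $u_n(r_n)=\mu_n-(\log(1/\eta_k)+o(1))g(\mu_n)/g'(\mu_n)$, and then Lemma \ref{lem:g4} gives $c_k=\eta_k$ as well.

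Assembling everything, with $L_n=\log(1/r_n)$, the displayed identity becomes
\[
g(u_n(r_n))(1+o(1))=2L_n+o(g(\mu_n)).
\]
Dividing by $g(\mu_n)$ and letting $n\to\infty$ forces $L_n/g(\mu_n)\to c_k/2$, so
\[
\frac{g(u_n(r_n))}{L_n}=\frac{g(u_n(r_n))}{g(\mu_n)}\cdot\frac{g(\mu_n)}{L_n}\longrightarrow c_k\cdot\frac{2}{c_k}=2,
\]
which is the desired asymptotic $g(u_n(r_n))=(2+o(1))\log(1/r_n)$.
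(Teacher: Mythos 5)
Your proof is correct and follows essentially the same route as the paper's: the first display drops out of the uniform estimate in Lemma \ref{lem:o0}; then you take logarithms, weigh the right-hand side terms against $g(\mu_n)$ using \eqref{eq:sup2} and the first two limits of Lemma \ref{lem:o0} (resp.\ the extra hypothesis when $k=1$), absorb $\log g'$ on the left via \eqref{eq:lgg}, and close the argument via $g(u_n(r_n))/g(\mu_n)\to\eta_k$ from Lemmas \ref{lem:g3} and \ref{lem:g4}. This is exactly the chain of references the paper's terse proof cites; you have simply spelled out the squeeze via \eqref{eq:sup00} in the $q=1$ case and the back-substitution that turns $L_n/g(\mu_n)\to \eta_k/2$ into the stated asymptotic.
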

\begin{proof} From the latter assertion of the previous lemma, we get that 
\[
\begin{split}
\la_n r_n^2h(r_n)f'(u_n(r_n))
&=(1+o(1))R_n^2e^{z_k(R_n)}
\end{split}
\]
as $n\to \infty$. This proves the former conclusion. Then since $g(u_n(r_n))/g(\mu_n)= \eta_k+o(1)$ as $n\to \infty$ by Lemmas \ref{lem:g3} and \ref{lem:g4}, using \eqref{eq:sup2}, \eqref{eq:lgg}, and the former formulas of the previous lemma, we prove the latter one. 
 This completes the proof. 
\end{proof}
For the cases (ii) and (iii), we use the next lemma.
\begin{lemma}\label{lem:o01}We have that
\[
\lim_{n\to \infty}\frac{\log{\frac{1}{\sqrt{\la_n}r_{k,n}}}}{g(\mu_n)}=\frac{\eta_k}{2}.
\]
\end{lemma}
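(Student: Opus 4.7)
The plan is essentially to reduce the claim to the two asymptotic formulas \eqref{eq:sup2} and \eqref{eq:sup3} already delivered by Theorem \ref{th:sup1}. Writing out the logarithm,
\[
\log\frac{1}{\sqrt{\la_n}\,r_{k,n}} \;=\; \log\frac{1}{r_{k,n}} \;+\; \frac{1}{2}\log\frac{1}{\la_n},
\]
and dividing by $g(\mu_n)$ separates the quantity of interest into exactly the two ratios whose limits are recorded in Theorem \ref{th:sup1}. Since by \eqref{eq:sup3} we have $\log(1/r_{k,n})/g(\mu_n)\to \eta_k/2$ and by \eqref{eq:sup2} we have $\log(1/\la_n)/g(\mu_n)\to 0$, adding the two limits gives $\eta_k/2 + 0 = \eta_k/2$, which is the conclusion.

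The only subtlety is that \eqref{eq:sup2} (and to a lesser extent \eqref{eq:sup3}) requires the auxiliary assumption (H2) in the case $q=1$. In the flow of Section \ref{sec:osc} this is not a real obstruction: Lemma \ref{lem:o01} is only invoked in the proofs of parts (ii) and (iii) of Theorem \ref{th:osc}, where the statement in the $q=1$ regime already carries (H2) as a hypothesis (as made explicit in Lemma \ref{lem:o4}). So when Lemma \ref{lem:o01} is used, both \eqref{eq:sup2} and \eqref{eq:sup3} are at our disposal with no further work. For $q\in(1,2)$, no additional assumption is needed: Theorem \ref{th:sup1} supplies both \eqref{eq:sup2} and \eqml{eq:sup3} unconditionally in that range.

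Since the argument consists solely of splitting a logarithm and taking a sum of limits that are already available from Theorem \ref{th:sup1}, there is no genuine technical obstacle. The ``hard work'' — namely the integral identities and scaling arguments producing \eqref{eq:sup2} and \eqref{eq:sup3} — has already been carried out in \cite{N3}. Thus the proof will consist of one displayed equation splitting the logarithm, one line citing \eqref{eq:sup2} and \eqref{eq:sup3}, and a remark that (H2) is in force in the $q=1$ case in which Lemma \ref{lem:o01} will subsequently be applied.
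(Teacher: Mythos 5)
Your decomposition $\log\frac{1}{\sqrt{\la_n}r_{k,n}} = \log\frac{1}{r_{k,n}} + \frac12\log\frac{1}{\la_n}$ combined with \eqref{eq:sup2} and \eqref{eq:sup3} is a valid computation, but it proves a weaker statement than the lemma asserts. The issue is precisely the (H2) hypothesis you flag: when $q=1$, both \eqref{eq:sup2} and \eqref{eq:sup3} are only available under (H2), whereas Lemma \ref{lem:o01} is stated (and must hold) under (H1) alone, as is the standing assumption of Section \ref{sec:osc}. Your claim that (H2) is always in force where the lemma is invoked is incorrect: Lemma \ref{lem:o01} is used in the proof of the \emph{first} conclusion of Lemma \ref{lem:o6} (correspondingly the first formula in part (iii) of Theorem \ref{th:osc}), namely $g(u_n(r_n)) = (\alpha_{1,k}(\eta)+o(1))\log\frac{1}{\sqrt{\la_n}r_n}$, which is explicitly stated \emph{without} (H2); only the second conclusion there invokes (H2). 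So your route would leave that first conclusion unproved.

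The paper's proof avoids this by going through \eqref{eq:sup0}, $\la_n r_{k,n}^2 h(r_{k,n}) f'(u_n(r_{k,n}))\to a_k^2/2$, which is part of the conclusion of Theorem \ref{th:sup1} that does not require (H2). Taking logarithms, using $\log f'(t)=g(t)+\log g'(t)$, boundedness of $\log h$, and \eqref{eq:lgg}, one finds $\log\frac{1}{\sqrt{\la_n}r_{k,n}}/g(\mu_{k,n})\to 1/2$, and then Lemmas \ref{lem:g3}/\ref{lem:g4} convert $g(\mu_{k,n})$ to $\eta_k g(\mu_n)$. The whole point of phrasing the lemma in terms of the \emph{combined} quantity $\sqrt{\la_n}r_{k,n}$, rather than $r_{k,n}$ alone, is that the $\la_n$-dependence is already packaged with $r_{k,n}$ inside \eqref{eq:sup0}; splitting the logarithm, as you do, re-introduces exactly the $\la_n$-estimate \eqref{eq:sup2} that the combined formulation was designed to bypass. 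To repair the proof, replace the use of \eqref{eq:sup2}--\eqref{eq:sup3} by \eqref{eq:sup0}, \eqref{eq:lgg}, and Lemmas \ref{lem:g3}, \ref{lem:g4}. (Minor: you wrote \verb|\eqml{eq:sup3}| where you meant \verb|\eqref{eq:sup3}|.)
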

\begin{proof} From \eqref{eq:sup0} and \eqref{eq:lgg}, we have that
\[
\lim_{n\to \infty}\frac{\log{\frac{1}{\sqrt{\la_n}r_{k,n}}}}{g(\mu_{k,n})}=\frac{1}{2}.
\]
Then Lemmas \ref{lem:g3} and \ref{lem:g4} give the desired formula. We finish the proof.
\end{proof}
We proceed to the proof for the case (ii). 
\begin{lemma}\label{lem:o5} Assume $1<q<2$ and $k\in \mathbb{N}$. Choose any sequence $(r_n)\subset [\rho_{k,n},\bar{\rho}_{k+1,n}]$ and value $\delta\in[\delta_{k+1},\delta_k]$ such that  $u_n(r_n)/\mu_n\to \delta$ as $n\to \infty$. Then we have that 
\[
g(u_n(r_n))=(\alpha_{q,k}(\delta)+o(1))\log{\frac1{\sqrt{\la_n}r_n}}=(\alpha_{q,k}(\delta)+o(1))\log{\frac1{r_n}}
\]
as $n\to \infty$.
\end{lemma}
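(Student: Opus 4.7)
The plan is to combine the Green-type identity \eqref{id2} with $(r,s)=(r_{k,n},r_n)$,
\[
u_n(r_{k,n})-u_n(r_n)=\log\frac{r_n}{r_{k,n}}\int_0^{r_{k,n}}\la_n h f(u_n)t\,dt+\int_{r_{k,n}}^{r_n}\la_n h f(u_n)t\log\frac{r_n}{t}\,dt,
\]
the energy recurrences in Theorem \ref{th:sup1}, and the telescoping identity of Lemma \ref{lem:b3} to extract the asymptotic of $\log(r_n/r_{k,n})$, then convert to $\log(1/(\sqrt{\la_n}r_n))$ via Lemma \ref{lem:o01}. Multiplying through by $g'(\mu_n)$, the left-hand side behaves like $\mu_n g'(\mu_n)(\delta_k-\delta+o(1))=p(\delta_k-\delta)g(\mu_n)(1+o(1))$ by \eqref{g2}.

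For the first integral I would decompose $[0,r_{k,n}]$ into the previous concentration windows $[\bar{\rho}_{j,n},\rho_{j,n}]$ for $j<k$, the transition windows $[\rho_{j,n},\bar{\rho}_{j+1,n}]$, and the left half $[\bar{\rho}_{k,n},r_{k,n}]$ of the $k$-th bubble. By \eqref{eq:sup01}, \eqref{eq:sup02}, and $g'(\mu_{j,n})/g'(\mu_n)\to\tilde{\eta}_j$ from Lemma \ref{lem:g3}, the first two families contribute $\sum_{j<k}2a_j/\tilde{\eta}_j$ and $0$ respectively after multiplication by $g'(\mu_n)$; for the left half, rescaling $t=\ga_{k,n}\sigma$ and using $z_{k,n}\to z_k$ with $r_{k,n}/\ga_{k,n}\to a_k/\sqrt{2}$ yields the contribution $a_k/\tilde{\eta}_k$, where $\int_0^{a_k/\sqrt{2}}\sigma e^{z_k(\sigma)}\,d\sigma=a_k$ is computed directly from $(\sigma z_k')'=-\sigma e^{z_k}$ together with the boundary data in \eqref{eq:zk}. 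For the second integral I split $[r_{k,n},r_n]=[r_{k,n},\rho_{k,n}]\cup[\rho_{k,n},r_n]$. On the first subinterval I write $\log(r_n/t)=\log(r_n/r_{k,n})+\log(r_{k,n}/t)$; the leading factor pairs with the right-half mass $a_k/\tilde{\eta}_k$ (after multiplying by $g'(\mu_n)$), while the correction is $O(1/g'(\mu_n))$ thanks to the finiteness of $\int_{a_k/\sqrt{2}}^\infty e^{z_k(\sigma)}\sigma|\log\sigma|\,d\sigma$, which follows from the $\sigma^{-(2+a_k)}$ decay of $e^{z_k}$. On the second subinterval $\log(r_n/t)\le\log(r_n/r_{k,n})+o(g(\mu_n))$ via Lemma \ref{lem:o0} (applied to $\log(\rho_{k,n}/\ga_{k,n})$), while the mass integral is $o(1/g'(\mu_n))$ by \eqref{eq:sup02}, so this piece is negligible.

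Assembling everything and invoking Lemma \ref{lem:b3} to telescope $\sum_{j=1}^{k}2a_j/\tilde{\eta}_j=(2+a_k)/\tilde{\eta}_k$, the coefficient of $\log(r_n/r_{k,n})$ collapses to $(2+a_k)/\tilde{\eta}_k$, producing
\[
\frac{\log(r_n/r_{k,n})}{g(\mu_n)}\longrightarrow\frac{p(\delta_k-\delta)\tilde{\eta}_k}{2+a_k}.
\]
Combined with $\log(1/(\sqrt{\la_n}r_{k,n}))/g(\mu_n)\to\eta_k/2$ from Lemma \ref{lem:o01} and $g(u_n(r_n))/g(\mu_n)\to\delta^p$ from Lemma \ref{lem:g3}, a direct algebraic simplification using $\eta_k=\delta_k^p$ and $\tilde{\eta}_k=\delta_k^{p-1}$ produces $g(u_n(r_n))/\log(1/(\sqrt{\la_n}r_n))\to\alpha_{q,k}(\delta)$. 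The second equality then follows from \eqref{eq:sup2}, which for $q\in(1,2)$ holds without invoking (H2) and hence forces $\log(1/\la_n)=o(g(\mu_n))=o(\log(1/r_n))$.

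The main obstacle will be the careful bookkeeping in the second integral, in particular verifying that the transition piece $[\rho_{k,n},r_n]$ contributes genuinely at lower order than $\log(r_n/r_{k,n})/g'(\mu_n)$. Here Lemma \ref{lem:o0} (bounding $\log(\rho_{k,n}/\ga_{k,n})=o(g(\mu_n))$) together with \eqref{eq:sup02} is essential, and one also needs to confirm convergence of the logarithmic moment of $e^{z_k}$ on $[a_k/\sqrt{2},\infty)$, which rests on the explicit decay $z_k(\sigma)=-(2+a_k)\log\sigma+O(1)$ of the limit bubble.
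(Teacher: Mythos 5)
Your plan shares the essential ingredients of the paper's proof: you use the Green identity \eqref{id2}, the energy estimates \eqref{eq:sup01}--\eqref{eq:sup02} combined with Lemmas \ref{lem:g3} and \ref{lem:b3} to compute the coefficient of the logarithm, Lemma \ref{lem:o0} to discard the logarithm of the dilation ratio, Lemma \ref{lem:o01} for the conversion to $\log(1/(\sqrt{\la_n}r_n))$, and \eqref{g2} together with Lemma \ref{lem:g3} to close the computation. However, you anchor \eqref{id2} at $r=r_{k,n}$, whereas the paper anchors at $r=\rho_{k,n}$. This seemingly minor choice makes the paper's argument substantially cleaner: since $[\bar{\rho}_{j,n},\rho_{j,n}]$ and $[\rho_{j,n},\bar{\rho}_{j+1,n}]$ are exactly the intervals appearing in \eqref{eq:sup01} and \eqref{eq:sup02}, taking $r=\rho_{k,n}$ gives $g'(\mu_n)\int_0^{\rho_{k,n}}\la_nhf(u_n)t\,dt\to (2+a_k)/\tilde{\eta}_k$ immediately and yields the two-sided sandwich
\[
\log\frac{r_n}{\rho_{k,n}}\int_0^{\rho_{k,n}}\la_nhf(u_n)t\,dt\le u_n(\rho_{k,n})-u_n(r_n)\le \log\frac{r_n}{\rho_{k,n}}\int_0^{r_n}\la_nhf(u_n)t\,dt,
\]
whose two sides differ only by \eqref{eq:sup02}, so no splitting of the $k$-th window and no logarithmic moment appear at all.

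Your route is not wrong, but as written it contains a real gap. You need (a) the half-window mass $g'(\mu_n)\int_{r_{k,n}}^{\rho_{k,n}}\la_nhf(u_n)t\,dt\to a_k/\tilde{\eta}_k$ and (b) the logarithmic moment $g'(\mu_n)\int_{r_{k,n}}^{\rho_{k,n}}\la_nhf(u_n)t\log(r_{k,n}/t)\,dt=O(1)$. Both are asserted by rescaling to $z_k$, but Theorem \ref{th:sup1} only provides $C^2_{\text{loc}}$ convergence $z_{k,n}\to z_k$ on the expanding interval $[\bar{\rho}_{k,n}/\ga_{k,n},\rho_{k,n}/\ga_{k,n}]$ together with the total energy \eqref{eq:sup01}; passing to the limit in integrals over the whole expanding window requires a uniform integrability (dominating-function) estimate for $e^{z_{k,n}}$ that you do not produce, and finiteness of $\int_{a_k/\sqrt{2}}^\infty e^{z_k}\sigma|\log\sigma|\,d\sigma$ alone does not deliver it. The gap for (a) is repairable by a Fatou-plus-complementarity argument (each half has $\liminf\ge a_k/\tilde{\eta}_k$ and they sum to $2a_k/\tilde{\eta}_k$). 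For (b), your claimed $O(1/g'(\mu_n))$ is stronger than necessary; the crude bound $0\le\log(\sigma\ga_{k,n}/r_{k,n})\le\log(\rho_{k,n}/\ga_{k,n})+O(1)=o(g(\mu_n))$ from Lemma \ref{lem:o0}, combined with the finite scaled mass from \eqref{eq:sup01}, already shows this term is $o(g(\mu_n)/g'(\mu_n))$, which is all the final computation requires. In short, your proof can be completed, but the unsplit $\rho_{k,n}$-anchor used in the paper avoids this machinery entirely.
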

\begin{proof} From \eqref{id2}, we observe that
\[
\log{\frac{r_n}{\rho_{k,n}}}\int_0^{\rho_{k,n}}\la_nhf(u_n)rdr \le u_n(\rho_{k,n})-u_n(r_n)\le \log{\frac{r_n}{\rho_{k,n}}}\int_0^{r_n}\la_nhf(u_n)rdr
\]
for all $n\in \mathbb{N}$. Then, \eqref{eq:sup01} and \eqref{eq:sup02} with Lemmas \ref{lem:g3} and \ref{lem:b3} show that 
\[
\begin{split}
g'(\mu_n)&(u_n(\rho_{k,n})-u_n(r_n))\\
&=\left(\frac{2+a_k}{\delta_k^{p-1}}+o(1)\right)\left(\log{\frac{r_{k,n}}{\rho_{k,n}}}+\log{\frac{1}{\sqrt{\la_n}r_{k,n}}}-\log{\frac1{\sqrt{\la_n}r_n}}\right)\\
&=\left(\frac{(2+a_k)\delta_k}{2}+o(1)\right)g(\mu_n)-\left(\frac{2+a_k}{\delta_k^{p-1}}+o(1)\right)\log{\frac1{\sqrt{\la_n}r_n}} 
\end{split}
\]
as $n\to \infty$ where for the latter equality we used Lemmas \ref{lem:o0} and \ref{lem:o01} with the fact that $r_{k,n}/\ga_{k,n}=a_k/\sqrt{2}+o(1)$ as $n\to \infty$ by \eqref{eq:sup0}.  It follows from \eqref{g2} that
\[
\left(\frac{(2+a_k)\delta_k}{2}-p\left(\delta_k-\delta\right)+o(1)\right)g(\mu_n)=\left(\frac{2+a_k}{\delta_k^{p-1}}+o(1)\right)\log{\frac1{\sqrt{\la_n}r_n}} 
\]
as $n\to \infty$. Since $g(u_n(r_n))/g(\mu_n)=\delta^p+o(1)$ as $n\to \infty$ from Lemma \ref{lem:g3}, we get the former equality. For the latter equality, it suffices to note that $g(u_n(r_n))^{-1}\log{\la_n}\to 0$ as $n\to \infty$ by \eqref{eq:sup2} and Lemma \ref{lem:g3}. We finish the proof. 
\end{proof}
Next, we consider the case (iii).
\begin{lemma}\label{lem:o6} Suppose $q=1$ and $k\in \mathbb{N}$. Select any sequence $(r_n)\subset [\rho_{k,n},\bar{\rho}_{k+1,n}]$ and  number $\eta\in[\eta_{k+1},\eta_k]$ such that
\[
u_n(r_n)=\mu_n-\left(\log{\frac1{\eta}}+o(1)\right)\frac{g(\mu_n)}{g'(\mu_n)}
\]
as $n\to \infty$. Then we have that 
\[
g(u_n(r_n))=(\alpha_{1,k}(\eta)+o(1))\log{\frac1{\sqrt{\la_n}r_n}}
\]
as $n\to \infty$. Especially, additionally supposing (H2), we get that
\[
g(u_n(r_n))=(\alpha_{1,k}(\eta)+o(1))\log{\frac1{r_n}}
\]
as $n\to \infty$.
\end{lemma}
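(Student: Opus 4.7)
The plan is to mimic closely the argument for Lemma \ref{lem:o5}, replacing every invocation of Lemma \ref{lem:g3} (valid for $q>1$) by Lemma \ref{lem:g4} (the analogue for $q=1$) and observing that here $\delta_k\equiv 1$ and the role of $\delta_k^{p-1}$ is played by $\eta_k$ via $\tilde\eta_k=\eta_k$ in Lemma \ref{lem:b3}. Starting from \eqref{id2} applied between $\rho_{k,n}$ and $r_n$, the two-sided bound
\[
\log{\frac{r_n}{\rho_{k,n}}}\int_0^{\rho_{k,n}}\la_nhf(u_n)rdr\ \le\ u_n(\rho_{k,n})-u_n(r_n)\ \le\ \log{\frac{r_n}{\rho_{k,n}}}\int_0^{r_n}\la_nhf(u_n)rdr
\]
reduces the task to estimating $g'(\mu_n)\int_0^{\rho_{k,n}}\la_nhf(u_n)rdr$. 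Splitting this integral into the $k$ concentration intervals $[\bar\rho_{i,n},\rho_{i,n}]$ ($i=1,\dots,k$) plus the transition intervals, by \eqref{eq:sup01}, \eqref{eq:sup02} and the fact that $g'(u_n(r_{i,n}))/g'(\mu_n)\to\eta_i$ (which follows from Lemma \ref{lem:g4} applied at each $i$-th concentration thanks to \eqref{eq:sup00}), this sum equals $\sum_{i=1}^{k}2a_i/\eta_i+o(1)=(2+a_k)/\eta_k+o(1)$ by Lemma \ref{lem:b3} (noting $\tilde\eta_k=\eta_k$ when $q=1$). The same asymptotic holds for the upper bound, since $\rho_{k,n}<r_n<\bar\rho_{k+1,n}$ and the contribution between $\rho_{k,n}$ and $r_n$ is absorbed by \eqref{eq:sup02}.

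Next I would rewrite $\log(r_n/\rho_{k,n})$ as
\[
\log{\frac{r_n}{\rho_{k,n}}}=\log{\frac{r_{k,n}}{\rho_{k,n}}}+\log{\frac{1}{\sqrt{\la_n}r_{k,n}}}-\log{\frac{1}{\sqrt{\la_n}r_n}},
\]
where the first term is $o(g(\mu_n))$ by Lemma \ref{lem:o0} together with $r_{k,n}/\ga_{k,n}\to a_k/\sqrt{2}$ from \eqref{eq:sup0}, and the second term equals $(\eta_k/2+o(1))g(\mu_n)$ by Lemma \ref{lem:o01}. On the other hand, by the hypothesis on $(r_n)$ and the analogue for $\rho_{k,n}$ coming from \eqref{eq:sup00},
\[
g'(\mu_n)\bigl(u_n(\rho_{k,n})-u_n(r_n)\bigr)=\Bigl(\log{\tfrac{\eta_k}{\eta}}+o(1)\Bigr)g(\mu_n).
\]
Combining these gives the single asymptotic relation
\[
\Bigl(\log{\tfrac{\eta_k}{\eta}}+o(1)\Bigr)g(\mu_n)=\Bigl(\tfrac{2+a_k}{\eta_k}+o(1)\Bigr)\left[\tfrac{\eta_k}{2}g(\mu_n)-\log{\tfrac{1}{\sqrt{\la_n}r_n}}+o(g(\mu_n))\right],
\]
which I would solve for $L_n:=\log(1/(\sqrt{\la_n}r_n))$, obtaining
\[
L_n=\tfrac{\eta_k}{2}\left[1-\tfrac{2}{2+a_k}\log{\tfrac{\eta_k}{\eta}}\right]g(\mu_n)+o(g(\mu_n)+L_n).
\]
Since $\eta\ge\eta_{k+1}>0$ the bracket is bounded away from $0$, so $L_n=O(g(\mu_n))$, the $o(L_n)$ term is absorbed, and the definition of $\alpha_{1,k}(\eta)$ gives $L_n=(\eta/\alpha_{1,k}(\eta)+o(1))g(\mu_n)$. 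Using $g(u_n(r_n))/g(\mu_n)\to\eta$ from Lemma \ref{lem:g4}, the first formula follows.

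For the final assertion, assuming (H2), Lemma \ref{lem:kap} bounds $\la_n$ from above, and \eqref{eq:sup2} gives $\log(1/\la_n)/g(\mu_n)\to0$, so $\log(1/\sqrt{\la_n})=o(g(\mu_n))$; since we just showed $L_n\asymp g(\mu_n)$, replacing $\sqrt{\la_n}r_n$ by $r_n$ only changes $L_n$ by a lower-order term, yielding the $\log(1/r_n)$ version. The main obstacle I anticipate is the careful bookkeeping of the $o(\cdot)$ terms in the displayed linear relation for $L_n$, in particular verifying that the $o(g(\mu_n)+L_n)$ error really can be absorbed on the right—this relies on the positivity of $\alpha_{1,k}(\eta)$ established in Remark \ref{rmk:bot}, together with the strict bound $\eta_{k+1}\le\eta\le\eta_k$, so that the coefficient of $g(\mu_n)$ in the leading term is genuinely positive and bounded away from zero.
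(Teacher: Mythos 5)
Your proposal is correct and follows essentially the same route as the paper: the paper's proof of Lemma \ref{lem:o6} is a one-paragraph "similarly to the previous proof" adaptation of Lemma \ref{lem:o5}, using \eqref{id2}, \eqref{eq:sup01}, \eqref{eq:sup02}, Lemma \ref{lem:g4}, Lemma \ref{lem:b3}, and Lemmas \ref{lem:o0}–\ref{lem:o01} in exactly the sequence you describe, and \eqref{eq:sup00} to pass from $u_n(\rho_{k,n})-u_n(r_n)$ to $\log(\eta_k/\eta)$. Your version merely spells out more explicitly the integral splitting and the $o(\cdot)$ absorption that the paper leaves implicit.
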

\begin{proof} Similarly to the previous proof, it follows from  \eqref{id2}, \eqref{eq:sup01} and \eqref{eq:sup02} with  Lemmas \ref{lem:g4} and \ref{lem:b3} that 
\[
\begin{split}
g'(\mu_n)&(u_n(\rho_{k,n})-u_n(r_n))\\
&=\left(\frac{2+a_k}{\eta_k}+o(1)\right)\left(\log{\frac{r_{k,n}}{\rho_{k,n}}}+\log{\frac{1}{\sqrt{\la_n}r_{k,n}}}-\log{\frac1{\sqrt{\la_n}r_n}}\right)\\
&=\left(\frac{2+a_k}{2}+o(1)\right)g(\mu_n)-\left(\frac{2+a_k}{\eta_k}+o(1)\right)\log{\frac1{\sqrt{\la_n}r_n}} 
\end{split}
\]
as $n\to \infty$ from Lemmas \ref{lem:o0} and \ref{lem:o01}. Hence recalling \eqref{eq:sup00}, we calculate
\[
\left(\frac{2+a_k}{2}-\log{\frac{\eta_k}{\eta}}+o(1)\right)g(\mu_n)=\left(\frac{2+a_k}{\eta_k}+o(1)\right)\log{\frac1{\sqrt{\la_n}r_n}} 
\]
as $n\to \infty$. Then using Lemma \ref{lem:g4}, we obtain the former formula. We readily show the latter one by noting $g(u_n(r_n))^{-1}\log{\la_n}\to 0$ as $n\to \infty$ from \eqref{eq:sup2} and Lemma \ref{lem:g4}. We finish the proof.
\end{proof}
We show Theorem \ref{th:osc}.
\begin{proof}[Proof of Theorem \ref{th:osc}] The theorem is the consequence of Lemmas \ref{lem:o4}, \ref{lem:o5}, and \ref{lem:o6}.  We complete the proof. 
\end{proof}
We prove the  corollary.
\begin{proof}[Proof of Corollary \ref{cor:tb}] Fix any $k\in \mathbb{N}$.  For the first assertion, we choose $(r_n)=(r_{k,n})$ in (i) of Theorem \ref{th:osc} and note the fact that $r_{k,n}/\ga_{k,n}=a_k/\sqrt{2}+o(1)$ by \eqref{eq:sup0} and thus, 
\[
\frac{2a_k^2b_k(r_{k,n}/\ga_{k,n})^{a_k}}{(1+b_k(r_{k,n}/\ga_{k,n})^{a_k})^2}=\frac{a_k^2}{2}+o(1)
\]
as $n\to \infty$. This shows the desired conclusion. The rest of the assertions readily follows from  (ii) and (iii) of Theorem \ref{th:osc}. We finish the proof.  
\end{proof}
\subsection{Proof of Proposition \ref{cor:o2}}
We next prove the intersection property Proposition \ref{cor:o2}.
\begin{proof}[Proof of Proposition \ref{cor:o2}] Noting (H2) and Lemma \ref{lem:kap}, we set $\bar{\la}=\lim\sup_{n\to\infty}\la_n<\infty$.  For any number $k\in \mathbb{N}$, choose the sequences $(r_{k,n})$ and $(r_{k,n}^*)$ as in  Corollary \ref{cor:tb}. Take any values $\alpha\in(\alpha_{q,k}^*,2)$ and $\beta<\log{\{a_k^2/(2\bar{\la}h(0))\}}$. Then from the corollary and the condition (C), we get
\[
g(U(r_{k,n}))+\log{g'(U(r_{k,n}))}\le U_{2,\beta}(r_{k,n})< g(u_n(r_{k,n}))+\log{g'(u_n(r_{k,n}))}
\]
and 
\[
g(u_n(r_{k,n}^*))< U_{\alpha,0}(r_{k,n}^*)\le g(U(r_{k,n}^*))
\]
for all large $n\in \mathbb{N}$. Moreover,  if $k\ge2$, we also have that
\[
g(u_n(r_{k-1,n}^*))< U_{\alpha,0}(r_{k-1,n}^*)\le g(U(r_{k-1,n}^*))
\]
for all large $n\in \mathbb{N}$ since $\alpha^*_{q,k-1}<\alpha^*_{q,k}$. Hence noting also the fact that $U(r)\to \infty$ as $r\to 0^+$ if  $k=1$, we get from the monotonicity of $g$ and $g'$ by Lemma \ref{lem:monog} that there exist sequences $(r_{k,n}^\pm)\subset (r_{k-1,n}^*,r_{k,n}^*)$, where we put  $r_{0,n}^*=0$ for all $n\in \mathbb{N}$ for the case $k=1$, such that $r_{k,n}^-<r_{k,n}<r_{k,n}^+$ and  $u_n(r_{k,n}^\pm)=U(r_{k,n}^\pm)$ for all large $n\in \mathbb{N}$. 

Then we claim that $u_n(r_{k,n}^\pm)/\mu_n\to \delta_k$ as $n\to \infty$ for all $q\ge1$. For the case $q=1$, this is clear since 
\[
\frac{u_n(r_{k-1,n})}{\mu_n}\ge \frac{u_n(r_{k,n}^\pm)}{\mu_n}\ge\frac{u_n(r_{k+1,n})}{\mu_n}
\]
for all large $n\in \mathbb{N}$ and both sides converge to 1 as $n\to \infty$ where we put $r_{0,n}=0$ for all $n\in \mathbb{N}$ for the case $k=1$.  For the case $q>1$, we argue by contradiction. If the claim does not hold,  then there exists a constant $\delta\in[\delta_k^*,\delta_{k-1}^*]\setminus\{\delta_k\}$, where we put $\delta_0^*=1$ for the case $k=1$, such that $u_n(r_{k,n}^\pm)/\mu_n\to \delta$ as $n\to \infty$ up to a subsequence. Here we clearly get the contradiction for the sequence $(r_{1,n}^-)$ since $r_{1,n}^-<r_{1,n}$ for all large $n\in \mathbb{N}$. For the other cases, the previous fact implies that  $(r_{k,n}^-)\subset (r_{k-1,n}^*,\bar{\rho}_{k,n})$ and $(r_{k,n}^+)\subset (\rho_{k,n},r_{k,n}^*)$ for all  $n\in \mathbb{N}$ up to a subsequence. Then from (ii) of Theorem \ref{th:osc} and the condition (C), there exists a value $\alpha'\in(0,2)$ such that
\[
g(u_n(r_{k,n}^\pm))<U_{\alpha',0}(u_n(r_{k,n}^\pm))\le g(U(r_{k,n}^\pm)) 
\]
for all $n\in \mathbb{N}$ up to a subsequence. This with  the monotonicity of $g$ yields that $u_n(r_{k,n}^\pm)\not=U(r_{k,n}^\pm)$ for all  $n\in \mathbb{N}$. This is a contradiction. Furthermore,  assuming $q=1$, we can similarly  confirm that
\[
u_n(r_{k,n}^\pm)=\mu_n-\left(\log{\frac1{\eta_k}}+o(1)\right)\frac{g(\mu_n)}{g'(\mu_n)}
\]  
as $n\to \infty$. Indeed, if the conclusion fails, there exists a value $\eta\in[\eta_k^*,\eta_{k-1}^*]\setminus\{\eta_k\}$, where we set $\eta_0^*=1$ for the case $k=1$, such that 
\[
u_n(r_{k,n}^\pm)=\mu_n-\left(\log{\frac1{\eta}}+o(1)\right)\frac{g(\mu_n)}{g'(\mu_n)}
\]  
as $n\to \infty$ up to a subsequence. Again we clearly get a contradiction for the sequence $(r_{1,n}^-)$ from the fact that $r_{1,n}^-<r_{1,n}$ for all large $n\in \mathbb{N}$ with \eqref{eq:sup00}. For the other cases, we notice that $(r_{k,n}^-)\subset (r_{k-1,n}^*,\bar{\rho}_{k,n})$ and $(r_{k,n}^+)\subset (\rho_{k,n},r_{k,n}^*)$ for all  $n\in \mathbb{N}$ up to a subsequence by \eqref{eq:sup00} again. It follows from (iii) of Theorem \ref{th:osc} and the condition (C) that there exists a value $\alpha''\in (0,2)$ such that  
\[
g(u_n(r_{k,n}^\pm))<U_{\alpha'',0}(u_n(r_{k,n}^\pm))\le g(U(r_{k,n}^\pm)) 
\]
for all  $n\in \mathbb{N}$ up to a subsequence. This again leads to the similar contradiction as above. 

Finally, for the final conclusion, we take any sequence $(r_n)$ satisfying the assumption. If the assertion fails, there exists a number $k_0\in \mathbb{N}$ such that 
\[
Z_{(0,r_n)}[u_n-U]\le 2k_0
\]
for all $n\in \mathbb{N}$ up to a subsequence.  Fix any $k\in \mathbb{N}$ with $k\ge k_0+1$. Then, it follows from the previous conclusions that there exist $2k$ sequences $0<r_{1,n}^-<r_{1,n}^+<r_{2,n}^-<r_{2,n}^+<\cdots<r_{k,n}^-<r_{k,n}^+<1$ such that $u_n(r_{i,n}^\pm)=U(r_{i,n}^\pm)$ for  all $n\in \mathbb{N}$ and all $i=1,\cdots,k$ up to a subsequence. Moreover, from the construction of the sequences, we may assume  $u_n(r_{k,n}^\pm)>u_n(r_n)$, which implies $r_{k,n}^\pm<r_n$, for all $n\in \mathbb{N}$. In particular, we obtain that 
\[
Z_{(0,r_n)}[u_n-U]\ge 2k
\]
for all  $n\in \mathbb{N}$ which is a contradiction.  We complete the proof.  
\end{proof}
Lastly, for the analysis of the oscillation behaviors around singular solutions in the next section, we apply the analogous result for the corresponding shooting type problems. We focus on the situation (H0) with $h\equiv1$. Let $(r_n,\mu_n,v_n)\in (0,\infty)\times (0,\infty)\times C^2([0,r_n])$ verify 
\begin{equation}\label{p:vn}
\begin{cases}
-v_n''-\frac 1r v_n'= f(v_n),\ v_n>0\text{ in }(0,r_n),\\
v_n(0)=\mu_n,\ v_n'(0)=0=v_n(r_n),
\end{cases}
\end{equation}
for all $n\in \mathbb{N}$. We call $\{(r_n,\mu_n,v_n)\}$ a sequence of solutions of \eqref{p:vn}. We give  the following. 
\begin{proposition}\label{cor:int2} Suppose (H1). Let $\{(r_n,\mu_n,v_n)\}$ be any sequence of solutions of \eqref{p:vn} such that  $\mu_n\to \infty$ as $n\to \infty$ and $V(r)$  a continuous function defined for all small $r>0$ verifying the condition (C). 
 Then, for any $k\in \mathbb{N}$, there exist sequences $(s_{k,n}^\pm)\subset (0,1)$ such that $s_{k,n}^-<s_{k,n}^+$ and $v_n(s_{k,n}^\pm)=V(s_{k,n}^\pm)$ for all large $n\in \mathbb{N}$ and $v_n(s_{k,n}^\pm)/\mu_n\to \delta_k$ and if $q=1$, 
\[
v_n(s_{k,n}^{\pm})=\mu_n-\left(\log{\frac1{\eta_k}}+o(1)\right)\frac{g(\mu_n)}{g'(\mu_n)}
\]
as $n\to \infty$ up to a subsequence. In particular, for any sequence $(s_n)\subset (0,r_n)$, in the interval where $V$ is defined, such that  $v_n(s_n)/\mu_n\to0$ if $q>1$ and $(\mu_n-v_n(s_n))g'(\mu_n)/g(\mu_n)\to \infty $ if $q=1$ as $n\to \infty$, we get 
\[
\lim_{n\to \infty}Z_{(0,s_n)}[v_n-V]=\infty.
\]
\end{proposition}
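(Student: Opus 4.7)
The plan is to reduce Proposition \ref{cor:int2} to the Dirichlet-problem results already established, via a rescaling of space. Given a sequence $\{(r_n,\mu_n,v_n)\}$ of solutions of \eqref{p:vn} with $\mu_n \to \infty$, set
\[
u_n(r) := v_n(r_n r), \qquad \la_n := r_n^2, \qquad r \in [0,1].
\]
A direct calculation verifies that $\{(\la_n,\mu_n,u_n)\}$ is a sequence of solutions of \eqref{pn} with $h \equiv 1$, so Theorem \ref{th:sup1}, Theorem \ref{th:osc}, and Corollary \ref{cor:tb} all apply to $u_n$ and produce, for each $k\in\mathbb N$, concentration data $(r_{k,n}),(\ga_{k,n}),(\rho_{k,n}),(\bar\rho_{k,n}),(r_{k,n}^*)$. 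Setting $s_{k,n}:=r_n r_{k,n}$ and $s_{k,n}^*:=r_n r_{k,n}^*$, we have $\sqrt{\la_n}\,r_{k,n}=s_{k,n}$ and $\sqrt{\la_n}\,r_{k,n}^*=s_{k,n}^*$, so the $\la_n$-laden asymptotics of Corollary \ref{cor:tb} translate directly to
\[
g(v_n(s_{k,n}))+\log g'(v_n(s_{k,n}))=2\log\tfrac{1}{s_{k,n}}+\log\tfrac{a_k^2}{2}+o(1),
\]
\[
g(v_n(s_{k,n}^*))=(\alpha_{q,k}^*+o(1))\log\tfrac{1}{s_{k,n}^*},
\]
with analogous translations of parts (ii) and (iii) of Theorem \ref{th:osc} at interior points in $[\rho_{k,n},\bar\rho_{k+1,n}]$. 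The crucial point is that the substitution $\sqrt{\la_n}\,r=s$ absorbs the $\la_n$-factor automatically, so these formulas for $v_n$ hold \emph{without} invoking (H2). The height identifications $v_n(s_{k,n})/\mu_n\to\delta_k$, $v_n(s_{k,n}^*)/\mu_n\to\delta_k^*$, and the $q=1$ versions of \eqref{eq:sup00}, transfer verbatim. Finally, \eqref{eq:sup0} with $h\equiv1$ gives $s_{k,n}^2 f'(v_n(s_{k,n}))\to a_k^2/2$; since $v_n(s_{k,n})\to\infty$, this forces $s_{k,n}\to 0$ and, by the same argument, $s_{k,n}^*\to 0$, so $V$ is defined at these points for large $n$.

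With these estimates in hand, the intersection construction mirrors that of Proposition \ref{cor:o2}. Fix $k\in\mathbb N$, choose $\alpha\in(\alpha_{q,k}^*,2)$ and $\beta<\log(a_k^2/2)$. Condition (C) at $s_{k,n}$ yields
\[
g(V(s_{k,n}))+\log g'(V(s_{k,n}))\le U_{2,\beta}(s_{k,n}) < g(v_n(s_{k,n}))+\log g'(v_n(s_{k,n})),
\]
while at $s_{k,n}^*$ (and at $s_{k-1,n}^*$ when $k\ge 2$, using $\alpha_{q,k-1}^*<\alpha_{q,k}^*$) it yields the reverse inequality $g(v_n(s_{k,n}^*))<U_{\alpha,0}(s_{k,n}^*)\le g(V(s_{k,n}^*))$. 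For $k=1$, condition (C) forces $V(r)\to\infty$ as $r\to 0^+$, supplying the missing lower endpoint. The strict monotonicity of $g$ and $g'$ from Lemma \ref{lem:monog} then converts these into $V(s_{k,n})<v_n(s_{k,n})$ and $v_n(s_{k,n}^*)<V(s_{k,n}^*)$, and the intermediate value theorem delivers $s_{k,n}^\pm$ with $s_{k,n}^-<s_{k,n}<s_{k,n}^+$ and $v_n(s_{k,n}^\pm)=V(s_{k,n}^\pm)$. The height limits $v_n(s_{k,n}^\pm)/\mu_n\to\delta_k$ (respectively the $q=1$ asymptotic in terms of $\eta_k$) are verified by the same contradiction scheme as in Proposition \ref{cor:o2}: if the limit differs, the translated Theorem \ref{th:osc}(ii)/(iii) asymptotics together with condition (C) yield a strict inequality between $g(v_n(s_{k,n}^\pm))$ and $g(V(s_{k,n}^\pm))$, contradicting the constructed equality.

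The divergence $Z_{(0,s_n)}[v_n-V]\to\infty$ is obtained exactly as in the closing argument of Proposition \ref{cor:o2}. Given $N\in\mathbb N$, pick $k\ge N+1$, string the $2k$ intersection points together as $s_{1,n}^-<s_{1,n}^+<\cdots<s_{k,n}^-<s_{k,n}^+$, and observe that the hypothesis on $v_n(s_n)$, combined with the height identifications just established, forces $v_n(s_{k,n}^\pm)>v_n(s_n)$ and hence $s_{k,n}^\pm<s_n$ for large $n$. The single delicate step is the first paragraph's observation that $\sqrt{\la_n}\,r$ is literally the $v_n$-variable under the rescaling; this is what allows the Dirichlet asymptotics of Corollary \ref{cor:tb} to be transferred into the $s$-form needed here without any recourse to (H2). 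Once this is recognized, everything else is a direct transcription of the Proposition \ref{cor:o2} proof to the shooting setting.
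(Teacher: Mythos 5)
Your proposal takes essentially the same route as the paper: rescale $u_n(r)=v_n(\sqrt{\la_n}r)$ with $\la_n=r_n^2$ to reduce to a sequence of solutions of \eqref{pn}, transfer the asymptotics of Corollary \ref{cor:tb} via the substitution $s=\sqrt{\la_n}r$ (which, as you correctly emphasize, absorbs the $\la_n$-factor and thereby eliminates any need for (H2)), and then re-run the intersection argument of Proposition \ref{cor:o2}. The only cosmetic difference is how you verify $s_{k,n},s_{k,n}^*\to 0$: you argue from \eqref{eq:sup0} together with $v_n(s_{k,n})\to\infty$, whereas the paper invokes Lemma \ref{lem:o01} to get $s_{k,n}<s_{k,n}^*<s_{k+1,n}\to 0$ directly; both are valid.
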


\begin{proof} Set $\la_n=r_n^2$, $u_n(r)=v_n(\sqrt{\la_n}r)$ for all $r\in [0,1]$ and all $n\in \mathbb{N}$. Then $\{(\la_n,\mu_n,u_n)\}$ is a sequence of solutions of \eqref{pn} such that $\mu_n\to \infty$ as $n\to \infty$. Hence from Corollary \ref{cor:tb}, by extracting a subsequence if necessary, for all $k\in \mathbb{N}$, we take sequences $(r_{k,n}),(r_{k,n}^*)\subset(0,1)$ of values satisfying all the assertions of the corollary. Putting $s_{k,n}=\sqrt{\la_n}r_{k,n}$ and $s_{k,n}^*=\sqrt{\la_n}r_{k,n}^*$ for all $n\in \mathbb{N}$, we have that
\[
\begin{split}
g&(v_n(s_{k,n}))+\log{g'\left(v_n(s_{k,n})\right)}=2\log{\frac1{s_{k,n}}}+\log{\frac{a_k^2 }{2 h(0)}}+o(1)
\end{split}
\]
with
\[
\begin{cases}
\displaystyle\frac{v_n(s_{k,n})}{\mu_n}\to \delta_k &\text{ if $q>1$},\\
\displaystyle v_n(s_{k,n})=\mu_n-\left(\log{\frac1{\eta_k}}+o(1)\right)\frac{g(\mu_n)}{g'(\mu_n)}&\text{ if $q=1$,}  
\end{cases}
\]
and
\[
g(u_n(s_{k,n}^*))=\left(\alpha_{q,k}^{*}+o(1)\right)\log{\frac1{s_{k,n}^*}}
\] 
with
\[
\begin{cases}
\displaystyle\frac{v_n(s_{k,n}^*)}{\mu_n}\to \delta_k^* &\text{ if $q>1$},\\
\displaystyle v_n(s_{k,n}^*)=\mu_n-\left(\log{\frac1{\eta_k^*}}+o(1)\right)\frac{g(\mu_n)}{g'(\mu_n)}&\text{ if $q=1$,}  
\end{cases}
\]
as $n\to \infty$. Notice that Lemma \ref{lem:o01} implies $s_{k,n}<s_{k,n}^*<s_{k+1,n}\to0$ as $n\to \infty$. Then arguing as in the previous proof, we find a sequence $(s_{k,n}^\pm)\subset (s_{k-1,n}^*,s_{k,n}^*)$, where we put $s_{0,n}^*=0$ for all $n\in \mathbb{N}$ if $k=1$, such that $s_{k,n}^-<s_{k,n}<s_{k,n}^+$ and  $v_n(s_{k,n}^\pm)=V(s_{k,n}^\pm)$ for all large $n\in \mathbb{N}$ and $v_n(s_{k,n}^\pm)/\mu_n\to \delta_k$ and  if $q=1$,
\[
v_n(s_{k,n}^\pm)=\mu_n-\left(\log{\frac1{\eta_k}}+o(1)\right)\frac{g(\mu_n)}{g'(\mu_n)}
\]
as $n\to \infty$. The rest of the proof is same with that for the previous proposition. We finish the proof. 
\end{proof}
\section{Infinite oscillations around singular solutions}\label{sec:bif}
In this final section,  we prove all the results in Subsection  \ref{sub:bd}. To this end, we apply the idea of the proof of  Lemma 3.5  in \cite{Mi} with our intersection assertions in the previous sections. From now on, we focus on the case  $h\equiv1$  and consider solutions $(r_0,v)\in (0,\infty)\times  C^2([0,r_0])$ to the problem
\begin{equation}\label{pv}
\begin{cases}
-v''-\frac1r v'=f(v),\ v>0\text{ in }(0,r_0),\\
v'(0)=0=v(r_0),
\end{cases}
\end{equation}
and solutions $(R_0,V)\in (0,\infty)\times  C^2((0,R_0])$ to   
\begin{equation}\label{eq:sgv}
\begin{cases}
-V''-\frac1r V'=f(V),\ V>0\text{ in }(0,R_0),\\
\lim_{r\to 0^+}V(r)=\infty,\ V(R_0)=0.
\end{cases}
\end{equation}
We note that if $(\la,u)$ and $(\la,U)$ are solutions to \eqref{p} and \eqref{eq:sg}, then $(r_0,v)=(\sqrt{\la},u(\cdot/\sqrt{\la}))$ and $(R_0,V)=(\sqrt{\la},U(\cdot/\sqrt{\la}))$ are solutions to \eqref{pv} and \eqref{eq:sgv} respectively. 
\subsection{Proof of Theorem \ref{th:bif}}
We begin with the proof of Theorem \ref{th:bif} and its corollary. We first show that the assumption of the theorem implies the divergence of the intersection number between the corresponding blow-up solutions of \eqref{pv} and solutions of \eqref{eq:sgv}.  
\begin{lemma}\label{lem:bif1} Assume as in Theorem \ref{th:bif} and set $(r(\mu),v(\mu,\cdot))=(\sqrt{\la(\mu)},u(\mu,\cdot/\sqrt{\la(\mu)}))$ for all $\mu>0$ and $(r^*,V^*)=(\sqrt{\la^*},U^*(\cdot/\sqrt{\la^*}))$. Then we have that 
\[
\lim_{\mu\to \infty}Z_{(0,\min\{r(\mu),r^*\})}[v(\mu,\cdot)-V^*]=\infty.
\]
\end{lemma}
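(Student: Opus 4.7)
The plan is to reduce the assertion to Proposition \ref{cor:int2} applied to the shooting-problem sequence $\{(r_n,\mu_n,v_n)\}=\{(r(\mu_n),\mu_n,v(\mu_n,\cdot))\}$ with $V=V^*$. The key observation is that the intersection points delivered by that proposition already lie inside $(0,\min\{r_n,r^*\})$ for all large $n$: by construction they are trapped below $r_n$, and their convergence to zero gives the bound by $r^*$. Notably, this avoids any a priori control on $(\la(\mu))$ as $\mu\to\infty$.

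I would argue by contradiction. Suppose the conclusion fails; then there exist $N\in\mathbb N$ and a sequence $\mu_n\to\infty$ such that
\[
Z_{(0,\min\{r_n,r^*\})}[v_n-V^*]\le N
\]
for every $n$, where $v_n=v(\mu_n,\cdot)$, $\la_n=\la(\mu_n)$, and $r_n=r(\mu_n)=\sqrt{\la_n}$. Since $\{(r_n,\mu_n,v_n)\}$ is a sequence of solutions of \eqref{p:vn} with $\mu_n\to\infty$, and $V^*$ satisfies the condition (C) by hypothesis, Proposition \ref{cor:int2} applies. For each $k=1,\dots,N+1$, after passing to a further subsequence, this yields sequences $(s_{k,n}^\pm)\subset(0,1)$ with $v_n(s_{k,n}^\pm)=V^*(s_{k,n}^\pm)$ for all large $n$.

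It remains to show that $s_{k,n}^\pm<\min\{r_n,r^*\}$ for every $k\le N+1$ and every sufficiently large $n$. The bound $s_{k,n}^\pm<r_n$ is built into the construction: the proof of Proposition \ref{cor:int2} places $s_{k,n}^\pm\in(s_{k-1,n}^*,s_{k,n}^*)$, where $s_{k,n}^*=\sqrt{\la_n}\,r_{k,n}^*$ and $r_{k,n}^*\in(r_{k,n},r_{k+1,n})\subset(0,1)$ by Corollary \ref{cor:tb}, so $s_{k,n}^*<\sqrt{\la_n}=r_n$. The bound $s_{k,n}^\pm<r^*$ follows because Lemma \ref{lem:o01} gives $s_{k,n}^*\to 0$ as $n\to\infty$ for each fixed $k$, whereas $r^*>0$ is a fixed constant. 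This produces at least $2(N+1)>N$ distinct zeros of $v_n-V^*$ in $(0,\min\{r_n,r^*\})$ for large $n$, which contradicts the assumed bound and finishes the argument.

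The main potential obstacle is purely organizational: one must extract subsequences to apply Proposition \ref{cor:int2} simultaneously for the finitely many indices $k=1,\dots,N+1$, which is standard. No new analytic input beyond Proposition \ref{cor:int2}, Corollary \ref{cor:tb}, and Lemma \ref{lem:o01} is required; in particular, the argument is oblivious to whether $(\la(\mu))$ stays bounded, tends to zero, or tends to infinity as $\mu\to\infty$, because the relevant upper bound $s_{k,n}^\pm<r_n$ is an automatic consequence of the construction.
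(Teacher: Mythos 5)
Your proof is correct, but it takes a genuinely different route from the paper's. The paper's proof invokes the \emph{final} conclusion of Proposition \ref{cor:int2} with the test sequence $s_n=\min\{r(\mu_n),r^*\}$, which requires first verifying the hypothesis $v_n(s_n)/\mu_n\to 0$ (respectively $(\mu_n-v_n(s_n))g'(\mu_n)/g(\mu_n)\to\infty$ if $q=1$); this forces a casework on whether $r(\mu_n)\le r^*$ or $r(\mu_n)>r^*$, and in the latter case an additional comparison $r_{k,n}<s_n/\sqrt{\la_n}$ via \eqref{eq:sup2} and \eqref{eq:sup3} together with $u_n(r_{k,n})/\mu_n\to\delta_k$. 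You instead use only the \emph{former} part of Proposition \ref{cor:int2} (existence of the intersection sequences $(s_{k,n}^\pm)$ for each fixed $k$) and then place these points directly inside $(0,\min\{r_n,r^*\})$; this avoids the casework and the extra asymptotic computation entirely, which is a real simplification. The trade-off is that you open up the proof of Proposition \ref{cor:int2} (citing the placement $s_{k,n}^\pm\in(s_{k-1,n}^*,s_{k,n}^*)$ and $s_{k,n}^*\to 0$ via Lemma \ref{lem:o01}), whereas the paper's argument only uses the proposition's statement. If you prefer to stay at the level of the statement, note that $s_{k,n}^\pm\to 0$ also follows immediately from $V^*(s_{k,n}^\pm)=v_n(s_{k,n}^\pm)\to\infty$ together with the continuity (and hence boundedness on $[\varepsilon,r^*]$) of $V^*$, and the bound $s_{k,n}^\pm<r_n$ is automatic from positivity of the intersections since $v_n(r_n)=0$; so the argument can be made entirely modular.
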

\begin{proof} We argue by contradiction. If the conclusion fails, then there exists a sequence $(\mu_n)$ of positive values such that $\mu_n\to \infty$ as $n\to \infty$ and 
\begin{equation}\label{eq:bi0}
\sup_{n\in \mathbb{N}}Z_{(0,\min\{r(\mu_n),r^*\})}[v(\mu_n,\cdot)-V^*]<\infty.
\end{equation}
We put $(\la_n,u_n)=(\la(\mu_n),u(\mu_n,\cdot))$ and $(r_n,v_n)=(r(\mu_n),v_n(\mu_n,\cdot))$ for all $n\in \mathbb{N}$. Then $\{(\la_n,\mu_n,u_n)\}$ and $\{(r_n,\mu_n,v_n)\}$ are sequences of solutions of \eqref{pn} and \eqref{p:vn} respectively. Put $s_n=\min\{r_n,r^*\}$ for all $n\in \mathbb{N}$. If $r_n\le r^*$ for all $n\in \mathbb{N}$, then $v_n(s_n)=0$ for all $n\in \mathbb{N}$. In particular, all the conditions in Proposition \ref{cor:int2} are satisfied with the above sequences $\{(r_n,\mu_n,v_n)\}$ and $(s_n)$ and the function $V=V^*$ where we noted \eqref{g2} for the case $q=1$. Hence the final conclusion of the proposition leads to the contradiction with  \eqref{eq:bi0}. Therefore, we may assume $s_n=r^*$ for all $n\in \mathbb{N}$ by extracting a subsequence if necessary. Then, for any $k\in \mathbb{N},$ by choosing a subsequence if necessary, we take the sequence $(r_{k,n})$ as in Theorem \ref{th:sup1}. We get from  \eqref{eq:sup2} and \eqref{eq:sup3} that
\[
\lim_{n\to \infty}\frac{\log{r_{k,n}}}{g(\mu_n)}=-\frac{\eta_k}{2}<0=\lim_{n\to \infty}\frac{\log{\frac{s_n}{\sqrt{\la_n}}}}{g(\mu_n)}.
\]
This implies that $r_{k,n}<s_n/\sqrt{\la_n}$ for all large $n\in \mathbb{N}$ and thus, 
\[
 \limsup_{n\to \infty}\frac{v_n(s_n)}{\mu_n}\le\lim_{n\to \infty}\frac{u_n(r_{k,n})}{\mu_n}= \delta_k
\]
if $q>1$ and 
\[
\liminf_{n\to \infty}(\mu_n-v_n(s_n))\frac{g'(\mu_n)}{g(\mu_n)}\ge\lim_{n\to \infty} (\mu_n-u_n(r_{k,n})))\frac{g'(\mu_n)}{g(\mu_n)}=\log{\frac1{\eta_k}}
\]
if $q=1$. Since $k\in \mathbb{N}$ is arbitrary, from Lemma \ref{lem:b2}, we have that 
\[
\lim_{n\to \infty}\frac{v_n(s_n)}{\mu_n}=0
\]
if $q>1$ and
\[
\lim_{n\to \infty}(\mu_n-v_n(s_n))\frac{g'(\mu_n)}{g(\mu_n)}=\infty
\]
if $q=1$. Consequently, we again confirm that all the assumptions in Proposition \ref{cor:int2} are verified and get the contradiction with \eqref{eq:bi0}.  We finish the proof. 
\end{proof}
We next prove that the divergence of the intersection number leads to  infinite  boundary oscillations. The original idea comes from the proof of Lemma 3.5 in \cite{Mi}. See also that of Lemma 5 in \cite{Mi2}. We remark that the second assertion (ii)' gives additional information of an infinite oscillation of the first derivative at the boundary.  
\begin{lemma}\label{lem:bif2} Assume $f\in C^2([0,\infty))$ with $f>0$ on $(0,\infty)$, and $h\equiv1$. Let  $(\la(\mu),u(\mu,\cdot))$ be the solutions $C^1$-curve obtained as in the first paragraph of Subsection \ref{sub:bd} and $(\la,U)=(\la^*,U^*)$  any solution of \eqref{eq:sg}. Put $(r(\mu),v(\mu,\cdot))=(\sqrt{\la(\mu)},u(\mu,\cdot/\sqrt{\la(\mu)}))$ for all $\mu>0$ and $(r^*,V^*)=(\sqrt{\la^*},U^*(\cdot/\sqrt{\la^*}))$. Finally, assume that
\begin{equation}\label{eq:Ninf}
\lim_{\mu\to \infty}Z_{(0,\min\{r(\mu),r^*\})}[v(\mu,\cdot)-V^*]=\infty.
\end{equation}
Then  the next assertions  (i)' and (ii)' hold true.
\begin{enumerate}
\item[(i)'] There exist sequences $(\mu_n^{\pm})$ of positive values such that $\mu_n^\pm\to \infty$ as $n\to \infty$ and $r(\mu_n^-)<r^*<r(\mu_n^+)$ for all $n\in \mathbb{N}$.
 \item[(ii)'] There exist sequences $(\nu_n^{\pm})$ of positive values such that $\nu_n^\pm\to \infty$ as $n\to \infty$  and  $r(\nu_n^\pm)=r^*$ and $|v_r(\nu_n^-,r^*)|<|(V^*)'(r^*)|<|v_r(\nu_n^+,r^*)|$ for all $n\in \mathbb{N}$ where $v_r(\mu,r)$ denote the first derivative of $v(\mu,r)$ with respect to $r$.
\end{enumerate}
\end{lemma}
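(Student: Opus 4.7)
The plan is to introduce $\phi(\mu,r) := v(\mu,r) - V^*(r)$ and the intersection count $N(\mu) := Z_{(0,\min\{r(\mu),r^*\})}[\phi(\mu,\cdot)]$, then to track how the sign of $\phi$ at the right endpoint and the parity of $N(\mu)$ evolve as $\mu$ varies. The overall picture is that $N(\mu)$ is piecewise constant in $\mu$ and can only change when $r(\mu)$ passes through $r^*$, so the hypothesis $N(\mu) \to \infty$ will force infinitely many such crossings, which yields (i)'; a first-order Taylor expansion at each crossing then delivers (ii)'.

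I would first collect sign information at the endpoints. Since $V^*(r) \to \infty$ as $r \to 0^+$ while $v(\mu,\cdot)$ stays bounded near $0$, one has $\phi(\mu,\cdot) < 0$ on a right neighborhood of $0$ for every $\mu$. At the right endpoint: if $r(\mu) < r^*$ then $\phi(\mu, r(\mu)) = -V^*(r(\mu)) < 0$, so both endpoint signs are negative and $N(\mu)$ is even; if $r(\mu) > r^*$ then $\phi(\mu, r^*) = v(\mu, r^*) > 0$, so the endpoint signs disagree and $N(\mu)$ is odd. Next I would invoke ODE uniqueness for $-v'' - v'/r = f(v)$: if $v(\mu,\cdot)$ and $V^*$ agreed together with their derivatives at some interior point, backward integration along the ODE would force $v(\mu,\cdot) \equiv V^*$, contradicting the singularity of $V^*$ at $0$ versus the regularity of $v(\mu,\cdot)$ there. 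Hence every interior zero of $\phi$ is simple and depends continuously on $\mu$ via the implicit function theorem. Because the moving endpoint $r(\mu)$ with $r(\mu) < r^*$ never carries a zero of $\phi$, and because the fixed endpoint $r^*$ carries a zero precisely when $r(\mu) = r^*$, the function $N(\mu)$ is locally constant away from the set of crossings $r(\mu) = r^*$, at which it jumps by exactly $\pm 1$.

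Assertion (i)' is then immediate: the hypothesis $N(\mu) \to \infty$ produces an infinite increasing sequence $\nu_1 < \nu_2 < \cdots \to \infty$ of crossings $r(\nu_n) = r^*$, and by continuity of $\mu \mapsto r(\mu)$ along the $C^1$-curve, the values of $r(\mu)$ on both sides of each $\nu_n$ supply the subsequences $(\mu_n^\pm)$ with $r(\mu_n^-) < r^* < r(\mu_n^+)$. For (ii)' I would note that at each crossing $\nu_n$, both $v(\nu_n, r^*) = 0 = V^*(r^*)$ with strictly negative first derivatives ($r^*$ is the first zero of $v(\nu_n, \cdot)$, and $V^*$ is positive on $(0,r^*)$), and the same uniqueness argument forbids $v_r(\nu_n, r^*) = (V^*)'(r^*)$. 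The first-order expansion
\[
\phi(\nu_n, r) = \bigl(v_r(\nu_n, r^*) - (V^*)'(r^*)\bigr)(r - r^*) + o(r - r^*)
\]
translates the sign of $\phi(\nu_n, \cdot)$ just to the left of $r^*$ directly into the desired comparison: a positive sign there forces $|v_r(\nu_n,r^*)| > |(V^*)'(r^*)|$, and a negative sign forces the reverse. Counting sign flips between the negative sign at $0^+$ and this last sign through the $N(\nu_n)$ simple interior zeros yields parity $(-1)^{N(\nu_n)+1}$; since $r(\mu)$ crosses $r^*$ in alternating directions (by continuity of $r$), successive jumps of $N$ at consecutive crossings have opposite signs, so the parity of $N(\nu_n)$ alternates. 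Both cases therefore occur along subsequences $(\nu_n^\pm)$, yielding (ii)'.

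The main obstacle will be the bookkeeping in Step~2: carefully tracking how zeros of $\phi$ enter or leave the moving interval $(0, \min\{r(\mu), r^*\})$ as $\mu$ varies. One must rule out zeros escaping through $r = 0$ (ensured by $\phi(\mu, r) \to -\infty$ uniformly on compact $\mu$-ranges as $r \to 0^+$), rule out zeros escaping through the moving endpoint $r(\mu)$ when $r(\mu) < r^*$ (ensured by $\phi(\mu, r(\mu)) < 0$), and verify that at each crossing exactly one zero of $\phi$ is exchanged with the endpoint $r^*$, producing the precise jump $\pm 1$ rather than any larger even change. Combined with the simplicity of interior zeros from ODE uniqueness, this transversality bookkeeping is the technical heart of the proof.
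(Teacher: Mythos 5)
Your framework matches the paper's: introduce $\phi = v(\mu,\cdot) - V^*$, use ODE uniqueness to get simplicity of zeros and continuous dependence, observe that $N(\mu)$ can change only when $r(\mu)$ meets $r^*$, and then leverage $N(\mu)\to\infty$. Your parity observation (even iff $r(\mu)<r^*$, odd iff $r(\mu)>r^*$) is an elegant bookkeeping tool that the paper does not state explicitly. However, there is a genuine gap precisely where you declare (i)' ``immediate.''

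Having an infinite sequence of values $\nu_n\to\infty$ with $r(\nu_n)=r^*$ does \emph{not}, by continuity alone, supply values of $r(\mu)$ on both sides of $r^*$: the curve $\mu\mapsto r(\mu)$ could touch $r^*$ from below infinitely often and return without ever exceeding $r^*$. To rule this out you must show that if $r(\mu)\le r^*$ for all $\mu\ge\mu_0$, then $N(\mu)$ is eventually bounded above (contradicting \eqref{eq:Ninf}); this is exactly the contradiction argument the paper carries out (Cases 1--4 in the proof, made rigorous in Appendix~\ref{sec:ap} via the supremum $\mu_1=\sup\{\mu\ge\mu_0 : N(m)\le N(\mu_0)+1 \text{ on } [\mu_0,\mu]\}$). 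Your claim that $N$ jumps by exactly $\pm 1$ at each crossing also needs care: the set $\{\mu : r(\mu)=r^*\}$ need not be discrete, and when $r(\mu)\equiv r^*$ on an interval the derivative comparison at $r^*$ must itself be tracked (the paper's Cases 3 and 4) to decide whether $N$ rises, falls, or is unchanged when $r(\mu)$ leaves $r^*$. You correctly flag this ``transversality bookkeeping'' as the technical heart, but the body of the proposal treats it as a detail rather than supplying the argument, so (i)' --- and with it (ii)', which you derive assuming the crossings alternate in direction --- is not actually established. To repair this, replace ``is then immediate'' with the contradiction scheme: assume $r(\mu)\le r^*$ eventually, use your parity classification together with the four local cases to show $N$ cannot persistently increase, and conclude $N$ is bounded, contradicting \eqref{eq:Ninf}; then run the parallel argument with $|v_r(\cdot,r^*)|$ versus $|(V^*)'(r^*)|$ to obtain (ii)'.
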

\begin{proof}  Put $s(\mu)=\min\{r(\mu),r^*\}$ for all $\mu>0$. We note that $v(\mu,\cdot)$ and $V^*$ satisfy the same equation
\begin{equation}\label{eq:vV}
-v''-\frac1r v'=f(v)\text{ in }(0,s(\mu)]
\end{equation}
for all $\mu>0$. We first claim that for each $\mu>0$, 
\begin{equation}\label{eq:Nfin}
N(\mu):=Z_{(0,s(\mu)]}[v(\mu,\cdot)-V^*]<\infty.
\end{equation}
If not, for some $\mu>0$, there exists a sequence $(r_n)\subset (0,s(\mu)]$ of distinct values and a number $r_0\in (0,s(\mu)]$ such that $v(\mu,r_n)=V^*(r_n)$ for all $n\in \mathbb{N}$ and $r_n\to r_0$ as $n\to \infty$. But this yields $v(\mu,r_0)=V^*(r_0)$ and $v_r(\mu,r_0)=(V^*)'(r_0)$. This is impossible in view of the uniqueness property of the initial value problem of \eqref{eq:vV}.  

Next, noting the uniqueness property of \eqref{eq:vV} again, we apply the implicit function theorem at each intersection point to show that  each intersection point continuously moves (or stays) and is isolated locally in $\mu>0$. Especially, any intersection point in $(0,s(\mu))$ does not vanishes and that in $(0,s(\mu)]$ does not  split into two or more intersection points locally in $\mu>0$. Then, noting also $\lim_{r\to 0^+}V^*(r)=\infty$, we know that $N(\mu)$ changes only when  $r(\mu)$ arrives at or leaves $r^*$ and at the moment, the change  occurs only by one. 

We note that  the observation above implies that there exist sequences $(\mu_n'),(\mu_n'')$ of positive values such that $\mu_n',\mu_n''\to \infty$ as $n\to \infty$ and $r(\mu_n')=r^*$ and $r(\mu_n'')\not=r^*$ for all $n\in \mathbb{N}$. In fact, if such a sequence $(\mu_n')$ does not exist, then there exists a value $\bar{\mu}>0$ such that $r(\mu)\not=r^*$ for all $\mu\ge \bar{\mu}$. From the continuity of $r(\mu)$, we have  $r(\mu)<r^*$ for all $\mu\ge \bar{\mu}$ or $r(\mu)>r^*$ for all $\mu\ge \bar{\mu}$. But then the above observation yields that $N(\mu)=N(\bar{\mu})<\infty$ for all $\mu\ge \bar{\mu}$ by \eqref{eq:Nfin}. This contradicts  \eqref{eq:Ninf}.  The existence of $(\mu_n'')$ can be shown similarly. 

Based on the observation above, we shall prove (i)' and (ii)', that is,  there exist sequences $(\mu_n^{\pm}),(\nu_n^{\pm})$ of positive values such that  $\mu_n^\pm,\nu_n^\pm\to \infty$ as $n\to \infty$ and  $r(\mu_n^-)<r^*<r(\mu_n^+)$, $r(\nu_n^\pm)=r^*$, $|v_r(\nu_n^-,r^*)|<|(V^*)'(r^*)|<|v_r(\nu_n^+,r^*)|$ for  all $n\in \mathbb{N}$. To this end, we consider the next $4$ cases. Set any number $\mu_0>0$. 
\vspace{0.1cm}\\\textbf{Case 1.} Assume $r(\mu_0)<r^*$. Then in view of the observation above, there exists a value $\mu_1>\mu_0$ such that $r(\mu)<r^*$ for all $\mu\in [\mu_0,\mu_1)$ and $r(\mu_1)=r^*$. Moreover, from the uniqueness property as above, we have $|v_r(\mu_1,r^*)|<|(V^*)'(r^*)|$ and $N(\mu_1)=N(\mu_0)+1$ or $|v_r(\mu_1,r^*)|>|(V^*)'(r^*)|$ and $N(\mu_1)=N(\mu_0)$.  
\vspace{0.1cm}\\ \textbf{Case 2.} Suppose $r(\mu_0)>r^*$. Then similarly, there exists a value $\mu_1>0$ such that $r(\mu)>r^*$ for all $\mu\in [\mu_0,\mu_1)$ and $r(\mu_1)=r^*$. Furthermore, we get $|v_r(\mu_1,r^*)|<|(V^*)'(r^*)|$ and $N(\mu_1)=N(\mu_0)$ or  $|v_r(\mu_1,r^*)|>|(V^*)'(r^*)|$ and $N(\mu_1)=N(\mu_0)+1$.
\vspace{0.1cm}\\ \textbf{Case 3.} Assume $r(\mu_0)=r^*$ and $|v_r(\mu_0,r^*)|<|(V^*)'(r^*)|$. Then,  noting the uniqueness property again, there exists a value $\mu_1\ge \mu_0$ such that $r(\mu)=r^*$, $|v_r(\mu,r^*)|<|(V^*)'(r^*)|$, and $N(\mu)=N(\mu_0)$ for all $r\in[\mu_0,\mu_1]$ and $r(\mu)$ leaves $r^*$ when $\mu$ increases from $\mu_1$. Especially, there exists a value $\mu_2>\mu_1$ such that $r(\mu_2)<r^*$ and $N(\mu_2)=N(\mu_0)-1$ or  $r(\mu_2)>r^*$ and $N(\mu_2)=N(\mu_0)$.\vspace{0.1cm}
\\ \textbf{Case 4.} Suppose $r(\mu_0)=r^*$ and $|v_r(\mu_0,r^*)|>|(V^*)'(r^*)|$. Then, analogously, there exists a value $\mu_1\ge \mu_0$ such that $r(\mu)=r^*$, $|v_r(\mu,r^*)|>|(V^*)'(r^*)|$, and $N(\mu)=N(\mu_0)$ for all $r\in[\mu_0,\mu_1]$ and $r(\mu)$ leaves $r^*$ when $\mu$ increases from $\mu_1$. In particular, there exists a value $\mu_2>\mu_1$ such that $r(\mu_2)<r^*$ and $N(\mu_2)=N(\mu_0)$ or  $r(\mu_2)>r^*$ and $N(\mu_2)=N(\mu_0)-1$.\vspace{0.1cm}

Now, assume the desired sequence $(\mu_n^+)$ as above does not exist. Then there exists a value $\mu_0>0$ such that $r(\mu)\le r^*$ for all $\mu\ge \mu_0$. From the observation above, we may assume $r(\mu_0)<r^*$. Then, $r(\mu)$ repeats the behaviors in Cases 1, 3, and 4 after $\mu$ increases from $\mu_0$. This yields that  $N(\mu)\le N(\mu_0)+1<\infty$ for all $\mu\ge \mu_0$ by \eqref{eq:Nfin}. This contradicts  \eqref{eq:Ninf}. Similarly, we can show the existence of the desired sequence $(\mu_n^-)$ by noting behaviors in Cases 2, 3, and 4. This completes (i)'.

Finally, suppose that the desired sequence $(\nu_n^+)$ above does not exist. Then there exists a value $\mu_0>0$ such that $\mu\ge \mu_0$ and $r(\mu)=r^*$ imply $|v_r(\mu,r^*)|<|(V^*)'(r^*)|$. From the previous assertion, we may suppose $r(\mu_0)<r^*$. Then since $r(\mu)$ repeats the behaviors in Cases 1,  2, and 3 after $\mu$ increases from $\mu_0$, we have that $N(\mu)\le N(\mu_0)+1$ for all $\mu\ge \mu_0$. This gives the similar contradiction as above. Similarly we prove the existence of the desired sequence $(\nu_n^+)$ by noting the behaviors in Cases 1, 2, and 4. This finishes (ii)'. We complete the proof. 
\end{proof}
\begin{remark}\label{rmk:bifa} In the proof of the existence of the sequences $(\mu_n^\pm)$ and $(\nu_n^\pm)$ above, we extended the interval of $\mu$ where $N(\mu)\le N(\mu_0)+1$ by noting the behaviors in Cases 1,2,3, and 4. But those local behaviors  in $\mu$ do not rigorously prove that the extension can be accomplished globally  in $\mu$ up to infinity. We did not give the corresponding discussion in order to emphasize the essential idea and avoid the longer and technical arguments.  For the reader's convenience, we  show a rigorous argument in Appendix \ref{sec:ap}.   
\end{remark}
These lemmas lead to a global picture of  our infinite concentration and oscillation phenomena on  blow-up solutions along the solutions curve. To see this, we first notice that an infinite number of  bumps are produced by the combination of the oscillation (i)' of the first zero point $r(\mu)$ and that (ii)'  of the first derivative $v_r(\mu,\cdot)$ at the zero point as the proof shows. For instance, recalling the behaviors in Cases 1,2,3, and 4 in the proof, we observe that beginning from Cases 1 to 3, then proceeding from 3 to 2,   then from 2 to 4, and finally coming back from 4 to 1, a pair $r^\pm$ of intersection points such that $0<r^-<r^+< \min\{r(\mu),r^*\}$,  $v(\mu,r^\pm)=V^*(r^\pm)$,  and $v(\mu,r)>V^*(r)$ for all $r\in(r^-,r^+)$ appears at some value $\mu>0$. Now, we assume that the oscillation has just produced the $k$-th bump at some $\mu=\mu_0$ where $k\in \mathbb{N}$ is some natural number. Then for all $\mu>\mu_0$, let $0<s_1^-(\mu)<s_1^+(\mu)<s_2^-(\mu)<s_2^+(\mu)<\cdots<s_k^-(\mu)<s_k^+(\mu)\le \min\{r(\mu),r^*\}$ be the first $2k$ intersection points on $(0,\min\{r(\mu),r^*\}]$ such that,  for every $i=1,\cdots,k$, $v(\mu,s_i^\pm(\mu))=V^*(s_i^\pm(\mu))$ and $v(\mu,r)>V^*(r)$ for all $r\in(s_i^-(\mu),s_i^+(\mu))$. In view of Lemma \ref{lem:bif1} and the implicit function theorem, without loss of the generality, we may suppose those points always exist for all  $\mu>\mu_0$ and are continuous functions of $\mu$. Then, we observe that the $k$-th bump climbs up the graphs of blow-up solutions and enters the inside of the first $k$ concentration regions when $\mu>0$ goes to infinity, that is, we have that 
\[
\liminf_{\mu\to \infty}\frac{v(\mu,s^+_k(\mu))}{\mu}\ge \delta_k
\] 
if $q\in(1,2)$ and 
\[
\limsup_{\mu\to \infty}(\mu-v(\mu,s^+_k(\mu)))\frac{g'(\mu)}{g(\mu)}\le \log{\frac1{\eta_k}}
\]
if $q=1$. Indeed, if this assertion does not hold, then there exist a value $\e>0$ and a sequence $(\mu_n)$ of positive values such that $\mu_n\to \infty$ as $n\to \infty$ and 
\[
\frac{v(\mu_n,s^+_k(\mu_n))}{\mu_n}\le \delta_k-\e 
\]
if $q\in(1,2)$ and 
\[
v(\mu_n,s_k^+(\mu_n))\le \mu_n -\left(\log{\frac1{\eta_k}}+\e \right)\frac{g(\mu_n)}{g'(\mu_n)} 
\]
if $q=1$ for all $n\in \mathbb{N}$. This implies that for any sequence $(s_n)$ such that $v(\mu_n,s_n)/\mu_n\to \delta_k$ if $q\in(1,2)$ and 
\[
v(\mu_n,s_n)=\mu_n-\left(\log{\frac1{\eta_k}}+o(1)\right)\frac{g(\mu_n)}{g'(\mu_n)}
\]
if $q=1$ as $n\to \infty$, we have that $s_n<s^+_k(\mu_n)$ for all large $n\in \mathbb{N}$ and thus, there are at most $2k-1$ intersection points in $(0,s_n]$ for all large $n\in \mathbb{N}$. On the other hand, the former assertion in Proposition \ref{cor:int2} shows that with the suitable choice of such a sequence $(s_n)$, there must be  at least $2k$ intersection points in $(0,s_n]$ up to a subsequence. This is a contradiction.  

The above observation gives that the boundary oscillations (i)' and (ii)' generate an infinite number of bumps one after another from  the boundary and each of them climbs up the graphs of blow-up solutions and finally enters the inside of the infinite concentration regions near the origin. Then, we can understand  that this infinite climbing-up behavior of the infinite sequence of bumps from the bottom to the top supplies the infinite sequence of bubbling parts near the origin detected via the suitable scaling procedure as in Theorem \ref{th:sup1}. 

Now we show the theorem. 
\begin{proof}[Proof of Theorem \ref{th:bif}] Put $(r(\mu),v(\mu,\cdot))=(\sqrt{\la(\mu)},u(\mu,\cdot/\sqrt{\la(\mu)}))$ for all $\mu>0$ and $(r^*,V^*)=(\sqrt{\la^*},U^*(\cdot/\sqrt{\la^*}))$. Then, from Lemma \ref{lem:bif1}, we have that 
\[
\lim_{\mu\to \infty}Z_{(0,\min\{r(\mu),r^*\})}[v(\mu,\cdot)-V^*]=\infty.
\] 
Consequently, from (i)' of Lemma \ref{lem:bif2} and the definitions of $r(\mu)$ and $r^*$, we get  (i). Moreover, since $|u_r(\nu_n^\pm,1)|=r^*|v_r(\nu_n^\pm,r^*)|$ and $|(U^*)'(1)|=r^*|(V^*)'(r^*)|$ for all $n\in \mathbb{N}$, we prove (ii) by (ii)'.  Finally, we show (iii) by contradiction. Assume that (iii) does not hold. Then there exists a sequence $(\mu_n)$ of  positive values such that $\mu_n\to \infty$ as $n\to \infty$ and 
\[
\sup_{n\in \mathbb{N}}Z_{(0,1)}[u(\mu_n,\cdot)-U^*]<\infty.
\]
But then,  since $\{(\la_n,\mu_n,u_n)\}=\{(\la(\mu_n),\mu_n,u(\mu_n,\cdot))\}$ is a sequence of solution of \eqref{pn}, we get the contradiction by Proposition \ref{cor:o2}. We finish the proof.  
\end{proof}
\begin{proof}[Proof of Theorem \ref{cor:bif}] From (i) of Theorem \ref{th:bif} and the continuity of $\la(\mu)$ there, we readily get the desired conclusion. We complete  the proof. 
\end{proof}
\subsection{Proof of Theorem \ref{th:bif2}}
We finally give the proof of Theorem \ref{th:bif2}. The next lemma is  a preliminary for the construction of solutions of \eqref{eq:sgv}.
\begin{lemma}\label{lem:cnstsg}
Assume that there exists a value $\bar{R}>0$ and a function $\bar{V}\in C^2((0,\bar{R}])$ such that
\[
-\bar{V}''-\frac1r \bar{V}'=f(V), \ \bar{V}>0\text{ in }(0,\bar{R}]
\]
and $\lim_{r\to 0^+}\bar{V}(r)=\infty$. Then there exists a solution $(R_0,V)=(R^*,V^*)$ of \eqref{eq:sgv} satisfying $V^*=\bar{V}$ on $(0,\bar{R}]$.
\end{lemma}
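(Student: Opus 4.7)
I would obtain $V^*$ by using the pair $(\bar V(\bar R),\bar V'(\bar R))$ as Cauchy data at $r=\bar R$ for the ODE $-V''-V'/r=f(V)$, extending $\bar V$ forward, and showing the extension reaches zero at some finite $R^*>\bar R$. Gluing this extension to $\bar V$ on $(0,\bar R]$ then produces the desired singular solution $V^*$.

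The first observation is that $\bar V'(\bar R)<0$. Indeed, $f>0$ on $(0,\infty)$ gives $(r\bar V'(r))'=-rf(\bar V(r))<0$ on $(0,\bar R]$, so $r\mapsto r\bar V'(r)$ is strictly decreasing. If $\bar R\bar V'(\bar R)\ge 0$ then $\bar V'>0$ throughout $(0,\bar R)$, making $\bar V$ strictly increasing and contradicting $\lim_{r\to 0^+}\bar V(r)=\infty>\bar V(\bar R)$.

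Next, standard existence theory applied to the first order system $V'=W$, $W'=-W/r-f(V)$ (locally Lipschitz on $\{r\ge \bar R,\ V\in\mathbb{R}\}$) supplies a unique maximal $C^2$ solution $V$ on some $[\bar R,R_{\max})$ matching $\bar V$ and $\bar V'$ at $\bar R$. Set $R^*=\sup\{R\in(\bar R,R_{\max}):V>0\text{ on }[\bar R,R)\}$. For $r\in[\bar R,R^*)$ the identity $(rV'(r))'=-rf(V(r))\le 0$ together with $\bar R\bar V'(\bar R)<0$ gives $V'(r)\le \bar R\bar V'(\bar R)/r$, hence by integration
\[
V(R)\le \bar V(\bar R)+\bar R\bar V'(\bar R)\log(R/\bar R)
\]
for every $R\in[\bar R,R^*)$. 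Since $\bar R\bar V'(\bar R)<0$, the right-hand side turns negative as soon as $\log(R/\bar R)>\bar V(\bar R)/|\bar R\bar V'(\bar R)|$, which forces $R^*<\infty$ and, by continuity, $V(R^*)=0$. The uniform bound $0<V\le\bar V(\bar R)$ on $[\bar R,R^*)$ and the explicit integral formula for $rV'$ also control $V'$ there, so $R^*<R_{\max}$. Defining $V^*=\bar V$ on $(0,\bar R]$ and $V^*=V$ on $[\bar R,R^*]$ then produces a function in $C^2((0,R^*])$ solving \eqref{eq:sgv}: the $C^2$ matching at $r=\bar R$ is automatic because the two branches agree in value and derivative there and both satisfy the same second order ODE.

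\textbf{Main obstacle.} The only substantive step is ruling out $R^*=\infty$, i.e., excluding the scenario that $V$ stays positive and decays toward some $V_\infty\in[0,\bar V(\bar R))$. The logarithmic estimate above disposes of this using only $f\ge 0$ and the strictly negative slope $\bar V'(\bar R)<0$ coming from the singular behavior at the origin; in particular no lower bound on $f$ (such as (H2)) is required for this extension step.
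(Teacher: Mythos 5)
Your proof is correct and follows essentially the same route as the paper: extend forward from the Cauchy data at $\bar R$, use $(rV')'=-rf(V)\le0$ to get the logarithmic upper bound, conclude $R^*<\infty$, and glue. The one small difference is that where the paper asserts ``without loss of generality $\bar V'(\bar R)<0$,'' you actually derive $\bar V'(\bar R)<0$ from the strict monotonicity of $r\mapsto r\bar V'(r)$ together with the blow-up of $\bar V$ at the origin, which is a cleaner and fully justified version of the same step.
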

\begin{proof} Without loosing the generality, we may assume $\bar{V}'(\bar{R})<0$.  We consider the next initial value problem
\begin{equation}\label{eq:vbar}
\begin{cases}
-V''-\frac1r V'=f(V)\text{ in } [\bar{R},\infty),\\
V(\bar{R})=\bar{V}(\bar{R}), \ V'(\bar{R})=\bar{V}'(\bar{R}).
\end{cases}
\end{equation}
Thanks to (H0), there exists a value $\e>0$ such that \eqref{eq:vbar} admits a solution $V$ on $[\bar{R},\bar{R}+\e]$. We set
\[
R^*:=\sup\{r>\bar{R}\ |\ \text{\eqref{eq:vbar} admits a solution $V>0$ on $[\bar{R},r]$}\}.
\]
We first claim  that $V$ is strictly decreasing on $[\bar{R},R^*)$.  In fact, integrating the equation in \eqref{eq:vbar}, we get for all $r\in [\bar{R},R^*)$ that
\[
-rV'(r)=-\bar{R}V'(\bar{R})+\int_{\bar{R}}^{r}f(V(s))sds\ge -\bar{R}V'(\bar{R})>0
\]
from the nonnegativity of $f$ by (H0) and our assumption. This prove the claim. We next claim $R^*<\infty$. If not,  integrating the previous formula again, we have that for all $r>\bar{R}$, 
\[
0>-V(r)=-V(\bar{R})-RV'(R)\log{\frac{r}{\bar{R}}}\to \infty
\]
as $r\to \infty$. This is a contradiction.  Note that from the monotonicity, we have $\lim_{r\to (R^*)^-}V(r)=0$. Then, putting $V^*=\bar{V}$ on $(0,\bar{R})$ and $V^*=V$ on $[\bar{R},R^*)$ with $V^*(R^*)=0$, we complete the desired conclusion. We finish the proof.    
\end{proof}
We next show that (H3) implies the assumptions in \cite{FIRT} which give the existence of the desired singular solutions. To this end, we introduce their setting noted in Section 2 there. We define, for all $t\ge t_0$,
\[
F(t):=\int_t^\infty\frac1{f(s)}ds
\]
and then, 
\[
\frac1{B_1[f](t)}:=(-\log{F(t)})[1-f'(t)F(t)]
\]
and 
\[
\frac1{B_2[f](t)}:=\frac{\left(1-f'(t)F(t)\right)'}{\left(\frac1{-\log{F(t)}}\right)'}=f'(t)F(t)\left(-\log{F(t)}\right)^2\left[\frac{f(t)f''(t)F(t)}{f'(t)}-1\right].
\]
(We here use the same character $F$ for the different function from that in Subsections 2.1 and 2.2 in the former part \cite{N3} for the convenience.) Then in our notation, their first two assumptions can be written as follows.
\begin{enumerate}
\item[($f_1$)] $f\in C^2((t_0,\infty))$, $f(t)>0$, $F(t)<\infty$, and $f'(t)>0$ for all $t> t_0$. 
\item[($f_2$)] The limits $\lim_{t\to \infty}f'(t)F(t)$ and $B_2[f](t)$ exist and satisfy
\[
\lim_{t\to \infty}f'(t)F(t)=1
\] 
and
\[
B:=\lim_{t\to \infty}B_2[f](t)\in [1,\infty).
\]
\end{enumerate}
Moreover, setting a value $B'>1$ so that $1/B+1/B'=1$ if $B>1$, we put  functions
\[
f_0(t)=\begin{cases}\displaystyle\frac{4}{BB'}t^{1-2B'}e^{t^{B'}} &\text{ if }B>1,\vspace{0.2cm}\\
\displaystyle 4\frac{e^{e^{t}}}{e^{2t}} &\text{ if }B=1,
\end{cases}
\] 
and for each case of $B\ge1$,  
\[
F_0(t):=\int_t^\infty\frac1{f_0(s)}ds
\]
for all $t>0$. Moreover, we put $u_0$ as an explicit solution of the equation $-\Delta u=f_0(u)$ given by 
\[
u_0(r)=\begin{cases}\displaystyle\left(\log{\frac1{r^2}}\right)^{\frac1{B'}}&\text{ if }B>1,\vspace{0.2cm}\\
\displaystyle \log{\left(\log{\frac1{r^2}}\right)} &\text{ if }B=1,
\end{cases}
\]
for all $r\in(0,1)$.  As noted in (2.5) there, we  get for any $B\ge1$,
\[
F_0(u_0(r))=\frac{B}{4}r^2\left(\log{\frac{1}{r^2}}+1\right)=:w(r),
\]
and then define
\[
\tilde{u}(r):=F^{-1}[F_0(u_0(r))]
\]
for all $r>0$ in an appropriate region near the origin where $F^{-1}$ is the inverse function of $F$. Finally setting
\[
R_i(r):=\left|\frac1{B_i[f](\tilde{u}(r))}-\frac1{B_i[f_0](u_0(r))}\right|
\]
for $i=1,2$, we write their third condition as
\begin{enumerate}
\item[($f_3$)] $\lim_{r\to 0^+}(-\log{r})^{1/2}[R_1(r)+R_2(r)]=0$. 
\end{enumerate} 
Let us show that (H3) implies ($f_i$) for all $i=1,2,3$. We first give a note on \eqref{g3}. 
\begin{lemma}\label{lem:hatq} Assume  (H3). Then we get that $\hat{q}=(2-q)^{-1}$.
\end{lemma}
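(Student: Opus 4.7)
The plan is to derive a first-order differential identity linking the ratios
\[
\psi(t) := \frac{g'(t)^2}{g(t) g''(t)} \quad \text{and} \quad \hat{\psi}(t) := \frac{g''(t)^2}{g'(t) g'''(t)},
\]
and then to use boundedness of $\psi$ near infinity to pin down the algebraic relation between their limits $q$ and $\hat{q}$. Logarithmic differentiation yields
\[
\frac{\psi'}{\psi} = 2\frac{g''}{g'} - \frac{g'}{g} - \frac{g'''}{g''}.
\]
From the very definitions, $g'/g = \psi \cdot g''/g'$ and $g'''/g'' = (g''/g')/\hat{\psi}$, so the identity simplifies to the key form
\[
\frac{\psi'(t)}{\psi(t)} = \frac{g''(t)}{g'(t)} \left[ 2 - \psi(t) - \frac{1}{\hat{\psi}(t)} \right].
\]

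I would then argue by contradiction. Suppose first that $\hat{q} \in (0,\infty)$ and set $c := 2 - q - 1/\hat{q}$, aiming to show $c = 0$. Assume $c \neq 0$, and fix $\varepsilon \in (0,|c|)$. Since $\psi(t) \to q$ and $\hat{\psi}(t) \to \hat{q}$, for all $t$ beyond some $T$ the bracket in the identity differs from $c$ by less than $\varepsilon$. Integrating from $T$ to $t > T$, and using $\int_T^t (g''/g')\,ds = \log g'(t) - \log g'(T)$, gives
\[
\left| \log\frac{\psi(t)}{\psi(T)} \right| \ge (|c| - \varepsilon) \log\frac{g'(t)}{g'(T)}.
\]
Since $g'(t) \to \infty$ by Lemma \ref{lem:monog}, the right-hand side tends to $+\infty$, while the left-hand side is bounded because $\psi(t) \to q \in [1,2)$. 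This contradiction forces $c = 0$, i.e., $\hat{q} = (2-q)^{-1}$.

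The boundary values of $\hat{q}$ can be ruled out analogously. If $\hat{q} = +\infty$, then $1/\hat{\psi}(t) \to 0$ and the bracket converges to $2 - q > 0$; the same integration forces $\psi(t) \to \infty$, contradicting $\psi(t) \to q < \infty$. If $\hat{q} = 0$, then $1/\hat{\psi}(t) \to +\infty$ and for each $M > 0$ the bracket is eventually bounded above by $-M$; integration forces $\psi(t) \to 0$, contradicting $\psi(t) \to q \ge 1$. Thus $\hat{q}$ is finite and necessarily equal to $(2-q)^{-1}$.

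The only (modest) technical point is the passage from pointwise convergence of the bracket to the uniform bound used inside the integral, which is handled by choosing $T$ large enough that $|\psi(t) - q|$ and $|1/\hat{\psi}(t) - 1/\hat{q}|$ are both smaller than $\varepsilon/2$ for $t \ge T$. I note that only the bare existence of the limits $q$ and $\hat{q}$ from (H1) and (H3), together with the $C^3$-regularity of $g$, is used; the quantitative decay estimates \eqref{g5}, \eqref{g7}, \eqref{g6} are not needed for this algebraic identification.
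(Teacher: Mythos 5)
Your proof is correct, but it follows a genuinely different route from the paper's. The paper's argument is a one-line application of de l'H\^{o}pital's rule: since $g'(t)^2\to\infty$ (Lemma~\ref{lem:monog}) and $(g'^2)'=2g'g''>0$, one computes
\[
\frac{(g g'')'}{(g'^2)'}=\frac{g'g''+gg'''}{2g'g''}=\frac12+\frac{gg''}{2g'^2}\cdot\frac{g'g'''}{g''^2}\longrightarrow \frac12+\frac1{2q\hat{q}},
\]
so de l'H\^{o}pital forces $1/q=\tfrac12+\tfrac{1}{2q\hat q}$, i.e.\ $\hat q=(2-q)^{-1}$. Your approach instead takes the logarithmic derivative of $\psi=g'^2/(gg'')$, rewrites it as $\psi'/\psi=(g''/g')\bigl[2-\psi-1/\hat\psi\bigr]$, and integrates: since $\int_T^t(g''/g')\,ds=\log g'(t)-\log g'(T)\to\infty$ while $\psi$ stays bounded, the bracket must have limit zero. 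The two proofs are in fact dual: de l'H\^{o}pital's rule for the $\infty/\infty$ form is itself proved by essentially the integration estimate you carry out. What your version buys is transparency at the boundary values $\hat q\in\{0,\infty\}$: you rule them out explicitly, whereas the paper leaves that implicit (if $\hat q=0$ the ratio of derivatives tends to $\infty$, forcing $1/q=\infty$, absurd; if $\hat q=\infty$ it tends to $1/2$, forcing $q=2\notin[1,2)$). What the paper's version buys is brevity. Your closing remark that only the existence of the limits \eqref{g1}, \eqref{g3} (and not the refined decay rates \eqref{g5}--\eqref{g6}) is needed holds equally for the paper's proof, so it is not a distinguishing feature, but it is a correct observation about the lemma itself.
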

\begin{proof} From \eqref{g1} and the de l'H\^{o}spital rule, we have that
\[
\frac1q=\lim_{t\to \infty}\frac{(g(t)g''(t))'}{(g'(t)^2)'}=\lim_{t\to \infty}\left(\frac12+\frac{g(t)g''(t)}{2g'(t)^2}\frac{g'(t)g'''(t)}{g''(t)^2}\right).
\]
This gives the desired conclusion by \eqref{g1} and \eqref{g3}. We complete the proof. 
\end{proof}
Then we prove  the next desired lemma. 
\begin{lemma}\label{lem:bif3} (H3) implies ($f_1$), ($f_2$), and ($f_3$). Moreover, there exists a solution $(R_0,V)=(R^*,V^*)$ of \eqref{eq:sgv} such that  
\[
g(V^*(r))+\log{g(V^*(r))}=\log{\frac1{w(r)}}+o(1),
\]
as $r\to 0^+$. 
\end{lemma}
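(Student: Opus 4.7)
The plan splits the lemma into two blocks. The first verifies $(f_1)$, $(f_2)$, and $(f_3)$ from (H3). The second invokes the singular solution construction of \cite{FIRT} on a small interval $(0,\bar R]$ and extends the local singular solution to a global one via Lemma \ref{lem:cnstsg}. Throughout, the basic analytic tool is the asymptotic expansion
\[
F(t)=\frac{e^{-g(t)}}{g'(t)}\left(1-\frac{g''(t)}{g'(t)^2}+O\Bigl(\frac{g''(t)^2}{g'(t)^4}\Bigr)\right)\quad\text{as }t\to\infty,
\]
obtained by integrating $\int_t^\infty e^{-g(s)}\,ds$ by parts twice; the remainder is controlled by (H1) together with \eqref{g3}, and the expansion is legitimate because Lemma \ref{lem:monog} gives $g'(t)\to\infty$.

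From this expansion, $(f_1)$ is routine: $C^2$-regularity, positivity, and monotonicity of $f$ and $f'=fg'$ follow from (H0) and (H1)(i), while $F(t)<\infty$ holds since $e^{-g(s)}$ decays super-polynomially. The first assertion of $(f_2)$ is immediate from the expansion: $f'(t)F(t)=g'(t)f(t)F(t)=1-g''(t)/g'(t)^2+o(g''/g'^2)\to 1$. For the limit $B$, I would differentiate, using $f''=f(g''+(g')^2)$, to get $(1-f'F)'=(g''/g'^2)'+\text{lower order}=g'''/g'^2-2(g'')^2/g'^3+\cdots$, combine this with $(1/(-\log F))'=-((-\log F)^2 fF)^{-1}\sim -g'/g^2$ (since $-\log F\sim g$ and $fF\sim 1/g'$), and divide. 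Using \eqref{g1}, \eqref{g3}, and Lemma \ref{lem:hatq} (so that $\hat q=1/(2-q)$), the resulting expression reduces to
\[
\frac1{B}=\lim_{t\to\infty}\frac1{B_2[f](t)}=\frac{2\hat q-1}{q^2\hat q}=\frac{1}{q},
\]
so $B=q\in[1,2)\subset[1,\infty)$, as required.

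The verification of $(f_3)$ is the main technical obstacle: the rate $(-\log r)^{1/2}[R_1(r)+R_2(r)]\to0$ has to be extracted from the refined hypotheses \eqref{g5}, \eqref{g7}, and \eqref{g6}. Passing to the variable $t=\tilde u(r)=F^{-1}(w(r))$ and noting $g(t)\sim -\log w(r)\sim -\log r$ via the expansion above, the target bound becomes $\sqrt{g(t)}\,|1/B_i[f](t)-1/B_i[f_0](t)|\to0$ for $i=1,2$. Expanding $1/B_i[f]$ in powers of $g',g'',g''',g''''$ by the computation just sketched, and matching them with the explicit values of $1/B_i[f_0]$ (computed directly from the explicit form of $f_0$ in the two cases $B>1$ and $B=1$), the difference is controlled by a constant multiple of the three ratios $q-g'^2/(gg'')$, $\hat q-g''^2/(g'g''')$, and $g''''/(g'')^2$, each evaluated at $t=\tilde u(r)$. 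Conditions \eqref{g5}, \eqref{g7}, and \eqref{g6} supply exactly the $\sqrt{g(t)}$-decay needed so that the product goes to $0$. This matching is the heart of the proof and the sole reason the refinements on top of (H1) are imposed in (H3).

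With $(f_1)$--$(f_3)$ verified, together with the remaining conditions on $g$ from Lemma 4.1 of \cite{FIRT} built into (H3), the construction of that paper yields a function $\bar V\in C^2((0,\bar R])$ solving the singular equation on $(0,\bar R]$ with $\bar V>0$, $\lim_{r\to 0^+}\bar V(r)=\infty$, and sharp asymptotic $F(\bar V(r))=w(r)(1+o(1))$ as $r\to 0^+$. Lemma \ref{lem:cnstsg} then extends $\bar V$ to a global singular solution $(R^*,V^*)$ of \eqref{eq:sgv} coinciding with $\bar V$ near $0$. Taking logarithms and invoking the already-established $-\log F(t)=g(t)+\log g'(t)+o(1)$ yields the asymptotic formula displayed in the lemma, completing the plan. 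As emphasized, the real work lies in the quantitative $\sqrt{g}$-matching required for $(f_3)$, where (H3) is used in its full strength.
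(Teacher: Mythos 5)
Your proposal is correct and follows essentially the same structure as the paper's proof: verify $(f_1)$, $(f_2)$, $(f_3)$ from (H3), then invoke Theorem 2.1 of \cite{FIRT} together with Lemma \ref{lem:cnstsg} and the expansion $-\log F(t) = g(t) + \log g'(t) + o(1)$ to obtain the global singular solution with the stated asymptotics. Two minor stylistic differences: you re-derive the asymptotic expansion of $F(t)$ directly via repeated integration by parts, whereas the paper simply cites the ready-made expansions of $\log F$, $fF$, $f'F$, and $ff''F/f'$ from Lemma 4.1 of \cite{FIRT}; and you compute $1/B_2[f]$ from the definition as a quotient of derivatives, while the paper uses the second, algebraically equivalent form $f'F(-\log F)^2[ff''F/f'-1]$. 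Both give $B=q$ via Lemma \ref{lem:hatq}. One detail to flesh out in your $(f_3)$ sketch: besides the three ratios controlled by \eqref{g5}, \eqref{g7}, \eqref{g6}, you must also bound the intrinsic error $O(\xi_q(u_0(r)))$ in $1/B_i[f_0]$ coming from the explicit reference function $f_0$ (the paper shows this is $O(\log\log(1/r)/\log(1/r))$, which kills the $\sqrt{-\log r}$ weight); you allude to computing $1/B_i[f_0]$ directly but a full proof must explicitly verify this term vanishes at the required rate. Finally, note the displayed formula in the lemma as printed has $\log g(V^*(r))$ where it should read $\log g'(V^*(r))$ (consistent with condition (C) and with your own $-\log F(t) = g(t)+\log g'(t)+o(1)$); your argument actually establishes the corrected form.
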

\begin{proof} From \eqref{g2}, there exists a value $c_0>0$ such that $(\log{g(t)})'\ge c_0(\log{t})'$ for all large $t\ge t_0$. This implies that $g(t)\ge c_1t^{c_0}$ for all large $t\ge t_0$ and some $c_1>0$. This gives $F(t)<\infty$ for all $t\ge t_0$. Hence, noting  (H0) and (H1),  we get ($f_1$). Moreover, thanks to the final condition in (H3), we can apply Lemma 4.1 in \cite{FIRT}. Consequently, we get that
\begin{align}
&\log{F(t)}=-g(t)-\log{g'(t)}+O\left(\frac{g''(t)}{g'(t)^2}\right),\label{eq:lgF}\\
&f(t)F(t)=\frac1{g'(t)}(1+o(1)),\label{eq:fF}\\
&f'(t)F(t)=1-\frac{g''(t)}{g'(t)^2}+\frac{3g''(t)^2-g'(t)g'''(t)}{g'(t)^4}(1+o(1)),\notag\\
&\frac{f(t)f''(t)F(t)}{f'(t)}=1+\frac{2g''(t)^2-g'(t)g'''(t)}{g'(t)^4}\notag\notag\\
&\ \ \ \ \ \ \ \ \ \ \ \ \ \ \ \ \ \ \ \ \ \ \ \ +O\left(\left|\frac{g''(t)g'''(t)}{g'(t)^5}\right|+\left|\frac{g''''(t)}{g'(t)^4}\right|+\left|\frac{g''(t)^3}{g'(t)^6}\right|\right),\notag
\end{align}
as $t\to \infty$. 
 It follows that
\[
f'(t)F(t)=1-\frac{g''(t)}{g'(t)^2}\left(1-\frac{3g''(t)}{g'(t)^2}+\frac{g''(t)}{g'(t)^2}\frac{g'(t)g'''(t)}{g''(t)^2}(1+o(1))\right)\to 1
\]
as $t\to \infty$ by \eqref{g1} and \eqref{g3} with Lemma \ref{lem:hatq}. Similarly, using \eqref{g1},  \eqref{g3} with Lemma \ref{lem:hatq},  \eqref{g4}, and \eqref{eq:lgg}, we compute
\begin{align}
&\frac1{B_2[f](t)}\notag\\
&=\frac{2g(t)^2g''(t)^2}{g'(t)^4}\left(1+O\left(\frac{g''(t)}{g'(t)^2}\right)\right)\left(1+\frac{\log{g'(t)}}{g(t)}+O\left(\frac{g''(t)}{g(t)g'(t)^2}\right)\right)^2\notag\\
&\ \ \ \ \ \times\left(1-\frac{g'(t)g'''(t)}{2g''(t)^2}+O\left(\left|\frac{g'''(t)}{g'(t)g''(t)}\right|+\left|\frac{g''''(t)}{g''(t)^2}\right|+\left|\frac{g''(t)}{g'(t)^2}\right|\right)\right)\label{eq:b2ft}\\
&\to \frac1q\notag
\end{align}
as $t\to \infty$. This proves $(f_2)$. Remark that this also shows $B=q$ and $B'=p$ if $B>1$ by Lemma \ref{lem:pq}. 

Let us finally show ($f_3$). We first note that
\[
\begin{split}
\frac1{B_1[f](t)}&=\frac{g(t)g''(t)}{g'(t)^2}\left(1+\frac{\log{g'(t)}}{g(t)}+O\left(\frac{g''(t)}{g(t)g'(t)^2}\right)\right)\\
&\ \ \ \ \ \ \ \ \ \ \ \ \ \ \ \ \ \ \times \left(1-\frac{3g''(t)}{g'(t)^2}+\frac{g''(t)}{g'(t)^2}\frac{g'(t)g'''(t)}{g''(t)^2}(1+o(1))\right)\\
&= \frac 1q+o\left(\frac1{\sqrt{g(t)}}\right)
\end{split}
\]
by \eqref{g5} and \eqref{g3} with Lemma \ref{lem:hatq} and from \eqref{eq:b2ft} with \eqref{g5}, \eqref{g7}, and \eqref{g6}, 
\[
\frac1{B_2[f](t)}=\frac1q+o\left(\frac1{\sqrt{g(t)}}\right)
\]
as $t\to \infty$. Moreover, from (4.14) and the formula on p43 in \cite{FIRT}, we know that
\[
\frac{1}{B_i[f_0](t)}=
\displaystyle1-\frac1p+O\left(\xi_q(t)\right)
\]
as $t\to \infty$ for each $i=1,2$  where $\xi_q(t)=t^{-p}\log{t}$ if $q>1$ and  $\xi_q(t)=te^{-t}$ and $1/p=0$ if $q=1$.  In addition, since $\tilde{u}(r)=F^{-1}(w(r))$ from the definition, we have that $\tilde{u}(r)\to \infty$ as $r\to 0^+$ by the monotonicity of $F$ and the fact that $F(t)\to0$ as $t\to \infty$. Consequently, we get that 
\begin{equation}\label{eq:Ri}
\begin{split}
R_i(r)&=
\displaystyle o\left(\frac1{\sqrt{g(\tilde{u}(r))}}\right)+O\left(\xi_q\left(u_0(r)\right)\right)
\end{split}
\end{equation}
as $r\to 0^+$ for each $i=1,2$. Finally, we notice from the definition of $\tilde{u}$ and $w$ that  
\[
(2+o(1))\log{\frac1r}=\log{\frac 1{w(r)}}=
\displaystyle \log{\frac1{F(\tilde{u}(r))}}=(1+o(1))g(\tilde{u}(r))
\]
by \eqref{eq:lgF}, \eqref{eq:lgg}, and \eqref{g1} and 
\[
\xi_q\left(u_0(r)\right)=O\left(\frac{\log{\left(\log{\frac1r}\right)}}{\log{\frac1r}}\right)
\]
as $r\to 0^+$. Therefore, we deduce that 
\[
\lim_{r\to 0^+}\left(\log{\frac1r}\right)^{\frac12}[R_1(r)+R_2(r)]=0
\]
which confirms ($f_3$). 

As a result, from Theorem 2.1 in \cite{FIRT} with Lemma \ref{lem:cnstsg} above, there exists  a solution  $(R_0,V)=(R^*,V^*)$ of \eqref{eq:sgv} such that 
\[
V^*(r)=\tilde{u}(r)+o\left(\frac1{g'(\tilde{u}(r))}\right)
\]
as $r\to 0^+$ by \eqref{eq:fF} and \eqref{eq:Ri}. Here, noting  Lemma \ref{lem:g2}, we have that
\[
g(V^*(r))=g(\tilde{u}(r))+o\left(\frac{g'\left(\tilde{u}(r)+o\left(\frac1{g'(\tilde{u}(r))}\right)\right)}{g'(\tilde{u}(r))}\right)=g(\tilde{u}(r))+o(1)
\]
and, for some function $y(r)=\tilde{u}(r)+o(1/g'(\tilde{u}(r)))$, that
\[
\begin{split}
\log{g'(V^*(r))}&=\log{g'(\tilde{u}(r))}+o\left(\frac1{g'(\tilde{u}(r))}\frac{g''\left(y(r)\right)}{g'\left(y(r)\right)}\right)\\
&=\log{g'(\tilde{u}(r))}+o(1)
\end{split}
\]
as $r\to 0^+$ where we used also \eqref{g1} for the last equality. Therefore, by \eqref{eq:lgF}, we deduce
\[
\begin{split}
g(V^*(r))+\log{g'(V^*(r))}&=g(\tilde{u}(r))+\log{g'(\tilde{u}(r))}+o(1)
=\log{\frac1{w(r)}}+o(1)
\end{split}
\]
as $r\to 0^+$. This completes the proof.   
\end{proof}
Finally, we prove the  theorem and its corollary. 
\begin{proof}[Proof of Theorem \ref{th:bif2}]Take a solution $(R^*,V^*)$ of \eqref{eq:sgv} from Lemma \ref{lem:bif3}. Set $(\la^*,U^*)=((R^*)^2,V^*(R^*\cdot))\in(0,\infty)\times C^2((0,1])$. Then, $(\la,U)=(\la^*,U^*)$ is a solution of \eqref{eq:sg} such that $U^*$ satisfies the condition (C). As a consequence, Theorem \ref{th:bif}  gives the desired conclusion.  We finish the proof. 
\end{proof}
\begin{proof}[Proof of Corollary \ref{cor:bif3}]
The proof follows from the assertion (i) of the theorem with the continuity of $\la(\mu)$. We finish the proof. 
\end{proof}
\begin{remark}\label{rmk:41}
To see that the examples (a) and (b) under Corollary \ref{cor:bif3} satisfy all the assumptions in Lemma 4.1 in \cite{FIRT}, for any $C^5$ function $g(t)$ defined for all $t\ge t_0$, we set $g_1(t)=1/g'(t)$, $g_{i+1}(t)=g_i'(t)/g'(t)$ for all $i=1,2,3,4$ and all $t\ge t_0$. Then the direct computation shows that 
\[
\begin{split}
&g_1(t)=\frac1{g'(t)},\ g_2(t)=-\frac{g''(t)}{g'(t)^3},\ g_3(t)=\frac{-g'(t)g'''(t)+3g''(t)^2}{g'(t)^5},\\
&g_4(t)=\frac{-g'(t)^2g''''(t)+10g'(t)g''(t)g'''(t)-15g''(t)^3}{g'(t)^7},\\
&g_5(t)=\frac{-g'(t)^3g'''''(t)+10g'(t)^2g'''(t)^2+15g'(t)^2g''(t)g''''(t)}{g'(t)^9}\\
&\ \ \ \ \ \ \ \ \ \ +\frac{-105 g'(t)g''(t)^2g'''(t)+105g''(t)^4}{g'(t)^9}
\end{split}
\]
for all $t\ge t_0$. Then, for the examples (a) and (b), putting $g(t)=t^p+ct^{\bar{p}}+m\log{t}$ and $g(t)=e^t+w(t)$, we get $g_i(t)=c_it^{1-ip}(1+o(1))$ and $g_i(t)=c_i e^{-it}(1+o(1))$ as $t\to \infty$ for some constant $c_i\not=0$ and all $i=1,2,3,4,5$ respectively. Then one easily checks the desired conclusion.
\end{remark}
\appendix
\section{Note on the proof of Lemma \ref{lem:bif2}}\label{sec:ap}
In this appendix, we give a rigorous argument for the proof of the existence of the desired sequences in (i)' and (ii)' of Lemma \ref{lem:bif2} as noted in Remark \ref{rmk:bifa}. Noting the basic observation in the first three paragraphs and the behaviors in Cases 3 and 4 in the proof of the lemma, we argue as follows. 

Let us first show the existence of the desired sequence $(\mu_n^+)$. Assume it does not exist. Then there exists a value $\mu_0>0$ such that $r(\mu)\le r^*$ for all $\mu\ge \mu_0$. From the basic observation, we can assume $r(\mu_0)<r^*$. 
 Set $\mu_1=\sup\{\mu\ge \mu_0\ |\ N(m)\le N(\mu_0)+1\text{ for all }m\in[\mu_0,\mu]\}$. The continuity of $r(\mu)$ and \eqref{eq:Ninf} yield $\mu_0<\mu_1<\infty$. From the continuity again and the basic observation with the behaviors in Cases 3 and 4, we get $r(\mu_1)=r^*$ and $N(\mu_1)=N(\mu_0)+2$. First assume $|v_r(\mu_1,r^*)|>|(V^*)'(r^*)|$. Then from the assumption, as $\mu$ decreases from $\mu_1$, $r(\mu)$ stays at $r^*$ or moves to the left. 
In both cases, $N(\mu)$ does not change. This contradicts the definition of $\mu_1$. Hence, we have $|v_r(\mu_1,r^*)|<|(V^*)'(r^*)|$. Then when $\mu$ decreases from $\mu_1$, $r(\mu)$ must move to the left in order that $N(\mu)$ decreases by one. In particular, there exists a value $\mu_0<\mu_2<\mu_1$ such that $r(\mu_2)<r^*$ and $N(\mu_2)=N(\mu_0)+1$. Put $\mu_3:=\inf\{\mu\le \mu_2\ |\ N(m)=N(\mu_0)+1\text{ for all }m\in[\mu,\mu_2]\}$. The continuity of $r(\mu)$ yields $\mu_3\in(\mu_0,\mu_2)$. Then from the continuity again and the behaviors in Cases 3 and 4, we have that $r(\mu_3)=r^*$ and $N(\mu_3)=N(\mu_0)+1$.  Assume $|v_r(\mu_3,r^*)|<|(V^*)'(r^*)|$. Then since $r(\mu_2)<r^*$, 
 there exists a value $\mu_4\in [\mu_3,\mu_2)$ such that $r(\mu)=r^*$, $N(\mu)=N(\mu_0)+1$, and $|v_r(\mu,r^*)|<|(V^*)'(r^*)|$ for all $\mu\in[\mu_3,\mu_4]$ and when $\mu$ increases from $\mu_4$, $r(\mu)$ leaves $r^*$ to the left and at the moment, $N(\mu)$ decreases to $N(\mu_0)$. This is a contradiction since $\mu_4\in[\mu_3,\mu_2)$. Hence we have  $|v_r(\mu_3,r^*)|>|(V^*)'(r^*)|$. But then, when $\mu$ decreases from $\mu_3$, $r(\mu)$ stays at $r^*$ or moves to the left. In both cases, $N(\mu)$ does not change which contradicts the definition of $\mu_3$. Hence we prove the existence of the desired sequence $(\mu_n^+)$. The proof of the existence of the sequence $(\mu_n^-)$ can be similarly done. 

Next we show the existence of the sequence $(\nu_n^+)$. Assume the desired sequence $(\nu_n^+)$ does not exist. Then  there exists a value $\mu_0>0$ such that $\mu\ge \mu_0$ and $r(\mu)=r^*$ imply  $|v_r(\mu,r^*)|<|(V^*)'(r^*)|$. Noting the previous assertions, we can assume $r(\mu_0)<r^*$. Again put $\mu_1=\sup\{\mu\ge \mu_0\ |\ N(m)\le N(\mu_0)+1\text{ for all }m\in[\mu_0,\mu]\}$. The continuity of $r(\mu)$ and \eqref{eq:Ninf} ensure $\mu_1\in(\mu_0,\infty)$. By the continuity again and the assumption, we get $r(\mu_1)=r^*$ and  $|v_r(\mu_1,r^*)|<|(V^*)'(r^*)|$. Moreover, the behavior in Case 3 and the definition of $\mu_1$ yield $N(\mu_1)=N(\mu_0)+2$. Then, again from the definition of $\mu_1$,  when $\mu$ decreases from $\mu_1$, $r(\mu)$ must leave $r^*$ to the left in order that $N(\mu)$ decreases by one. In particular, there exists a value $\mu_2\in(\mu_0,\mu_1)$ such that $r(\mu_2)<r^*$ and $N(\mu_2)=N(\mu_0)+1$.   Put $\mu_3=\inf\{\mu \le \mu_2\ |\ N(m)=N(\mu_0)+1\text{ for all }m\in[\mu,\mu_2]\}$. From the continuity of $r(\mu)$, we have $\mu_3\in(\mu_0,\mu_2)$. It follows from the continuity again and the behavior in Case 3 that  $r(\mu_3)=r^*$, $|v_r(\mu_3,r^*)|<|(V^*)'(r^*)|$, and $N(\mu)=N(\mu_0)+1$. Then 
 since $r(\mu_2)<r^*$, there exists a value $\mu_4\in[\mu_3,\mu_2)$ such that $r(\mu)\ge r^*$ and $N(\mu)=N(\mu_0)+1$ for  all $\mu\in[\mu_3,\mu_4]$, $r(\mu_4)=r^*$, $|v_r(\mu_4,r^*)|<|(V^*)'(r^*)|$, and when $\mu$ increases from $\mu_4$, $r(\mu)$ leaves $r^*$ to the left which implies that $N(\mu)$ decrease by one. This is  a contradiction since   $\mu_3\le \mu_4<\mu_2$. This proves the existence of the sequence $(\nu_n^+)$. We can similarly prove the existence of  $(\nu_n^-)$. We complete the proof.  
\subsection*{Acknowledgement} The author is grateful to Elide Terraneo for her helpful suggestion concerning Theorem \ref{th:bif2}. This work is supported by JSPS KAKENHI Grant Numbers 21K13813. 

\end{document}